\numberwithin{equation}{section}
\newtheorem{thm}{Theorem}[section]
\newtheorem{cor}[thm]{Corollary}
\newtheorem{lem}[thm]{Lemma}
\newtheorem{prop}[thm]{Proposition}
\newtheorem{defin}[thm]{Definition}
\newtheorem{remark}[thm]{Remark}
\def\enne{\mathbb{N}}
\def\erre{\mathbb{R}}
\def\P{\mathbb{P}}
\def\E{\mathop{{}\mathbb{E}}}
\def\cL{\mathscr{L}}
\def\cF{\mathscr{F}}
\def\cB{\mathscr{B}}
\def\eps{\varepsilon}
\def\OO{\mathcal{O}}
\renewcommand{\d}{{\mathrm d}}
\def\beq{\begin{equation}}
\def\eeq{\end{equation}}
\def\to{\rightarrow}
\def\embed{\hookrightarrow}
\def\norm #1{\left\|#1\right\|}
\def\sp #1#2{\left<#1,#2\right>}
\newcommand\ip\sp
\definecolor{colorLink}{RGB}{0,100,162}
\definecolor{colorCite}{RGB}{8,124,100}
\setlist[itemize]{leftmargin=5mm}
\setlist[enumerate]{leftmargin=10mm}
\begin{document}

\title{Weak uniqueness by noise for singular stochastic PDEs}

\author{Federico Bertacco$^1$, Carlo Orrieri$^{2}$, Luca Scarpa$^{3}$}

\institute{Imperial College London, Email: \href{mailto:f.bertacco20@imperial.ac.uk}{\color{black} f.bertacco20@imperial.ac.uk} \and Universit\`a di Pavia, Email: \href{mailto: carlo.orrieri@unipv.it}{\color{black} carlo.orrieri@unipv.it} \and Politecnico di Milano, Email: \href{mailto: luca.scarpa@polimi.it}{\color{black} luca.scarpa@polimi.it}}

\maketitle

\begin{abstract}
We prove weak uniqueness of mild solutions for general classes of SPDEs on a Hilbert space. The main novelty is that the drift is only defined on a Sobolev-type subspace and no H\"older-continuity assumptions are required. This framework turns out to be effective to achieve novel uniqueness results for several specific examples. Such wide range of applications is obtained by exploiting either coloured or rougher-than-cylindrical noises.

\vspace{2mm}
\noindent{\small{\textit{MSC2020}: 60H15, 35R60, 35R15.}}

\vspace{2mm}
\noindent{\small{\textit{Keywords}: SPDEs, Weak uniqueness by noise, Kolmogorov equations.}}
\end{abstract}

\setcounter{tocdepth}{2}
\tableofcontents

\maketitle

\section{Introduction}
\label{sec:intro}
We are interested in weak uniqueness by noise for 
stochastic evolution equations of the form 
\beq
\label{eq0}
  \d X + AX\,\d t = B(X)\,\d t +  G\,\d W\,, \qquad X(0)=x\,,
\eeq
where $A$ is a linear self-adjoint maximal monotone 
operator on a Hilbert space $H$ with an effective domain $D(A)$, 
$W$ is an $H$-cylindrical Wiener process,
and $x\in H$.
As far as the drift is concerned, we are interested
in examples where $B$ is a differential-type operator,
meaning that $B$ could be defined only 
on a Sobolev-type subspace of $H$
and may take values in a dual space larger than $H$. 
More precisely, we suppose that 
\[
  B:D(A^\alpha)\to D(A^{-\beta})
\]
for some constants $\alpha\in[0,1)$ and $\beta\in[0,\frac12)$.
Regarding the covariance operator $G$,
we generally require that it is linear and continuous from $H$ to
$D(A^\delta)$ for some $\smash{\delta\in[0,\frac12)}$ and
nondegenerate, with the main prototype being $G=A^{-\delta}$.
Alternatively, in some specific pathological situations we shall consider $G$ to be 
in the differential form $G=A^{\gamma}$ with $\gamma>0$, 
in order to include a noise which is rougher than the cylindrical one.
Precise assumptions on the setting are 
presented in Section~\ref{sec:main}. Typical motivating examples of SPDEs
that can be covered in this setting are
differential perturbations of the heat equation, Burgers equation,
and Cahn-Hilliard equation, as well as reaction-diffusion equations.

\subsection{Existing literature}
Starting from the pioneering contribution by Veretennikov \cite{V1980}, the literature on uniqueness by noise has witnessed important developments in the infinite-dimensional case in recent years. The first results in this direction were obtained in \cite{GP1993} in the context of quasilinear equations, and in \cite{Z2000, ABGP2006}, where weak uniqueness was studied for equations with nondegenerate multiplicative noise and H\"older-continuous bounded drift. This was then refined in \cite{D2003}, where the drift term has the specific form $\smash{B=A^{\frac12}F}$ for some $F:H\to H$ that is H\"older continuous and bounded with a sufficiently small H\"older norm. The most recent result on weak uniqueness that we are aware of is \cite{priola2, priola2_corr}, in which the particular case $B=A^{\frac12}F$ is considered where $F$ is merely locally H\"older-continuous and the assumptions of smallness and boundedness on $F$ are removed.
Besides weak uniqueness, pathwise uniqueness by noise has also been investigated in the infinite-dimensional setting. In this direction, it is worth mentioning \cite{DPF}, which deals with equations with H\"older continuous drift defined everywhere on $H$, and \cite{Dap1, DPFPR2} for equations with Borel-measurable drift, bounded and unbounded, respectively. 
For completeness, we also refer to the works \cite{AB23, FF2011, FGP2010, KR2005}. Strong uniqueness by noise has also been studied in
 \cite{MP2017, AMP2023} using techniques from backward stochastic equations.

The addition of noise has also been shown to 
be effective in providing refined regularisation phenomena.
In this spirit,
we mention the contributions
\cite{GG2019} on stochastic Hamilton-Jacobi equations,
\cite{GS2017, GM2018} for stochastic scalar conservation laws,
\cite{M2020, BMX2023} on non-blow-up effects for stochastic equations,
\cite{BW2022} on $p$-Laplace equations, and
\cite{GL2023, FL2021} in the context of fluid dynamics.

\subsection{Novelty and main goals}
The above-mentioned results on weak uniqueness by noise for infinite-dimensional equations in the form \eqref{eq0} only deal with the particular case $B:H\to H$, which corresponds to the specific choice $\alpha=\beta=0$. The case $\beta>0$ can be handled only if $\alpha=0$, i.e., when the drift $B$ takes the specific form $A^{\beta}F$, where $F:H\to H$ (see, e.g., \cite{priola2, priola2_corr}). Roughly speaking, if $A$ is a classical second-order differential operator on some domain, the limitation $\alpha=\beta=0$ means that the perturbation $B$ acts as an operator of order zero, e.g., the superposition operator associated with a measurable real function. This excludes the possibility of considering singular perturbation terms in differential form arising in several applications, which are dominated by $A$ itself. For example, this is the case for drifts $B$ in the form $B(\cdot)=F(A^\alpha\cdot)$, which may depend on derivatives of the underlying process in a nonlinear way.

The second main limitation of the available literature consists in the form of the covariance operator of the noise, which is chosen to be the identity in the vast majority. One of the reasons for such a specific setting is due to the formal intuition that ``\emph{the rougher the noise, the more regularising the transition semigroup}'', in the sense that the resolvent operator associated with the Kolmogorov equation of the SPDE is more smoothing when $G$ is close to the identity. However, the choice $G=\operatorname{Id}$ (or equivalently $\delta=0$) comes with an important drawback in terms of the actual solvability of the SPDE
in the mild sense. 
For example, it imposes strong constraints on the spatial dimension of the domain, allowing only dimension one in the case of second-order operators. Moreover, if $G$ is the identity, no additional space regularity can be obtained for the stochastic convolution in general, so one is forced to work on the whole space $H$ by considering the choice $\alpha=0$.
More precisely, if the noise is not coloured (i.e.~$\delta=0$) and $\alpha>0$, 
existence of solutions with values in the relevant space $D(A^\alpha)$ fails in general 
due to the lack of regularity on the stochastic convolution.

At an intuitive level, the main idea is that, on the one hand, if $\delta$ is ``close'' to zero, it is simpler to obtain uniqueness results via the Kolmogorov equation, but the existence of mild solutions for the SPDE requires several restrictions (e.g, $\alpha=0$ and low dimensions). 
On the other hand, if $\delta$ is ``larger'', then one loses some regularisation effects on the Kolmogorov operator but gains in the range of solvable SPDEs that can be considered
(e.g., $\alpha>0$ and higher dimensions).

The main goal of this article is to prove weak uniqueness of mild solutions 
for stochastic equations
in the form \eqref{eq0} with possibly singular drift $B$ (i.e.~$\alpha>0$)
and a possibly coloured noise $G$. In particular, this contribution can be regarded as a first step toward developing a general approach for singular equations. Naturally, such a general framework can be further refined when dealing with specific examples by exploiting the typical features of the equation under consideration, 
such as Burgers-type and fluid-dynamical models.
This could be the subject of future, more specific investigations.

In the present paper, we show that by carefully calibrating the parameter $\delta$, 
it is possible to achieve uniqueness of solutions to the SPDE 
in the more general case of $\alpha,\beta>0$.
This is carried out in a wide generality for local-in-time solutions, 
in order to cover as many examples as possible.
More precisely, the main relation between the parameters involved turns out to be 
\[
  0\leq\delta+\beta<\frac12\,,
\] 
which comes naturally from the regularising properties of the associated 
Ornstein-Uhlenbeck semigroup (see \eqref{SF3} with $\gamma=\beta$).
The restrictions on the parameter $\alpha$ are more subtle 
as they are evident only in the case of unbounded drift in the form 
\[
0\leq\alpha+\beta\leq\frac12
\] 
and are related to maximal regularity arguments 
(see Section~\ref{sec:uniq_unb}). The two limiting cases where 
either $(\alpha,\beta)=(0,\frac12)$ or $(\alpha,\beta)=(\frac12,0)$
will be referred to as {\em critical cases}. The former case 
is not covered here, but
has been investigated with the specific choice $\delta=0$
in \cite{priola2, priola2_corr} under a local H\"older-type condition on
the drift. The latter case is covered instead by the present paper.
Uniqueness for the super-critical case $\alpha=0$ and $\beta>\frac12$ 
is an open problem with the choice of a purely cylindrical noise (i.e.~$\delta=0$),
as pointed out in \cite[Rem.~4]{priola2_corr}. 
Here, we provide a partial answer to this:
precisely, we prove that weak uniqueness holds also in the super-critical case
by employing a rougher-than-cylindrical noise, i.e.~in the 
differential form $A^\gamma \d W$ with $\gamma>0$.

We emphasise that the focus of the paper is {\em not} 
to provide a unifying condition that guarantees existence of solutions, 
rather to establish uniqueness as soon as any local solution exists.
Here, we only provide a sufficient condition for 
global-in-time existence of solutions, 
without exploiting any specific structure of $B$.
For what concerns local/global existence, several sharp results 
are already available in the literature, depending on 
the specific structure of the drift $B$ in terms of, e.g., 
monotonicity \cite{LiuRo}, geometric conditions \cite{GP}.

\subsection{Examples and applications}
We briefly list below some examples of stochastic equations of the form \eqref{eq0} for which we can show weak uniqueness, and we refer to Section~\ref{sec:appl} for further details. 
As far as we know, the uniqueness results that we achieve here
are either completely novel or improvements/extensions
of previous contributions in the literature.

We fix a smooth bounded domain $\OO\subset\erre^d$ ($d\geq1$) and a time horizon $T >0$ so that all the equations considered below are defined in $[0, T] \times \OO$. 
For convenience, in the technical 
Proposition~\ref{prop:lap} we provide a list of the main properties of the 
covariance operator in the form $G=A^{-\delta}$, which can be used in the different settings.

\begin{enumerate}[label=(\alph*)]
	\item  We can deal with \emph{heat equations with polynomial perturbation} in the form 
\begin{equation*}
\d X - \Delta X\,\d t = F(X)\,\d t + G\,\d W \,, \qquad X(0)=x_0 \,,
\end{equation*}
with Dirichlet boundary conditions and where 
$F:\erre\to\erre$ 
is a continuous function that behaves like a polynomial of order $p-1$ with $p\geq2$ and $G\in \cL(L^2(\OO),L^2(\OO))$. 
Here, it is possible to apply our setting under a specific choice of $\alpha$, 
$\beta$ and $\delta$, depending on $p$ and the space dimension.
More precisely, when $p=2$ we have a linearly-growing drift and
in the notation of the paper we work with $\alpha=\beta=0$: in this case,
for initial data $x_0\in L^2(\OO)$
we are able to show uniqueness in law for every $\delta\in[0,\frac12)$ in dimension $d=1$, for every $\delta\in(0,\frac12)$ in dimension $d=2$, and for every $\delta\in(\frac14,\frac12)$ in dimension $d=3$.
This extends both the classical results \cite{DPF, Dap1} 
to higher dimensions $d=2,3$ and the work 
\cite{AMP2023} for what concerns the choice of the drift.
When $p>2$ we have a genuine reaction-diffusion equation with superlinear drift:
in this case, for initial data in suitable subspaces of $L^2(\OO)$ 
(see Section~\ref{sec:appl} for details),
it turns out that 
uniqueness in law holds in dimensions $d=1,2$ for every growth $p\geq2$ 
and in dimension $d=3$ for every growth $p\in[2,4)$. 
These are the first results on uniqueness by noise for general 
reaction-diffusion equations.

\item We can address \emph{heat equations with perturbation in divergence-like form} that can be written as follows
\begin{equation*}
\d X - \Delta X\,\d t = (-\Delta)^\beta F(X)\,\d t + G\,\d W \,, \qquad X(0)=x_0\,,
\end{equation*}
with Dirichlet boundary conditions and where $\beta\in(0,\frac12)$, $F:\erre\to\erre$ is continuous and linearly bounded, $x_0\in L^2(\OO)$, and $G\in \cL(L^2(\OO),L^2(\OO))$. In this case, we have that $\alpha=0$. In dimension $d = 1$, we can prove uniqueness in law for the choice of parameters $\beta\in(0,\frac12)$ and $\delta\in[0,\frac12-\beta)$. 
We can also obtain uniqueness in law in dimensions $d=2$ and $d=3$ for the choices of parameters $\beta\in(0,\frac12)$ and $\delta\in(0,\frac12-\beta)$, and $\beta\in(0,\frac14)$ and $\delta\in(\frac14,\frac12-\beta)$, respectively.
We refer to \cite{priola2, priola2_corr} for the critical case 
$\beta=\frac12$ and $\delta=0$
(only in dimension $d=1$). Furthermore, we are also able to show 
uniqueness in law in the supercritical case $\beta\in[\frac12,\frac34)$
with the choice $G=A^{\gamma}$ with $\gamma\in(0,\frac14)$.

\item We can consider \emph{heat equations with perturbation in non-divergence-like form} that can be written as follows
\begin{equation*}
\d X - \Delta X\,\d t = F((-\Delta)^\alpha X)\,\d t + G\,\d W \,,\qquad X(0)=x_0\,,
\end{equation*}
with Dirichlet boundary conditions and where $\alpha\in(0,1)$, $F:\erre\to\erre$ is continuous and linearly bounded, and $G\in \cL(L^2(\OO),L^2(\OO))$. In this case, we have that $\beta=0$. We can show that for initial data $x_0\in D((-\Delta)^\alpha)$,
uniqueness in law holds 
in dimension $d=1$ for the choice of parameters $\alpha\in(0,\frac12]$ and 
$\delta\in[0,\frac12)$ 
(with also $\delta\in(\frac14,\frac12)$ in the critical case $\alpha=\frac12$),
in dimension $d=2$ with the choices $\alpha\in(0,\frac12)$ and 
$\delta\in(\alpha,\frac12)$, and 
in dimension $d=3$ with the choices $\alpha\in(0,\frac14)$ and 
$\delta\in(\alpha+\frac14,\frac12)$.
Moreover, if $F$ is also bounded, then uniqueness is law holds 
for every $\alpha\in(0,1)$ and $\delta\in[0,\frac12)$ in every dimension $d=1,2,3$
and for initial data $x_0\in L^2(\OO)$.
Up to our knowledge, this is one of the first contributions in the literature
dealing with a fully-nonlinear drift.

\item We can deal with \emph{one-dimensional Burgers equations with perturbation} 
in the form
\begin{equation*}
\d X - \Delta X\,\d t = X\cdot\nabla X\,\d t + F(X,\nabla X)\,\d t
+ G\,\d W \,, \qquad X(0)=x_0 \,,
\end{equation*}
with Dirichlet boundary conditions, where $F:\erre^2\to\erre$ is measurable and bounded,
$x_0\in H^1_0(\OO)$ and $G\in \cL^2(L^2(\OO),L^2(\OO))$. Thanks to our results, we can prove that uniqueness in distribution holds for $\alpha=\frac12$, $\beta=0$, and 
$\delta\in(\frac14,\frac12)$. This uniqueness result comes as a byproduct of our strategy,
even though it is already available in the literature via other techniques (see \cite{LiuRo}).

\item Finally, we can also consider \emph{Cahn--Hilliard equations with perturbation} in the form
  \begin{equation*}
  \d X -\Delta (-\Delta X +F_1(X))\,\d t 
  = F_2(X, \nabla X, D^2X)\,\d t + G\,\d W \,, \qquad
  X(0)=x_0\,,
  \end{equation*}
with Neumann-type boundary conditions for $X$ and $\Delta X$, and where $F_1:\erre\to\erre$ is the derivative of the classical polynomial double-well potential as defined in \eqref{eq:doubleWellPot}, $F_2:\erre\times\erre^d\times\erre^{d\times d}\to\erre$ is continuous and linearly bounded in every argument, and $G\in \cL(L^2(\OO),L^2(\OO))$. In the notation of the paper, we have that $\alpha=\frac12$ and $\beta=0$. We can treat dimensions $d=1,2,3$ and we can shows that uniqueness in law holds for every $\delta\in(\frac{d}8,\frac12)$
and for initial data $x_0\in H^2(\OO)$ with $\partial_{\bf n}x_0=0$ on $\partial\OO$.
Again, this is the first contribution on uniqueness by noise for 
a Cahn-Hilliard equation with singular source possibly depending on 
higher space derivatives of the underlying process.
We mention \cite{priola2, priola2_corr} for the case of linearly bounded potential 
and non-singular perturbation.
\end{enumerate}

\subsection{Strategy and technical issues}
Let us begin by emphasising the main technical assumptions on the operators. The only technical requiremens on $B$ are a local boundedness condition from
$D(A^\alpha)$ to $D(A^{-\beta})$
as well as a weak continuity assumption of the form
\begin{equation*}
B:D(A^\alpha)\to Z \quad\text{is strongly-weakly continuous,}
\end{equation*}
where $Z$ is a Hilbert space containing $D(A^{-\beta})$. Since $Z$ can be chosen to be arbitrarily large, and continuity is intended only in a weak sense on $Z$, this condition can be easily checked in practice according to the specific example under consideration. In particular, we stress that \emph{no} H\"older-continuity assumption is required for $B$. This contrasts the above-mentioned literature where this assumption is often employed in order to exploit classical Schauder-type estimates. 
Furthermore, we stress that in the case where $B$ is also bounded, 
the above continuity requirement is not needed, and only measurability of $B$ suffices.

The only technical assumption on the covariance operator $G$ 
is a compatibility condition with the semigroup generated by $-A$, 
as proposed by Da Prato and Zabczyk in \cite[Prop.~6.4.2]{dapratozab2}.
This is automatically satisfied in the Hilbert-Schmidt case, but it is 
significantly weaker.
The only case where we need a Hilbert-Schmidt requirement on $G$
is the critical case $\alpha=\frac12$ and $\beta=0$.

In terms of the strategy, roughly speaking, the idea is to write an It\^o formula for $u(X)$, where $X$ is a suitable weak solution to \eqref{eq0}, and $u$ is a suitable solution to the associated Kolmogorov equation given by:
\beq\label{eq1}
  \lambda u(x)
  -\frac12\operatorname{Tr}(QD^2u(x)) + (Ax, Du(x))
  = f(x) + \langle B(x),Du(x)\rangle\,, \quad x\in D(A)\,,
\eeq
where $Q:=G^*G$, $f$ is a given bounded continuous function on $H$, $\lambda>0$ is a fixed coefficient, and $\langle\cdot,\cdot\rangle$ denotes the duality pairing between $D(A^{-\beta})$ and $D(A^\beta)$.
Proceeding formally, if one is able to give sense to the It\^o formula for $u(X)$, then direct computations show that
\beq\label{eq2}
  u(x)=\E\int_0^{+\infty}e^{-\lambda t}f(X(t))\,\d t\,.
\eeq
Since this holds for every weak solution $X$ starting from $x$, we can infer the uniqueness of the law of $X$ in the space of continuous trajectories with values in $H$.

In order to rigorously justify the aforementioned procedure, several technical issues need to be addressed. Firstly, it should be noted that $X$ is generally only a mild solution to equation \eqref{eq0}, and direct application of the It\^o formula to $X$ is not possible. Therefore, a suitable regularisation $(X_j:=P_jX)_j$ must be introduced, where $(P_j)_j$ is a family of projections converging to the identity as $j\to\infty$. 
Secondly, a similar issue arises with the Kolmogorov equation \eqref{eq1}, as one is only able to show existence of a suitable mild solution $u$ that is only of class $C^1$ on $H$. Consequently, direct application of the It\^o formula to $u$ is not possible. Therefore, it becomes necessary to introduce an additional sequence of regularized functions $(u_\eps)_\eps$. These functions are solutions to the Kolmogorov equation, where the original functions $f$ and $B$ are replaced by smoothed counterparts $(f_\eps)_\eps$ and $(B_\eps)_\eps$ obtained through convolution with Gaussian measures on $H$.
Eventually, one is able to justify rigorously the It\^o formula 
for $u_\eps(X_j)$, which reads, after taking expectations 
and doing standard computations, as follows
\begin{align*}
  &\E\left[u_\eps(X_j(t))\right] - u_\eps(x_j) -\lambda\E\int_0^tu_\eps(X_j(s))\,\d s
  +\E\int_0^tf_\eps(X_j(s))\,\d s\\
  &= \underbrace{\E\int_0^t\left(P_jB(X(s))-B_\eps(X_j(s)), 
  Du_\eps(X_j(s))\right)\,\d s}_{I_1(\eps,j)}
  +\underbrace{\frac12\E\int_0^t\operatorname{Tr}\left[(P_j-I)QD^2
  u_\eps(X_j(s))\right]\,\d s}_{I_2(\eps,j)}\,,
\end{align*}
where $I_1$ and $I_2$ have to be interpreted as remainders 
of the limiting balance yielding \eqref{eq2}.
The novel technical idea is to pass to the limit jointly in $j$ and $\eps$
by using a suitable scaling of the parameters in such a way that 
$I_1(\eps,j), I_2(\eps,j)\to 0$.

In order to pass to the limit in the It\^o formula, 
for the terms on the left-hand side we 
need some convergence of $X_j\to X$
(which easily follows by the choice of $(P_j)_j$) 
and a convergence $u_\eps\to u$ at least in
some H\"older space $C^{0,\gamma}_b(H)$ for some $\gamma\in(0,1]$.
While for the term $I_1$ we only need a uniform bound on the gradients 
$(Du_\eps)_\eps$ at least in $C^0_b(H;D(A^\beta))$,
the term $I_2$ is exactly the one that can be treated 
by choosing a suitable scaling of $\eps$ and $j$. Indeed, 
roughly speaking, the second derivatives $D^2u_\eps$ 
blow up as $\eps\to0$ while $P_j-I$ vanishes as $j\to \infty$.
As a by product of this joint limiting procedure, 
we are able to avoid any H\"older regularity on $B$, and what really matters
is that $B_\eps$ is H\"older for every $\eps$.

The proof of convergence of $u_\eps$ to $u$ and $Du_\eps$ to $Du$
is based on compactness and density arguments in infinite dimension.
More precisely, by using the regularising properties of the Kolmogorov operator,
we first show uniform bounds of $(u_\eps)_\eps$ in $C^1_b(H)$
and of $(Du_\eps)_\eps$ in $C^0_b(H;D(A^\beta))$.
By the Arzel\`a--Ascoli theorem and a localisation procedure on $H$,
this allows to infer the convergence $u_\eps\to \mathfrak u$
and $Du_\eps\to D\mathfrak u$ pointwise in $H$
for some candidate $\mathfrak u\in C^1_b(H)$.
Therefore, the problem is reduced to the identification $u=\mathfrak u$.
Indeed, since in general such convergences are only pointwise in $H$,
and not with respect to the uniform topology,
is it not possible to pass to the limit in the classical $C^1$-sense
in the Kolmogorov equation. 
Nonetheless, 
the convergence in the Kolmogorov equation can be proved in
$L^2(H, N_{Q_\infty})$, where $N_{Q_\infty}$ is the nondegenerate 
Gaussian invariant measure of the associated Ornstein-Uhlenbeck semigroup.
By uniqueness of the limiting Kolmogorov equation in $L^2(H, N_{Q_\infty})$,
it follows that $u=\mathfrak u$, $N_{Q_\infty}$-almost-everywhere on $H$.
At this point, we exploit a lemma originally due It\^o (see \cite{ito}). In particular,
this guarantees that, since $N_{Q_\infty}$ is nondegenerate,
every Borel subset $E\subset H$ with full measure $N_{Q_\infty}(E)=1$
is dense in $H$. By Lipschitz-continuity of $u$ and $\mathfrak u$, this 
allows us to identify $u=\mathfrak u$ on the whole $H$.

Let us stress that the convergence of $I_2$ is not trivial since
$Q$ is not of trace-class. 
However, it is possible to exploit the regularising 
properties of the Kolmogorov operator to show that 
$QD^2u_\eps$ is continuous with values in the trace-class operators,
even if $Q$ is not trace-class itself. 
This allows us to deduce the convergence of $I_2$ to $0$ anyway.

All the arguments described above are done in detail for the case
where $B$ is bounded from $D(A^\alpha)$ to $D(A^{-\beta})$.
For the unbounded case, we use a localisation-extension procedure on
the solutions to the SPDEs and exploit the available uniqueness result 
for the bounded case. More precisely, given a solution $X$ to \eqref{eq0}
in the unbounded case, we first employ some stochastic maximal regularity 
argument to show that $X$ is actually continuous with values in $D(A^\alpha)$. This can be done thanks to the results contained in 
\cite{AV2022, vNVW}. 
Since $B$ is only defined in $D(A^\alpha)$, it is necessary to introduce a sequence of stopping times $(\tau_N)_N$, such that the
stopped processes 
$(X^{\tau_N})_N$ solve the 
equation \eqref{eq0} locally on $[0,\tau_N]$ 
with $B$ replaced by some truncation $B_N$.
Eventually, we show that $X^{\tau_N}$ can be extended 
globally on $[0,+\infty)$ to a solution $\tilde X_N$ of \eqref{eq0}
with $B_N$. Since $B_N$ is bounded, the available uniqueness 
result in the bounded case yields, together with the arbitrariness of $N$,
uniqueness in law also for $X$.

\subsection{Structure of the paper}
The remaining part of this article is organised as follows.
In Section~\ref{sec:main}, we introduce the general setting of the paper
and list the main uniqueness results. More precisely, Theorems~\ref{thm2}-\ref{thm3} contain the results on weak uniqueness in the bounded and unbounded cases.
In Section~\ref{sec:kolm}, we study the limit Kolmogorov equation 
in a mild sense and its approximation through suitably regularised equations.
The proofs of the main results are given in Section~\ref{sec:uniq},
while applications to specific example of interest are thoroughly discussed
in Section~\ref{sec:appl}. 
Finally, in Appendix~\ref{sec:ex} in the bounded case
we provide a sufficient condition for existence
of global weak solutions to equation \eqref{eq0}, which 
is used to extend local solutions in the uniqueness argument mentioned above.

\section{Precise setting and main results}
\label{sec:main}
In this section, we precisely state our assumptions and the main results of this article. 
Throughout the paper, we let $H$ be a separable Hilbert space 
with scalar product $(\cdot, \cdot)$ and norm $\|\cdot\|$,
which will always be identified to its dual space $H^*$. We denote by $\cL^1(H,H)$ and $\cL^2(H,H)$ the usual 
spaces of trace-class and Hilbert-Schmidt operators on $H$, respectively.
Moreover, we also introduce the space
\[
  \cL^4(H,H):=\left\{L\in\cL(H,H): \sum_{k=0}^\infty
  \norm{Le_k}_H^4<+\infty
  \right\}
\] 
where $(e_k)_k$ is a complete orthonormal system of $H$, endowed with 
obvious choice of the norm. Eventually, for $k\in\enne$,
we use the symbols $C^k_b(H)$ and $UC_b(H)$ for the spaces of
$k$-times bounded Fr\'echet-differentiable functions on $H$ with continuous derivatives
and uniformly continuous bounded functions on $H$.
Analogously, we employ the notation $C^{k,\ell}_b(H)$ and $C^{k+\ell}_b(H)$
for the space of $k$-times bounded Fr\'echet-differentiable functions on $H$ with
$\ell$-H\"older derivatives.

We assume the following condition.
\begin{enumerate}[start=1,label={{(H\arabic*})}]
  \item \label{H1} (\textbf{Assumptions on} $A$) The operator $A$ is a linear self-adjoint maximal monotone operator
  with compact resolvent. 
  In particular,
  $(-A)$ generates a strongly continuous symmetric 
  semigroup $(S(t))_{t\geq0}$ of contractions 
  on $H$.
\end{enumerate}

Before stating the remaining conditions, we note that by assumption \ref{H1}, the fractional powers $A^{s}$, 
for $s\in(0,1)$,
are well-defined. Indeed, if $(e_k)_{k\in\enne}$ is a complete orthonormal system of $H$
made of eigenvectors of $A$ and
$(\lambda_k)_{k\in\enne}$ are the corresponding eigenvalues, one can set 
\begin{align*}
  D(A^s)&:=\left\{x\in H: \; \sum_{k=0}^\infty\lambda^{2s}_k (x,e_k)^2<+\infty\right\}\,, \qquad A^{s}x :=\sum_{k=0}^\infty\lambda^s_k (x,e_k)e_k\,, \quad x\in D(A^s)\,.
\end{align*}
For brevity of notation, we define 
\beq
  \label{V_s}
  V_{2s}:= D(A^s)\,, \quad s\in(0,1]\,,
\eeq
and we recall that  $V_{2s}$ is a separable Hilbert space with scalar 
product and norm given by
\begin{align*}
  (x,y)_{2s}&:=(A^sx, A^sy)\,, \quad \forall x,y\in V_{2s}\,, \qquad \norm{x}_{2s}:=\norm{A^sx}\,, \quad \forall x\in V_{2s}\,,
\end{align*}
respectively. The duality pairing between $D(A^{-s})$ and $D(A^s)$
will be denoted by the symbol $\langle\cdot,\cdot\rangle$,
without explicit notation for $s$.
Moreover, the inclusion $V_{2s}\embed H$  is dense for every $s\in(0,1]$
and by definition it holds that 
\[
  (Ax,x)=\|A^{1/2}x\|^2=\norm{x}^2_{1}\,, \quad \forall x\in D(A)\,.
\]
In particular, the semigroup $S$ is also analytic and of negative type (see \cite[Prop.~A12]{dapratozab}). Furthermore, 
for every $s,s'>0$ with $s<s'$ the inclusion $V_{2s'}\embed V_{2s}$ is compact. 

For all $s\in(0,1]$, we naturally set $D(A^{-s}):=D(A^s)^*$, 
endowed with its natural norm. Note that
the semigroup $S$ extends to a semigroup on $D(A^{-s})$, for 
all $s\in(0,1]$, which will be denoted by the same symbol $S$ for brevity. This follows by extrapolation from the fact that 
the restriction of $S$ to $D(A^s)$ is a strongly continuous 
semigroup of contractions on $D(A^s)$.
Indeed, for all $x\in H$, $y\in D(A^s)$, and $t\geq0$, we have
\begin{equation*}
(S(t)x,y)_H=(x,S(t)y)_H\leq 
\|x\|_{D(A^{-s})}\|S(t)y\|_{D(A^s)}\leq 
\|x\|_{D(A^{-s})}\|y\|_{D(A^s)} \,, 
\end{equation*}
and so the (unique) extension follows by density of $H$ in $D(A^{-s})$.

We can now proceed with the remaining assumptions.

\begin{enumerate}[start=2,label={{(H\arabic*})}]
  \item \label{H2} (\textbf{Assumptions on} $B$) Let $\alpha\in[0,1)$, $\beta\in[0,\frac12)$, and let
  $B:D(A^\alpha)\to D(A^{-\beta})$ be measurable 
  and locally bounded. 
  \vspace*{2mm}
  \item \label{H3} (\textbf{Assumptions on} $G$) 
  Let $\delta\in[0,\frac12-\beta)$ and
  let $G\in\cL(H,H)$ be positive self-adjoint such that
  $G$ commutes with $A$, 
  $\operatorname{ker}(G)=\{0\}$, and
  $G(H)=D(A^{\delta})$.
  We set $Q:=GG^*$ and
 \beq
  \label{Qt}
  Q_t:=\int_0^t S(s)QS(s)\,\d s=
  \frac12 A^{-1}Q(I-S(2t))\,, \quad t\geq0\,.
  \eeq
 We assume that 
 there exist $\xi\in(0,\frac12)$, $\lambda>0$, and $\vartheta\in(0,1)$ such that 
 \begin{align}
 \label{eq:cont_time}
 \int_0^Tt^{-2\xi}\|S(t)G\|^2_{\cL^2(H,H)}\,\d t <+\infty \quad\forall\,T>0\,,\\
 \label{eq:L4}
 \int_0^{+\infty} e^{-\lambda t}
 \norm{Q_t^{-\frac12}S(t)G}_{\cL^4(H,H)}^{2(1-\vartheta)}
 \norm{Q_t^{-\frac12}S(2t)Q}^{\vartheta}_{\cL^2(H,H)}\,\d t<+\infty\,.
 \end{align}
\end{enumerate}

The prototypical choice 
of $G$ satisfying assumption \ref{H3}
is given by $G=A^{-\delta}$, with suitable conditions on $\delta$
(see Section~\ref{sec:appl} for some examples).

Note that condition \eqref{eq:cont_time} is classical and guarantees 
both that $Q_t\in \cL^1(H,H)$ for all $t\geq0$ and
the continuity in time with values in $H$ 
for the stochastic convolution (see \cite[Thm.~5.11]{dapratozab}). 
Moreover, recalling that in our case
the semigroup $S$ is analytic, condition \eqref{eq:cont_time}
implies (see \cite[Thm.~5.15]{dapratozab}) that the stochastic convolution also verifies
\beq\label{eq:cont_conv}
  \int_0^\cdot S(\cdot - s)G\,\d W(s)\in 
  C^0(\erre_+; D(A^{\zeta})) \quad\forall\,\zeta\in[0,\xi)\,, \quad\P\text{-a.s.}
  \eeq
Furthermore, although condition \eqref{eq:L4} may look convoluted at first sight,
it is satisfied in numerous well-known examples 
of stochastic equations in space dimension up to three (this will be shown in detail in Section~\ref{sec:appl}).
The reason why we require \eqref{eq:L4} is actually 
very natural, as it guarantees that strong solutions $u$
to the Kolmogorov equation associated to the 
corresponding Ornstein-Uhlenbeck semigroup 
verify $QD^2u\in\cL^1(H,H)$, even when $Q\notin\cL^1(H,H)$.
This is the only sufficient condition for such regularity 
that we are aware of. We refer to \cite[Prop.~6.4.2]{dapratozab2}
for more detail. 

\begin{remark}
  \label{rmk:hs}
  We point out that if $G\in\cL^2(H,H)$, then conditions \eqref{eq:cont_time}
  and \eqref{eq:L4} are satisfied for every $\xi\in(0,\frac12)$ and 
  $\vartheta\in(\frac{2\delta}{2\delta+1}, 1)$. For a detailed proof see Lemma~\ref{lem:hs}
  in Section~\ref{sec:appl}.
\end{remark}


We now state precisely what we mean by weak solution to problem~\eqref{eq0}.
\begin{defin}[Global weak solution]
  \label{def_sol}
  Assume \ref{H1}-\ref{H2}-\ref{H3} and let $x\in H$. 
  A global weak solution to \eqref{eq0} is a sextuplet 
  \[
  \left(\Omega, \cF, (\cF_t)_{t\geq0}, \P, W, X\right)
  \]
  where $(\Omega, \cF, (\cF_t)_{t\geq0}, \P)$ is a filtered probability space
  satisfying the usual conditions, $W$ is a $H$-cylindrical Wiener process, 
  and $X$ is a progressively measurable process such that
  \begin{align*}
  X  &\in C^0(\erre_+; H)\cap 
  L^1_{loc}(\erre_+; D(A^{\alpha})) 
  \quad \text{ and } \quad 
  B(X) \in L^2_{loc}(\erre_+; D(A^{-\beta}))\,, \quad\P\text{-a.s.}\,,
  \end{align*}
satisfying the following equation
  \beq
  \label{eq_sol}
  X(t) = S(t)x + \int_0^tS(t-s)B(X(s))\,\d s + 
  \int_0^tS(t-s)G\,\d W(s)\,, \quad\forall\,t \geq0\,, \quad\P\text{-a.s.}
  \eeq
\end{defin}

\begin{defin}[Local weak solution]
  \label{def_sol_loc}
  Assume \ref{H1}-\ref{H2}-\ref{H3} and let $x\in H$. 
  A local weak solution to \eqref{eq0} is a septuplet 
  \[
  \left(\Omega, \cF, (\cF_t)_{t\geq0}, \P, W, X,\tau\right)
  \]
  where $(\Omega, \cF, (\cF_t)_{t\geq0}, \P)$ is a filtered probability space
  satisfying the usual conditions, $W$ is a $H$-cylindrical Wiener process, 
  $X$ is a progressively measurable process, and $\tau:\Omega\to(0,+\infty]$ is a
  positive stopping time such that the stopped process $X^\tau:=X(\cdot\wedge\tau)$ satisfies
  \begin{align*}
  X^\tau  &\in C^0(\erre_+; H)\cap 
  L^1_{loc}(\erre_+; D(A^{\alpha})) 
  \quad \text{ and } \quad 
  B(X^\tau) \in L^2_{loc}(\erre_+; D(A^{-\beta}))\,, \quad\P\text{-a.s.}\,,
  \end{align*}
 and
  \begin{align}
  \nonumber
  X^\tau(t) &= S(t\wedge\tau)x + 
  \int_0^{t\wedge\tau}S((t\wedge\tau)-s)B(X^\tau(s))\,\d s \\
  &+ 
  \int_0^{t\wedge\tau}S((t\wedge\tau)-s)G\,\d W(s)\,, \quad\forall\,t \geq0\,, \quad\P\text{-a.s.}
  \label{eq_sol_loc}
  \end{align}
\end{defin}
Note that if $\tau=+\infty$ then the Definitions~\ref{def_sol} and \ref{def_sol_loc} coincide.

\begin{remark} 
\label{rmk:maximal}
We recall that in Definition~\ref{def_sol}, we required that 
  $B(X) \in L^2_{loc}(\erre_+;D(A^{-\beta}))$. 
  However, in order to give sense to \eqref{eq_sol}, one could only
  require that $B(X) \in L^1_{loc}(\erre_+;D(A^{-\beta}))$.
  However, the stronger condition $B(X) \in L^2_{loc}(\erre_+;D(A^{-\beta}))$
  is actually natural as it is always satisfied a posteriori
  by the assumptions made on $B$. Indeed, 
  in the bounded case it trivially holds that $B(X) \in L^\infty(\erre_+;D(A^{-\beta}))$ 
  as soon as $X$ takes values in $D(A^{\alpha})$ almost everywhere.
  In the unbounded case, the requirement $B(X) \in L^2_{loc}(\erre_+;D(A^{-\beta}))$
  is needed in order to exploit a maximal regularity argument that guarantees 
  \beq\label{eq:max_reg}
  \int_0^\cdot S(\cdot - s)B(X(s))\,\d s\in 
  C^0(\erre_+; D(A^{\frac12-\beta})) \,, \quad\P\text{-a.s.}
  \eeq
  Nonetheless, 
  since a posteriori the process $X$ is in $L^\infty(0,T; D(A^{\alpha}))$, the locally 
  boundedness assumption on $B$ guarantees 
  that actually $B(X) \in L^\infty(\erre_+;D(A^{-\beta}))$ 
  also in this case. 
\end{remark}

For completeness, we specify in the following definition what it means for two continuous processes taking values in $H$ to have the same law. 

\begin{defin}[Uniqueness in law]
  \label{def_uniq_law}
Assume that $X$ and $Y$ are two progressively measurable processes defined on a filtered probability space $(\Omega, \cF, (\cF_t)_{t\geq0}, \P)$ such that $X$, $Y \in C^0(\erre_+; H)$, $\P$-a.s. We say that $X$ and $Y$ have the same law on $C^0(\erre_+; H)$ if 
  \[
    \E\left[\psi(X)\right] = \E\left[\psi(Y)\right]\,,
    \quad  \forall \, \psi: C^0(\erre_+; H)\to \erre \text{ continuous and bounded}\,.
  \] 
\end{defin}

We are now ready to state our main results.
The first result is a weak uniqueness property in the case of bounded drift
for global weak solutions: considering global solutions here
is very natural since the boundedness of the drift
guarantees global existence under very reasonable assumptions (see Appendix~\ref{sec:ex}). 

\begin{thm}[$B$ bounded]
  \label{thm2}
  Assume \ref{H1}-\ref{H2}-\ref{H3}, and that 
  $B:D(A^\alpha)\to D(A^{-\beta})$ 
  is measurable and bounded.
  Let $x\in H$ and 
  let 
  \[
  \left(\Omega, \cF, (\cF_t)_{t\geq0}, \P, W, X\right)\,, \qquad
  \left(\Omega, \cF, (\cF_t)_{t\geq0}, \P, W, Y\right)
  \]
  be two global weak solutions to \eqref{eq0} in the sense of Definition~\ref{def_sol}
  with respect to the same initial datum $x$.
  Then, $X$ and $Y$ have the same law in the sense of Definition~\ref{def_uniq_law}.
  \end{thm}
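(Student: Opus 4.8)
The plan is to reduce weak uniqueness to a representation formula obtained from the Kolmogorov equation \eqref{eq1}. Fix a bounded continuous $f$ and $\lambda>0$, and let $u$ be the mild solution to \eqref{eq1} furnished by the analysis of Section~\ref{sec:kolm}, which enjoys $u\in C^1_b(H)$ and $Du\in C^0_b(H;D(A^\beta))$. The target is the identity \eqref{eq2}, namely $u(x)=\E\int_0^{+\infty}e^{-\lambda t}f(X(t))\,\d t$, valid for every global weak solution $X$ starting at $x$. Since the right-hand side is the Laplace transform in $t$ of $\E[f(X(t))]$ and $u$ depends only on the data $(f,\lambda,A,B,G)$ and not on the particular solution, any two global weak solutions $X,Y$ from $x$ share this transform for every $\lambda>0$; injectivity of the Laplace transform together with continuity in $t$ then yields $\E[f(X(t))]=\E[f(Y(t))]$ for all $t\geq0$ and all bounded continuous $f$, i.e.\ equality of the one-dimensional marginals. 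Equality of the full law on $C^0(\erre_+;H)$ is then obtained by the usual bootstrap exploiting the Markov structure underlying the resolvent representation \eqref{eq2}.

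The core of the work is to justify \eqref{eq2} rigorously through a double regularisation. First, since $X$ is only a mild solution, I would project onto the finite-dimensional eigenspaces of $A$, setting $X_j:=P_jX$, so that $X_j$ solves a regular equation amenable to It\^o's formula. Second, since $u$ is only $C^1$, I would replace $f$ and $B$ by Gaussian convolutions $f_\eps,B_\eps$ and work with the smoother solutions $u_\eps$ of the corresponding Kolmogorov equations. Applying It\^o's formula to $u_\eps(X_j)$ and using the equation solved by $u_\eps$ yields, after taking expectations, precisely the balance displayed in the Strategy section, with the two remainder terms $I_1(\eps,j)$ and $I_2(\eps,j)$. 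The crucial point is to pass to the limit jointly in $\eps\to0$ and $j\to\infty$ along a coupling $j=j(\eps)$ chosen so that both remainders vanish.

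For the left-hand side, the needed convergences are $X_j\to X$ in $C^0(\erre_+;H)$ (immediate from $P_j\to I$ strongly and $\P$-a.s.\ continuity of $X$) together with $u_\eps\to u$ in some $C^{0,\gamma}_b(H)$; these give convergence of the $u_\eps(X_j)$ and $f_\eps(X_j)$ terms. For $I_1$, the duality pairing between $D(A^{-\beta})$ and $D(A^\beta)$ combined with the uniform bound on $(Du_\eps)_\eps$ in $C^0_b(H;D(A^\beta))$ and the boundedness of $B$ forces $I_1\to0$. The delicate term is $I_2=\frac12\E\int_0^t\operatorname{Tr}[(P_j-I)QD^2u_\eps(X_j)]\,\d s$: here $D^2u_\eps$ blows up as $\eps\to0$ while $P_j-I\to0$, so one needs a quantitative rate. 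This is exactly where assumption \eqref{eq:L4} enters, guaranteeing that $QD^2u_\eps$ is continuous with values in the trace class $\cL^1(H,H)$ with a controlled $\eps$-dependence, even though $Q$ is not itself trace class; choosing $j(\eps)$ so that the decay of $(P_j-I)Q$ beats the blow-up of the trace norm then yields $I_2\to0$. I expect this to be the main obstacle, both because $Q\notin\cL^1(H,H)$ and because it couples the two limiting parameters.

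It remains to identify the limit of $u_\eps$ with $u$, which I would do by compactness rather than by direct convergence in the equation. The uniform bounds of $(u_\eps)_\eps$ in $C^1_b(H)$ and of $(Du_\eps)_\eps$ in $C^0_b(H;D(A^\beta))$, together with Arzel\`a--Ascoli and a localisation on $H$, produce a candidate $\mathfrak u\in C^1_b(H)$ with $u_\eps\to\mathfrak u$ and $Du_\eps\to D\mathfrak u$ pointwise. Passing to the limit in the regularised Kolmogorov equations in $L^2(H,N_{Q_\infty})$ and invoking uniqueness there gives $u=\mathfrak u$ $N_{Q_\infty}$-a.e.; since $N_{Q_\infty}$ is nondegenerate, It\^o's lemma \cite{ito} makes every full-measure Borel set dense, so Lipschitz continuity of both functions upgrades this to $u=\mathfrak u$ on all of $H$. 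With the limit identified, the joint passage to the limit closes the balance, letting $t\to+\infty$ yields \eqref{eq2}, and the uniqueness conclusion follows as in the first paragraph.
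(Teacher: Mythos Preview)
Your proposal is correct and follows essentially the same approach as the paper's proof in Section~\ref{ssec:uniq_bound}. One clarification on $I_2$: you do not need (and will not obtain) any quantitative $\eps$-uniform control on $\|QD^2u_\eps\|_{\cL^1(H,H)}$; instead, for each \emph{fixed} $\eps$ the fact that $QD^2u_\eps\in C^0_b(H;\cL^1(H,H))$ (Proposition~\ref{prop:kolm_reg}) and the strong convergence $P_j\to I$ give $I_2(\eps,j)\to0$ as $j\to\infty$ qualitatively, and then a diagonal choice $(\eps_k,j_k)$ with $j_k$ chosen after $\eps_k$ drives both remainders to zero---the paper explicitly notes that boundedness of $(u_\eps)_\eps$ in $C^2_b(H)$ is neither used nor true.
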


Our second result provides weak uniqueness for local solutions 
in the case of unbounded drift. Working with local solutions in this case is
very natural since global existence may fail in general if the drift is unbounded.
Here the unboundedness of the drift requires a restriction on the parameters
and a weak continuity requirement on $B$.

\begin{thm}[$B$ unbounded]
  \label{thm3}
  Assume \ref{H1}-\ref{H2}-\ref{H3} and let $\alpha\in[0,\frac12-\beta]\cap[0,\xi)$.
  Suppose also that there exists a Hilbert space $Z$ such that $D(A^{-\beta})\embed Z$ 
  and that $B$ is strongly-weakly continuous from $D(A^\alpha)$ to $Z$.
  Let $x\in D(A^\alpha)$ and  
  \[
  \left(\Omega, \cF, (\cF_t)_{t\geq0}, \P, W, X,\tau_X\right)\,, \qquad
  \left(\Omega, \cF, (\cF_t)_{t\geq0}, \P, W, Y,\tau_Y\right)
  \]
  be two local weak solutions to \eqref{eq0} in the sense of Definition~\ref{def_sol_loc}
  with respect to the same initial datum $x$.
  Then, setting $\tau:=\tau_X\wedge\tau_Y$, $X^\tau$ and $Y^\tau$
  have the same law in the sense of Definition~\ref{def_uniq_law}.
  Furthermore, under the stronger condition that $G\in \cL^2(H, D(A^{\delta'}))$
  for some $\delta'>0$,
  then the same conclusion holds also in the limiting case 
  $\alpha=\frac12$ and $\beta=0$.
\end{thm}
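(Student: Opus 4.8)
The plan is to reduce the unbounded case to the bounded-drift result of Theorem~\ref{thm2} through a localisation-extension procedure. The first step is to upgrade the spatial regularity of the two stopped solutions, namely to prove that $X^\tau, Y^\tau \in C^0(\erre_+; D(A^\alpha))$ $\P$-almost surely. I would read this off from the mild formulation \eqref{eq_sol_loc} term by term: the semigroup term $S(\cdot)x$ lies in $C^0(\erre_+;D(A^\alpha))$ since $x\in D(A^\alpha)$ and $S$ acts as a strongly continuous analytic semigroup on $D(A^\alpha)$; the deterministic convolution $\int_0^\cdot S(\cdot-s)B(X^\tau(s))\,\d s$ lies in $C^0(\erre_+;D(A^{\frac12-\beta}))$ by the maximal regularity \eqref{eq:max_reg}, using $B(X^\tau)\in L^2_{loc}(\erre_+;D(A^{-\beta}))$, and $D(A^{\frac12-\beta})\embed D(A^\alpha)$ precisely because $\alpha\leq\frac12-\beta$; the stochastic convolution lies in $C^0(\erre_+;D(A^\zeta))$ for every $\zeta<\xi$ by \eqref{eq:cont_conv}, hence in $C^0(\erre_+;D(A^\alpha))$ since $\alpha<\xi$. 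This is exactly the point at which the hypotheses $\alpha\in[0,\frac12-\beta]\cap[0,\xi)$ enter the argument.

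With this regularity in hand, for every $N\in\enne$ I would introduce a bounded measurable truncation $B_N:D(A^\alpha)\to D(A^{-\beta})$ that coincides with $B$ on the ball $\{z\in D(A^\alpha):\norm{A^\alpha z}\leq N\}$ and inherits the strong-weak continuity into $Z$ (for instance by composing $B$ with a radial retraction onto this ball). Since $X^\tau,Y^\tau$ are continuous in $D(A^\alpha)$, their exit times from the $N$-ball in $D(A^\alpha)$ are genuine stopping times which increase to $\tau$ as $N\to\infty$ (no finite-time blow-up of the $D(A^\alpha)$-norm is possible on a continuous path); up to these localising times the processes $X$ and $Y$ solve the mild equation with $B$ replaced by $B_N$, because $B(X)=B_N(X)$ as long as the $D(A^\alpha)$-norm stays below $N$. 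The crucial construction is then to extend each locally stopped process to a \emph{global} weak solution $\tilde X_N$, $\tilde Y_N$ of \eqref{eq0} with the bounded drift $B_N$, by concatenating the stopped trajectory with a globally existing $B_N$-solution restarted at the stopping time; global existence of the latter is guaranteed by the boundedness and weak continuity of $B_N$, via the existence result of Appendix~\ref{sec:ex}.

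I expect the main obstacle to lie precisely in this gluing step: one must verify that the concatenated process is progressively measurable and adapted, and that it satisfies the mild formulation \eqref{eq_sol} on the whole half-line, which requires exploiting the flow property of the stochastic convolution across the gluing time together with the consistency of the driving noise. Once $\tilde X_N$ and $\tilde Y_N$ are bona fide global solutions with the same bounded drift $B_N$ and the same initial datum $x$, Theorem~\ref{thm2} applies and yields that they share the same law on $C^0(\erre_+;H)$. From here I would transfer the equality of laws back to the stopped solutions: since $\tilde X_N$ agrees with $X^\tau$ up to the localising time and these times increase to $\tau$, the stopped-then-frozen paths converge in $C^0(\erre_+;H)$ as $N\to\infty$, so a dominated-convergence argument on bounded continuous functionals $\psi$ of the trajectory gives $\E[\psi(X^\tau)]=\E[\psi(Y^\tau)]$, that is, the desired uniqueness in law. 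Care is needed to keep the localising times compatible with the common stopping time $\tau=\tau_X\wedge\tau_Y$ and to ensure the relevant functionals are measurable with respect to the $D(A^\alpha)$-valued trajectory.

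Finally, for the critical case $\alpha=\frac12$, $\beta=0$ the only step that breaks down is the regularity of the stochastic convolution: now $\xi<\frac12=\alpha$, so \eqref{eq:cont_conv} no longer places the convolution in $D(A^{1/2})$. Here I would exploit the stronger hypothesis $G\in\cL^2(H,D(A^{\delta'}))$: the additional Hilbert-Schmidt smoothing by $A^{\delta'}$ upgrades the stochastic convolution to $C^0(\erre_+;D(A^{1/2}))$, while the deterministic convolution already lands in $D(A^{\frac12-\beta})=D(A^{1/2})$ by maximal regularity. Having re-established $X^\tau,Y^\tau\in C^0(\erre_+;D(A^{1/2}))$ in this way, the truncation-extension-limit scheme above carries over verbatim, and Theorem~\ref{thm2} again closes the argument.
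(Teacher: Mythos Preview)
Your proposal is correct and follows essentially the same route as the paper: first upgrade the stopped solutions to $C^0(\erre_+;D(A^\alpha))$ via the three-term analysis of the mild formulation (using $\alpha\le\tfrac12-\beta$ for the deterministic convolution and $\alpha<\xi$ for the stochastic one), then truncate $B$ by composing with the radial retraction onto the $N$-ball in $D(A^\alpha)$, glue the stopped process with a global $B_N$-solution obtained from Appendix~\ref{sec:ex}, invoke Theorem~\ref{thm2}, and let $N\to\infty$. The paper carries out the gluing explicitly on product probability spaces $\Omega\times\Omega_N'$ with concatenated Wiener processes and verifies the mild formulation on $\{t>\tau_N\}$ by the semigroup property of $S$, which is exactly the ``flow property across the gluing time'' you anticipate as the main obstacle; your treatment of the critical case $\alpha=\tfrac12$, $\beta=0$ via the Hilbert--Schmidt assumption on $G$ also matches the paper's use of stochastic maximal regularity to place the stochastic convolution in $C^0([0,T];D(A^{1/2}))$.
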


\begin{remark}
  Let us point out that the choice of $\alpha$ and $\beta$
  is not intrinsic in $B$. Indeed, 
  it is possible that 
  the same differential operator $B$ satisfies assumption \ref{H2} 
  for different choices of $\alpha$ and $\beta$,
  depending on whether one considers 
  strong or weak formulations of it. 
  What is actually intrinsic in the choice of $B$ is the amplitude 
  $\alpha+\beta$, which represents the differential order of $B$.
  More precisely, given a differential operator $B$ of order $q$,
  one can frame the equation in several ways 
  by choosing $\alpha,\beta$ such that $\alpha+\beta=q$.
  This will be shown in detail in Section~\ref{sec:appl}.
\end{remark}

\begin{remark}
  Note that in Theorem~\ref{thm3}, the requirement $\alpha\in[0,\frac12-\beta]\cap[0,\xi)$
  excludes a priori the limiting case $\alpha=\frac12$ and $\beta=0$.
  However, this issue can be overcome by enforcing the regularity of $G$
  in a Hilbert-Schmidt sense
  as specified in the theorem. This allows us to employ stochastic maximal regularity 
  arguments in the localisation procedure.
\end{remark}

Note that the choice $\tau_X=\tau_Y=+\infty$ in Theorem~\ref{thm3} is allowed 
and corresponds to weak uniqueness for global solutions.
Moreover, let us point out the particular case where
$\alpha=\beta=\delta=0$ is the setting considered in \cite{Dap1, DPFPR2}, while
in the framework of Theorems~\ref{thm2}--\ref{thm3}
when $\alpha=0$ we are also able to cover the case $\beta\in[0,\frac12)$. 
The critical case $\alpha=\delta=0$ and $\beta=\frac12$ has been analysed 
in \cite{priola2} for a specific form of $B$ satisfying some local H\"older condition.

The super-critical case $\beta\in(\frac12,1)$,
as well as the drop of the H\"older assumption 
in the case $(\alpha,\beta)=(0,\frac12)$,
is an open problem when one considers $\delta=0$
as pointed out in \cite[Rmk.~4]{priola2_corr}.
In this direction, we give a partial answer to this question. More precisely,
we show that uniqueness can be achieved also in the 
regime $\beta>\frac12$ if one suitably considers a rougher noise, i.e.
\beq\label{eq:-gamma}
  \d X + AX\,\d t = B(X)\,\d t + A^\gamma\d W
\eeq
with $\gamma>0$.
This is the content of our last result, which follows from 
Theorems~\ref{thm2}--\ref{thm3} by suitably changing the functional setting.

\begin{cor}
  \label{thm4}
  Let $\alpha\in[0,\xi)$, $\beta\in[0,\frac12 + \xi-\alpha)$, and
  $\gamma\in[0,\beta]\cap(\beta-\frac12,\xi-\alpha)$.
  Let the operator $A$ satisfy \ref{H1} and \ref{H3} with the choice $\delta=0$, and
  let $B:D(A^\alpha)\to D(A^{-\beta})$ be measurable and bounded.
  Then, for every $x\in D(A^{-\gamma})$ there exists a sextuplet 
  $(\Omega, \cF, (\cF_t)_{t\geq0}, \P, W, X)$
  where $(\Omega, \cF, (\cF_t)_{t\geq0}, \P)$ is a filtered probability space
  satisfying the usual conditions, $W$ is a $H$-cylindrical Wiener process, 
  and $X$ is a progressively measurable process 
  \[
  X\in C^0(\erre_+; D(A^{-\gamma}))\cap 
  L^1_{loc}(\erre_+; D(A^\alpha))\,, \quad\P\text{-a.s.}\,,
  \]
  with 
    \[
    \int_0^\cdot S(t-s)A^\gamma\,\d W(s) \in C^0(\erre_+; D(A^{-\gamma}))\,,
  \]
  satisfying the following equation
  \[
  X(t) = S(t)x + \int_0^tS(t-s)B(X(s))\,\d s + 
  \int_0^tS(t-s)A^\gamma\,\d W(s)\,, \quad\forall\,t \geq0\,, \quad\P\text{-a.s.}
  \]
  Moreover, the law of $X$ on $C^0(\erre_+; D(A^{-\gamma}))$ is unique.
\end{cor}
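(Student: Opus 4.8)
The plan is to reduce the statement to the already-established bounded case, namely Theorem~\ref{thm2} for uniqueness together with the existence result of Appendix~\ref{sec:ex}, by recasting the equation in the shifted Hilbert space $\widetilde H := D(A^{-\gamma})$. The crucial observation is that $\Lambda := A^{\gamma}$ is an isometric isomorphism from $H$ onto $\widetilde H$, and since $A^\gamma$ commutes with $A$ (hence with $S$), conjugation by $\Lambda$ leaves all the operators formally unchanged. Concretely, I would set $\widetilde A := \Lambda A \Lambda^{-1}$, which is again linear, self-adjoint and maximal monotone with compact resolvent on $\widetilde H$ (all these properties are preserved by unitary equivalence, e.g.\ $(\widetilde A+I)^{-1}=\Lambda(A+I)^{-1}\Lambda^{-1}$ is compact), so that \ref{H1} holds for $\widetilde A$. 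Its fractional domains are $D(\widetilde A^{s}) = D(A^{s-\gamma})$ with matching norms, and the generated semigroup $\widetilde S(t) = \Lambda S(t)\Lambda^{-1}$ coincides with $S(t)$ as an operator.

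Next I would transform the noise. Setting $\widetilde W := \Lambda W = A^{\gamma} W$, the unitarity of $\Lambda$ ensures that $\widetilde W$ is an $\widetilde H$-cylindrical Wiener process, and because $A^\gamma$ commutes with $S$ we have $A^\gamma\,\d W = \d\widetilde W$ and $\int_0^\cdot S(\cdot-s)A^\gamma\,\d W(s) = \int_0^\cdot \widetilde S(\cdot-s)\,\d\widetilde W(s)$. Hence, in the $\widetilde H$ picture the noise is purely cylindrical, i.e.\ $\widetilde G = \mathrm{Id}$ and $\widetilde\delta = 0$. Assumption \ref{H3} on $\widetilde H$ with $\widetilde\delta=0$ then follows from the hypothesis that $A$ satisfies \ref{H3} with $\delta=0$ on $H$, since \eqref{eq:cont_time} and \eqref{eq:L4} are preserved by the unitary equivalence and produce the same exponent $\xi$; in particular the stochastic convolution inherits the regularity \eqref{eq:cont_conv}, giving $\int_0^\cdot S(\cdot-s)A^\gamma\,\d W(s)\in C^0(\erre_+; D(A^{-\gamma}))$.

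It then remains to read off the parameters of the drift in the new setting. Regarding $B$ as a map on $\widetilde H$ and using $D(\widetilde A^{s})=D(A^{s-\gamma})$, the requirement $B:D(A^\alpha)\to D(A^{-\beta})$ becomes $B:D(\widetilde A^{\widetilde\alpha})\to D(\widetilde A^{-\widetilde\beta})$ with $\widetilde\alpha=\alpha+\gamma$ and $\widetilde\beta=\beta-\gamma$. I would check that the standing assumptions of Theorem~\ref{thm2} hold for $(\widetilde\alpha,\widetilde\beta,\widetilde\delta)$: from $\gamma<\xi-\alpha$ and $\xi<\tfrac12$ one gets $\widetilde\alpha=\alpha+\gamma<\xi<1$, while $\gamma\in[0,\beta]\cap(\beta-\tfrac12,\xi-\alpha)$ yields $0\le\widetilde\beta=\beta-\gamma<\tfrac12$, so \ref{H2} holds; moreover $\widetilde\delta=0\in[0,\tfrac12-\widetilde\beta)$ since $\widetilde\beta<\tfrac12$, which is the compatibility part of \ref{H3}. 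The feasibility of choosing such a $\gamma$, i.e.\ the non-emptiness of $[0,\beta]\cap(\beta-\tfrac12,\xi-\alpha)$, is exactly the hypothesis $\beta<\tfrac12+\xi-\alpha$, so the parameter constraints of the corollary match precisely the admissible region of Theorem~\ref{thm2}.

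Existence of a global weak solution $X\in C^0(\erre_+;\widetilde H)\cap L^1_{loc}(\erre_+;D(\widetilde A^{\widetilde\alpha}))$, which upon relabelling is $X\in C^0(\erre_+;D(A^{-\gamma}))\cap L^1_{loc}(\erre_+;D(A^\alpha))$, would be supplied by the sufficient condition of Appendix~\ref{sec:ex} applied in the $\widetilde H$ setting, whose hypotheses reduce here to $\widetilde\alpha<\xi$ and $\widetilde\alpha+\widetilde\beta=\alpha+\beta<1$, both guaranteed by the constraints above. Since $B$ is bounded, uniqueness in law on $C^0(\erre_+;\widetilde H)=C^0(\erre_+;D(A^{-\gamma}))$ is then a direct application of Theorem~\ref{thm2} to the transformed equation. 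I expect the main obstacle to be bookkeeping rather than conceptual: one must verify carefully that the unitary conjugation genuinely transfers every ingredient of \ref{H3} — in particular the non-trace-class integrability condition \eqref{eq:L4} — to $\widetilde H$ with the same constant $\xi$, and that the solution notion of Definition~\ref{def_sol} is stable under the passage from $H$ to $\widetilde H$, so that a mild solution of the original equation in $D(A^{-\gamma})$ is exactly a mild solution of the conjugated equation in $\widetilde H$. Once this identification is in place, Theorem~\ref{thm2} and the existence result apply without further work.
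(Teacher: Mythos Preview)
Your proposal is correct and follows essentially the same route as the paper: shift the problem to $\widetilde H=D(A^{-\gamma})$ with $\widetilde\alpha=\alpha+\gamma$, $\widetilde\beta=\beta-\gamma$, $\widetilde\delta=0$, verify that \ref{H1}--\ref{H3} transfer (the paper does this via the eigenbasis $\widetilde e_k=\lambda_k^\gamma e_k$ and the observation that $\|T\widetilde e_k\|_{\widetilde H}=\|Te_k\|_H$ for operators commuting with $A$, which is exactly your unitary-equivalence argument), and then invoke Theorem~\ref{thm1} for existence and Theorem~\ref{thm2} for uniqueness. The only cosmetic difference is that you phrase the transfer of \eqref{eq:cont_time}--\eqref{eq:L4} abstractly via conjugation by $\Lambda=A^\gamma$, while the paper writes out the $\cL^p$ norms explicitly; the content is identical.
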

A typical application of Corollary~\ref{thm4} is given by divergence-like perturbation 
of the heat equation in a super-critical regime. A detailed example is presented in Section~\ref{sec:appl}.

\section{The Kolmogorov equation}
\label{sec:kolm}

In this section, we assume that $B:D(A^\alpha)\to D(A^{-\beta})$ is bounded,
and we work under assumptions \ref{H1}-\ref{H2}-\ref{H3}.
This will be enough to prove uniqueness in law also for more general operators $B$
by using a localisation argument at the SPDE-level 
(see \cite{DPFPR2} for a similar technique).

We consider the Kolmogorov equation associated to \eqref{eq0}, i.e.,
\beq
 \label{eq_kolm}
 \lambda u(x) + Lu(x) = f(x) + \sp{B(x)}{Du(x)}\,, \quad x\in D(A)\,, 
\eeq
where $\lambda >0$, $f\in UC^0_b(H)$, and the operator $L$ is defined as
\[
  Lv(x):=-\frac12\operatorname{Tr}(QD^2v(x)) + (Ax, Dv(x))\,, 
  \quad x\in D(A)\,,\quad v\in C^2_b(H)\,.
\]
We introduce the Ornstein-Uhlenbeck semigroup associated to $L$ by letting
\[
  [R_tv](x):=\int_Hv(y+S(t)x)\,N_{Q_t}(\d y)\,, 
  \quad x\in H\,, \quad v\in\cB_b(H)\,, \quad t\geq0\,,
\]
where $N_{Q_t}$ denotes the Gaussian measure with mean $0$ and covariance operator $Q_t$ defined in \eqref{Qt}. We recall that the probability measure $N_{Q_\infty}$, with
\beq
  \label{Q_inf}
  Q_\infty:=\int_0^\infty S(s)QS(s)\,\d s=\frac12A^{-1}Q\,,
\eeq
is the unique invariant measure for the semigroup $R$, and that $R$ uniquely extends to
a semigroup of contractions also on $L^2(H,N_{Q_\infty})$.

We collect here some useful properties of the operator $Q_t$.
\begin{lem}\label{lem_strong}
  Assume \ref{H1}-\ref{H2}-\ref{H3}. Then, it holds that 
  \beq
  \label{strong1}
  S(t)(H)\subset Q_t^{1/2}(H)\,, \quad\forall\,t>0\,,
\eeq
and for all $\gamma\geq0$
there exists a constant $C_\gamma>0$, depending on $G$, such that
  \beq
  \label{strong2}
  \norm{A^\gamma Q_t^{-1/2}S(t)}_{\cL(H,H)}\leq 
  \frac{C_\gamma}{t^{\frac12+\delta+\gamma}}\,,
  \quad\forall\,t>0\,.
\eeq
\end{lem}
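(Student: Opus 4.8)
The plan is to exploit the fact that, by \ref{H1} and \ref{H3}, the operators $A$, $G$, $Q$, and $S(t)$ all commute and hence share the common orthonormal eigenbasis $(e_k)_k$; this reduces both claims to a scalar computation on each eigenspace. Writing $A e_k = \lambda_k e_k$ and $G e_k = g_k e_k$ with $g_k > 0$, the condition $\ker(G)=\{0\}$ together with $G(H)=D(A^{\delta})$ forces $A^{\delta}G$ to be bounded with bounded inverse (closed graph theorem), so there are constants $0 < c \leq C$, depending only on $G$, with $c\,\lambda_k^{-\delta} \leq g_k \leq C\,\lambda_k^{-\delta}$ for every $k$. Using \eqref{Qt} and the commutation, the eigenvalue of $Q_t$ is $\mu_k(t) = g_k^2\,(1-e^{-2\lambda_k t})/(2\lambda_k)$, so $A^{\gamma}Q_t^{-1/2}S(t)$ is diagonal in $(e_k)_k$ with eigenvalues
\[
  \lambda_k^{\gamma}\,\mu_k(t)^{-1/2}\,e^{-\lambda_k t}
  = \lambda_k^{\gamma}\,\frac{1}{g_k}\,
  \Bigl(\frac{2\lambda_k}{1-e^{-2\lambda_k t}}\Bigr)^{1/2}e^{-\lambda_k t}\,,
\]
and its operator norm on $H$ equals the supremum over $k$ of the absolute values of these numbers.

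The core step is then a scaling argument. Inserting $c\,\lambda_k^{-\delta}\leq g_k\leq C\,\lambda_k^{-\delta}$ and substituting $\rho := \lambda_k t > 0$, each eigenvalue is bounded, up to the constant factor $c^{-1}\sqrt 2$ depending on $G$, by
\[
  t^{-(\frac12+\delta+\gamma)}\;
  \rho^{\frac12+\delta+\gamma}\,(1-e^{-2\rho})^{-1/2}\,e^{-\rho}\,.
\]
Setting $a := \tfrac12+\delta+\gamma > 0$, it remains to check that $\Phi(\rho) := \rho^{a}(1-e^{-2\rho})^{-1/2}e^{-\rho}$ is bounded on $(0,+\infty)$. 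As $\rho\to0^{+}$ one has $1-e^{-2\rho}\sim 2\rho$, hence $\Phi(\rho)\sim 2^{-1/2}\rho^{a-1/2}=2^{-1/2}\rho^{\delta+\gamma}$, which is bounded since $\delta+\gamma\geq0$; as $\rho\to+\infty$ the factor $(1-e^{-2\rho})^{-1/2}\to1$ and $\rho^{a}e^{-\rho}\to0$ by exponential decay. Since $\Phi$ is continuous on $(0,+\infty)$ and bounded near both endpoints, $\sup_{\rho>0}\Phi(\rho)=:K_\gamma<+\infty$, which yields \eqref{strong2} with $C_\gamma = c^{-1}\sqrt{2}\,K_\gamma$ and proves the whole family of estimates in one stroke.

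Finally, the inclusion \eqref{strong1} follows from the case $\gamma=0$. Diagonalising $Q_t^{1/2}$, its range is $Q_t^{1/2}(H)=\{y\in H:\sum_k \mu_k(t)^{-1}(y,e_k)^2<+\infty\}$. For $x\in H$ and fixed $t>0$, the terms of $\sum_k \mu_k(t)^{-1}e^{-2\lambda_k t}(x,e_k)^2$ carry precisely the squared eigenvalues of $Q_t^{-1/2}S(t)$, which by \eqref{strong2} with $\gamma=0$ are uniformly bounded in $k$ by $C_0^2\,t^{-(1+2\delta)}$; hence the series is dominated by $C_0^2\,t^{-(1+2\delta)}\norm{x}^2<+\infty$, so $S(t)x\in Q_t^{1/2}(H)$. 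I expect the only genuinely delicate point to be the comparison $g_k \asymp \lambda_k^{-\delta}$ extracted from $G(H)=D(A^{\delta})$, since everything afterwards is the elementary one-variable estimate on $\Phi$; the continuity arguments for passing from pointwise eigenvalue bounds to operator-norm bounds are routine given the simultaneous diagonalisation.
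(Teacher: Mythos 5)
Your proof is correct and follows essentially the same route as the paper's: simultaneous diagonalisation, the explicit eigenvalue formula for $Q_t$ from \eqref{Qt}, the comparison $g_k\asymp\lambda_k^{-\delta}$ (the paper phrases this as $G^{-1}:V_{2\delta}\to H$ being an isomorphism, of which only the lower bound on $g_k$ is actually needed), the substitution $\rho=\lambda_k t$, and the boundedness of $r\mapsto r^{\frac12+\delta+\gamma}e^{-r}(1-e^{-2r})^{-1/2}$ on $(0,\infty)$. The only addition is your explicit verification of \eqref{strong1} via the range characterisation of $Q_t^{1/2}$, which the paper leaves implicit in the $\gamma=0$ bound.
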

\begin{proof}
Noting that $Q_t=\frac12 A^{-1}Q(I-S(2t))$ and $G=Q^{1/2}$, we have 
\begin{align*}
  A^\gamma Q_t^{-1/2}S(t)e_k=e^{-t\lambda_k}Q_t^{-1/2}A^\gamma e_k
  =e^{-t\lambda_k}\sqrt2 \lambda_k^{\frac12+\gamma} 
  \frac{1}{\sqrt{1-e^{-2\lambda_k t}}}G^{-1}e_k\,.
\end{align*}
Since $G^{-1}:V_{2\delta}\to H$ is an isomorphism, we deduce that
\begin{align*}
  \norm{A^\gamma Q_t^{-1/2}S(t)e_k}&\leq\norm{G^{-1}}_{\cL(V_{2\delta,H})}
  e^{-t\lambda_k}\sqrt2 \lambda_k^{\frac12+\gamma} 
  \frac{1}{\sqrt{1-e^{-2\lambda_k t}}}\norm{e_k}_{2\delta}\\
  &=\sqrt2\norm{G^{-1}}_{\cL(V_{2\delta,H})}
  e^{-t\lambda_k}\lambda_k^{\frac12+\delta+\gamma}\frac{1}{\sqrt{1-e^{-2\lambda_k t}}}\,,
\end{align*}
from which it follows that 
\[
  \norm{A^\gamma Q_t^{-1/2}S(t)}_{\cL(H,H)}\leq 
  \sqrt2\norm{G^{-1}}_{\cL(V_{2\delta,H})}\frac{1}{t^{\frac12+\delta+\gamma}}
  \left(\max_{r\geq0}\frac{r^{\frac12+\delta+\gamma}e^{-r}}{\sqrt{1-e^{-2r}}}\right)\,.
\]
Therefore, the desired conclusion follows.
\end{proof}

Thanks to the properties of the operator $Q_t$ stated in the previous lemma, we can deduce the following results on the semigroup $R$.
\begin{lem}
  \label{lem:OU}
  Assume \ref{H1}-\ref{H2}-\ref{H3}. Then, the semigroup $R$ is strong Feller. 
  More specifically, 
  there exists a constant $C_{R}>0$ such that,
  for every $t>0$ and $v\in\cB_b(H)$, it holds that 
  $R_tv\in C^\infty_b(H)$ and
  \begin{align}
    \label{SF1}
    \sup_{x\in H}|(R_tv)(x)| &\leq \sup_{x\in H}|v(x)|\,,\\
    \label{SF2}
    \sup_{x\in H}\|D(R_tv)(x)\|_H &\leq 
    \frac{C_{R}}{t^{\frac12+\delta}}\sup_{x\in H}|v(x)|\,.
  \end{align}
  Moreover, for every $v\in C^1_b(H)$, it holds that 
  \begin{align}
    \label{SF2_bis}
    \sup_{x\in H}\|D^2(R_tv)(x)\|_{\cL(H,H)} &\leq 
    \frac{C_{R}}{t^{\frac12+\delta}}\sup_{x\in H}\|Dv(x)\|_H\,.
  \end{align} 
  Furthermore, for every $\gamma\in (0, 1/2-\delta)$
  and $\eta\in(0, (1-2\delta)/(1+2\delta))$
   there exists constants
  $C_{R,\gamma},C_{R,\eta}>0$ such that, 
  for every $t>0$ and $v\in UC^0_b(H)$, it holds that 
  $D(R_tv)(H)\subset D(A^\gamma)$ and
  \begin{align}
  \label{SF3}
  \sup_{x\in H}\|A^\gamma D(R_tv)(x)\|_H &\leq 
    \frac{C_{R,\gamma}}{t^{\frac12+\delta+\gamma}}\sup_{x\in H}|v(x)|\,,\\
  \label{SF4}
  \norm{R_tv}_{C^{1,\eta}_b(H)} &\leq 
  \frac{C_{R,\eta}}{t^{(1+\eta)(\frac12+\delta)}}\sup_{x\in H}|v(x)|\,.
  \end{align}
\end{lem}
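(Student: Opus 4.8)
The plan is to derive every estimate from the Cameron--Martin differentiation formula for the Gaussian measures $N_{Q_t}$, whose admissibility is guaranteed precisely by the inclusion $S(t)(H)\subset Q_t^{1/2}(H)$ in \eqref{strong1}. Writing $\Lambda_t:=Q_t^{-1/2}S(t)\in\cL(H,H)$ and recalling that under $N_{Q_t}$ the map $y\mapsto\langle g,Q_t^{-1/2}y\rangle$ is, for each $g\in H$, a centred Gaussian variable of variance $\|g\|^2$ (the first Wiener chaos), I would first record, for bounded measurable $v$, the first-order identity
\[
(D(R_tv)(x),h)=\int_H v(y+S(t)x)\,\langle\Lambda_t h,Q_t^{-1/2}y\rangle\,N_{Q_t}(\d y),\qquad h\in H.
\]
Integrating by parts once more yields the second-order identity
\[
(D^2(R_tv)(x)k,h)=\int_H v(y+S(t)x)\Big[\langle\Lambda_t h,Q_t^{-1/2}y\rangle\,\langle\Lambda_t k,Q_t^{-1/2}y\rangle-(\Lambda_t h,\Lambda_t k)\Big]N_{Q_t}(\d y),
\]
and iterating the procedure gives $R_tv\in C^\infty_b(H)$, i.e.\ the strong Feller property; for this standard computation I would refer to \cite{dapratozab2}. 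Estimate \eqref{SF1} is immediate, since $N_{Q_t}$ is a probability measure.

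For \eqref{SF2} I would apply Cauchy--Schwarz in the first identity and use that the $L^2(N_{Q_t})$-norm of $y\mapsto\langle\Lambda_t h,Q_t^{-1/2}y\rangle$ equals $\|\Lambda_t h\|$, obtaining $|(D(R_tv)(x),h)|\le\sup_H|v|\,\|\Lambda_t\|_{\cL(H,H)}\|h\|$, and then invoke \eqref{strong2} with $\gamma=0$. For \eqref{SF3} I would exploit that $A^\gamma$ commutes with $S(t)$ and with $Q_t$, hence with $Q_t^{-1/2}$, because $Q=GG^*$ commutes with $A$ by \ref{H3}; applying the first identity to $A^\gamma h$ and using $Q_t^{-1/2}S(t)A^\gamma=A^\gamma Q_t^{-1/2}S(t)$ gives $|(A^\gamma D(R_tv)(x),h)|\le\sup_H|v|\,\|A^\gamma\Lambda_t\|_{\cL(H,H)}\|h\|$, so that \eqref{strong2} both closes the bound and shows $D(R_tv)(x)\in D(A^\gamma)$. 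For \eqref{SF2_bis}, since now $v\in C^1_b(H)$, I would differentiate once directly under the integral, which needs no integration by parts and gives $(D(R_tv)(x),h)=\int_H(Dv(y+S(t)x),S(t)h)\,N_{Q_t}(\d y)$, and only then integrate by parts in the remaining $x$-direction; Cauchy--Schwarz, $\|S(t)h\|\le\|h\|$, and \eqref{strong2} with $\gamma=0$ then yield the bound with $\sup_H\|Dv\|$ replacing $\sup_H|v|$.

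The genuinely delicate estimate is \eqref{SF4}, where $v$ is only bounded. Here I would first bound the full second derivative from the second-order identity: the degree-two Hermite factor in brackets has $L^2(N_{Q_t})$-norm controlled by $C\|\Lambda_t h\|\,\|\Lambda_t k\|$, so using \eqref{strong2} twice gives $\|D^2(R_tv)(x)\|_{\cL(H,H)}\le C\,t^{-(1+2\delta)}\sup_H|v|$. Combining this with the $C^0$-bound on $D(R_tv)$ from \eqref{SF2} through the interpolation inequality $[g]_{C^{0,\eta}}\le C\,\|g\|_{C^0_b}^{1-\eta}\,\|Dg\|_{C^0_b}^{\eta}$, applied to $g=D(R_tv)$, produces the time exponent $(\tfrac12+\delta)(1-\eta)+(1+2\delta)\eta=(1+\eta)(\tfrac12+\delta)$ together with the linear dependence on $\sup_H|v|$, which is exactly \eqref{SF4}; the constraint $\eta<(1-2\delta)/(1+2\delta)$ is precisely what forces this exponent below $1$. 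The step I expect to be the main obstacle is the rigorous justification of the two Cameron--Martin identities in infinite dimensions---differentiating under the Gaussian integral and identifying the resulting white-noise integrals, whose finite moments drive all the bounds---and, to a lesser extent, pinning down the interpolation exponent in \eqref{SF4} exactly.
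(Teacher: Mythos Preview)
Your proof is correct and follows exactly the route the paper takes: the paper simply cites \cite[Thm.~4]{DPF} and \cite[Thm.~6.2.2, Lem.~6.4.1]{dapratozab2}, which contain precisely the Cameron--Martin differentiation identities and the interpolation argument you have spelled out, all hinging on the bound \eqref{strong2} for $\|A^\gamma Q_t^{-1/2}S(t)\|_{\cL(H,H)}$. Your derivation of the time exponent in \eqref{SF4} via the elementary interpolation $[g]_{C^{0,\eta}}\le 2^{1-\eta}\|g\|_\infty^{1-\eta}\|Dg\|_\infty^{\eta}$ applied to $g=D(R_tv)$ is exactly the content of \cite[Lem.~6.4.1]{dapratozab2}.
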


\begin{proof}[Proof of Lemma~\ref{lem:OU}]
 The proof of \eqref{SF1}--\eqref{SF2_bis}
 follows from   
 \eqref{strong1}--\eqref{strong2} and \cite[Thm.~4]{DPF}.
 Moreover, \eqref{SF3} follows from 
 \cite[Thm.~6.2.2]{dapratozab2} and \eqref{strong2}.
 Finally, \eqref{SF4} follows from \cite[Lem.~6.4.1]{dapratozab2}.
\end{proof}

The following preliminary result will be crucial in the sequel. Even if it is rather classical 
(see, e.g., \cite{ito}), we present here 
a self contained proof for the reader's convenience. 
\begin{lem}
  \label{lem:dense}
  Assume \ref{H1}-\ref{H2}-\ref{H3}. Let $E\subset H$ be Borel-measurable
  with $N_{Q_\infty}(E)=1$.
  Then $E$ is dense in $H$. 
\end{lem}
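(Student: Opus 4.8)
The plan is to prove the equivalent statement that the topological support of $N_{Q_\infty}$ is all of $H$, i.e.\ that every nonempty open ball $B(x_0,r)\subset H$ satisfies $N_{Q_\infty}(B(x_0,r))>0$. Granting this, the density of $E$ is immediate by contradiction: if $E$ were not dense, there would be a ball $B(x_0,r)\subset H\setminus E$, whence $N_{Q_\infty}(B(x_0,r))\le N_{Q_\infty}(H\setminus E)=0$, contradicting positivity. The only structural input I would use is nondegeneracy of the covariance: since $G$ is positive self-adjoint with $\ker(G)=\{0\}$ and $Q_\infty=\frac12 A^{-1}Q$ with $Q=GG^*$, the operator $Q_\infty$ is injective, so $\ker(Q_\infty)=\{0\}$ and all its eigenvalues are strictly positive.

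First I would treat balls centred at the origin. Diagonalising $Q_\infty$ in an orthonormal eigenbasis $(e_k)_k$ with eigenvalues $\mu_k>0$, under $N_{Q_\infty}$ the coordinates $x_k:=(x,e_k)$ are independent real Gaussians with law $N(0,\mu_k)$, and $\sum_k\mu_k=\operatorname{Tr}(Q_\infty)<+\infty$. Fixing $r>0$ and writing $\|x\|^2=\sum_k x_k^2$, I would choose $N$ so large that $\sum_{k>N}\mu_k<r^2/4$; then Markov's inequality gives $N_{Q_\infty}(\sum_{k>N}x_k^2\ge r^2/2)\le 2r^{-2}\sum_{k>N}\mu_k<1/2$, while the finite-dimensional factor satisfies $N_{Q_\infty}(\sum_{k\le N}x_k^2<r^2/2)>0$ since a nondegenerate Gaussian on $\erre^N$ charges every nonempty open set. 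By independence of the two blocks of coordinates these two events have positive joint probability, and their intersection is contained in $\{\|x\|<r\}$, so $N_{Q_\infty}(B(0,r))>0$ for every $r>0$.

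It remains to move the centre, and this is the step I expect to be the crux. For $h$ in the Cameron--Martin space $Q_\infty^{1/2}(H)$, the Cameron--Martin theorem guarantees that the translated measure $N_{Q_\infty}(\cdot-h)$ is equivalent to $N_{Q_\infty}$; since $B(h,r)=h+B(0,r)$ and translation by $h$ carries the origin-centred ball onto the $h$-centred one, this equivalence together with the first step yields $N_{Q_\infty}(B(h,r))>0$. Finally, because $\ker(Q_\infty)=\{0\}$ the range $Q_\infty^{1/2}(H)$ is dense in $H$, so given an arbitrary centre $x_0\in H$ and radius $r>0$ I would pick $h\in Q_\infty^{1/2}(H)$ with $\|x_0-h\|<r/2$; then $B(h,r/2)\subset B(x_0,r)$ and hence $N_{Q_\infty}(B(x_0,r))\ge N_{Q_\infty}(B(h,r/2))>0$. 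This proves positivity of every open ball and completes the argument. The main obstacle is precisely the passage to arbitrary centres, which genuinely requires the quasi-invariance of the Gaussian measure under Cameron--Martin translations combined with the density of the Cameron--Martin space; the tail estimate at the origin, by contrast, is routine.
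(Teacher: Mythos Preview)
Your proof is correct, but it follows a genuinely different route from the paper's argument. The paper reproduces It\^o's original computation: assuming by contradiction that some ball $B(a,r)$ lies in $H\setminus\overline{E}$, it bounds the exponential moment $\int_H e^{-\frac\alpha2\|x-a\|^2}\,N_{Q_\infty}(\d x)\le e^{-\frac\alpha2 r^2}$, evaluates the left-hand side explicitly via the product factorisation $N_{Q_\infty}=\bigotimes_k N_{\zeta_k}$, splits the resulting infinite product at a suitably large index $\bar N$, and then sends $\alpha\to\infty$ to force a contradiction. Your approach instead establishes directly that the topological support of $N_{Q_\infty}$ is all of $H$, combining a tail-plus-finite-dimensional estimate for origin-centred balls with the Cameron--Martin theorem and the density of $Q_\infty^{1/2}(H)$ to reach arbitrary centres. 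Both arguments use the nondegeneracy $\ker Q_\infty=\{0\}$ at the decisive step---in the paper to guarantee $\zeta_k>0$ so that the one-dimensional factors are absolutely continuous, in yours to ensure the Cameron--Martin space is dense. The paper's route is entirely self-contained (no appeal to quasi-invariance), while yours is more modular and conceptual, packaging the translation step into the Cameron--Martin black box; either is perfectly adequate here.
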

\begin{proof}
  First, note that $Q_\infty\in\cL^1(H,H)$ is a positive operator with 
  $\operatorname{ker}(Q_\infty)=\{0\}$ thanks to \ref{H3}.
  Hence, there exists an orthonormal system $(q_k)_{k\in\enne}$ of $H$
  and a sequence of eigenvalues $(\zeta_k)_k$ such that $Q_\infty q_k=\zeta_k q_k$
  for every $k\in\enne$. In particular, it holds (see \cite[Thm.~1.2.1]{dapratozab2}) that  
  $N_{Q_\infty}$ is the restriction to $H$ (identified with $\ell^2$) of the product measure 
  on $(\erre^\infty, \cB(\erre^\infty))$ given by
  \[
  \bigotimes_{k\in\enne}N_{\zeta_k}\,,
  \]
  where $N_{\zeta_k}$ denotes a Gaussian measure on $\erre$
  with variance $\zeta_k$,  for every $k\in\enne$. 
  Note that the nondegeneracy condition $\operatorname{ker}(Q_\infty)=\{0\}$
  ensures that $\zeta_k>0$ for every $k\in\enne$, so that each $N_{\zeta_k}$
  is absolutely continuous with respect to the Lebesgue measure.
  Now, by contradiction, we suppose that there exists $a\in H$ and $r>0$ such that 
  \[
  \{x\in H:\|x-a\|_H<r\}\subset H\setminus \overline E\,,
  \]
  where $\overline E$ denotes the closure of $E$. Then, since $N_{Q_\infty}(E)=1$, one has that 
  \[
  \|x-a\|_H\geq r\,, \quad\text{for $N_{Q_\infty}$-a.e.~}x\in H\,.
  \]
  Let now $\alpha>0$ be fixed. It follows that 
  \[
  \int_H e^{-\frac\alpha2\|x-a\|_H^2}\,N_{Q_\infty}(\d x) \leq e^{-\frac\alpha2r^2}\,.
  \]
  The left hand side can be explicitly computed by using the factorisation of $N_{Q_\infty}$
  and the fact that $\zeta_k>0$ for every $k\in\enne$. More precisely, 
  setting $x_k:=(x,q_k)_H$ and $a_k:=(a,q_k)$ for every $k\in\enne$ and $x\in H$,
  after classical computations (see \cite{ito} or \cite[Prop.~1.2.8]{dapratozab2}), we have
  \begin{align*}
  \int_H e^{-\frac\alpha2\|x-a\|_H^2}\,N_{Q_\infty}(\,d x)
  &=\int_H\prod_{k\in\enne}e^{-\frac\alpha2(x_k-a_k)^2}\,N_{Q_\infty}(\,d x)\\
  &=\prod_{k\in\enne}\int_\erre e^{-\frac\alpha2(x_k-a_k)^2}\,N_{\zeta_k}(\,d x_k)\\
  &=\prod_{k\in\enne}\int_\erre 
  \frac1{\sqrt{2\pi\zeta_k}}e^{-\frac\alpha2(x_k-a_k)^2 - \frac1{2\zeta_k}x_k^2}\,\d x_k\\
  &=\prod_{k\in\enne}(1+\alpha\zeta_k)^{-\frac12}\exp\left(-\frac\alpha2\frac{a_k^2}{1+\alpha\zeta_k}\right)\,.
  \end{align*}
  Putting everything together, we deduce that 
  \beq\label{ito_aux1}
  \exp\left[\frac12\sum_{k\in\enne}\left(\ln(1+\alpha\zeta_k)+\frac{\alpha a_k^2}{1+\alpha\zeta_k}\right)\right]
  \geq e^{\frac\alpha2r^2}\,, \quad\forall\,\alpha>0\,.
  \eeq
  Now, for every $N\in\enne$, one has 
  \begin{align*}
  \exp\left[\frac12\sum_{k\geq N}\left(\ln(1+\alpha\zeta_k)+\frac{\alpha a_k^2}{1+\alpha\zeta_k}\right)\right]
  &\leq\exp\left[\frac\alpha2
  \sum_{k\geq N}\left(\zeta_k+a_k^2\right)\right]\,.
  \end{align*}
  Since $(\zeta_k)_k\in\ell^1$ and $(a_k)_k\in\ell^2$, we can choose and fix $\bar N\in\enne$, 
  independent of $\alpha$, such that 
  \[
  \sum_{k\geq \bar N}\left(\zeta_k+a_k^2\right) \leq \frac{r^2}2\,,
  \]
  so that by using \eqref{ito_aux1} we get
  \[
  \exp\left[\frac12\sum_{k<\bar N}\left(\ln(1+\alpha\zeta_k)+\frac{\alpha a_k^2}{1+\alpha\zeta_k}\right)\right]
  \geq e^{\frac\alpha4r^2}\,, \quad\forall\,\alpha>0\,.
  \]
  It follows that 
  \begin{align*}
  1&\leq \left[e^{-\frac\alpha4r^2}\prod_{k<\bar N}(1+\alpha\zeta_k)^{\frac12}\right]
  \exp\left(\frac\alpha2\sum_{k<\bar N}\frac{a_k^2}{1+\alpha\zeta_k}\right)\\
  &\leq \left[e^{-\frac\alpha4r^2}\prod_{k<\bar N}(1+\alpha\zeta_k)^{\frac12}\right]
  \exp\left(\frac12\sum_{k<\bar N}\frac{a_k^2}{\zeta_k}\right)\,, \quad\forall\,\alpha>0\,,
  \end{align*}
  so that letting $\alpha\to+\infty$ yields a contradiction, and this concludes the proof.
\end{proof}

We are now ready to state the definition of mild solution for the 
Kolmogorov equation \eqref{eq_kolm}.
For convenience, we set $\tilde B:H\to D(A^{-\beta})$ as
\[
  \tilde B(x):=
  \begin{cases}
  B(x) \quad&\text{if } x\in V_{2\alpha}\,,\\
  0 \quad&\text{if } x\in H\setminus V_{2\alpha}\,.
  \end{cases}
\]

\begin{defin}
  \label{def:sol_kolm}
  A mild solution to the Kolmogorov equation \eqref{eq_kolm} is a 
  function $u\in C^1_b(H)$ such that $Du\in C^0_b(H;D(A^\beta))$ and
  \[
  u(x)=\int_0^{\infty}e^{-\lambda t} 
  R_t[f+\langle\tilde B,Du\rangle](x)\,\d t \quad\forall\,x\in H\,.
  \]
\end{defin}

\begin{prop}
  \label{prop:kolm}
  Assume \ref{H1}-\ref{H2}-\ref{H3}.
  Then, there exists $\lambda_0 > 0$ , only depending on $A, B, G$,
   such that, for every $\lambda>\lambda_0$,
  there exists a unique mild solution to
  the Kolmogorov equation \eqref{eq_kolm} in  the sense of Definition~\ref{def:sol_kolm}.
\end{prop}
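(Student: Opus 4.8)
The plan is to construct the unique mild solution via the Banach fixed point theorem on the space
\[
\mathcal{X} := \{u \in C^1_b(H) : Du \in C^0_b(H; D(A^\beta))\}, \quad \norm{u}_{\mathcal{X}} := \sup_{x\in H}|u(x)| + \sup_{x\in H}\norm{A^\beta Du(x)},
\]
which is a Banach space (note that $Du = A^{-\beta}(A^\beta Du)$ is automatically bounded and continuous with values in $H$ since $A^{-\beta}\in\cL(H,H)$). For $u \in \mathcal{X}$ I set $g_u := f + \langle \tilde B, Du\rangle$. Since $B$ is bounded, writing $\norm{B}_\infty := \sup_{x}\norm{B(x)}_{D(A^{-\beta})}$, the duality estimate $|\langle \tilde B(x), Du(x)\rangle| \le \norm{B}_\infty \norm{A^\beta Du(x)}$ shows that $g_u \in \cB_b(H)$ with $\norm{g_u}_\infty \le \norm{f}_\infty + \norm{B}_\infty \norm{u}_{\mathcal{X}}$ and $\norm{g_{u_1} - g_{u_2}}_\infty \le \norm{B}_\infty \norm{u_1 - u_2}_{\mathcal{X}}$. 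I then define $\Gamma u(x) := \int_0^\infty e^{-\lambda t} R_t[g_u](x)\,\d t$, and the goal is to show that $\Gamma$ is a contraction on $\mathcal{X}$ once $\lambda$ is large enough.

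The main technical point is the estimate on $A^\beta D(\Gamma u)$, since the smoothing bound \eqref{SF3} with $\gamma=\beta$ (admissible precisely because $\delta+\beta<\frac12$ by \ref{H3}) is stated for inputs in $UC^0_b(H)$, whereas $g_u$ is in general only Borel. I resolve this by the semigroup splitting $R_t = R_{t/2}R_{t/2}$: by the strong Feller property of Lemma~\ref{lem:OU}, $R_{t/2}g_u \in C^\infty_b(H) \subset UC^0_b(H)$ with $\norm{R_{t/2}g_u}_\infty \le \norm{g_u}_\infty$ thanks to \eqref{SF1}, so that \eqref{SF3} applied to $R_{t/2}g_u$ gives
\[
\sup_{x\in H}\norm{A^\beta D(R_t g_u)(x)} \le \frac{C_{R,\beta}}{(t/2)^{\frac12+\delta+\beta}}\norm{g_u}_\infty\,, \quad t>0\,.
\]
Because $\frac12+\delta+\beta<1$, the map $t\mapsto e^{-\lambda t}t^{-(\frac12+\delta+\beta)}$ is integrable on $(0,\infty)$; differentiating under the integral (justified by the closedness of $A^\beta$ together with dominated convergence, using \eqref{SF2} for the plain gradient) yields $\Gamma u \in \mathcal{X}$ with
\[
\sup_{x\in H}\norm{A^\beta D(\Gamma u)(x)} \le c_\beta(\lambda)\norm{g_u}_\infty\,, \quad c_\beta(\lambda) := C_{R,\beta}\, 2^{\frac12+\delta+\beta}\int_0^\infty e^{-\lambda t}\, t^{-(\frac12+\delta+\beta)}\,\d t\,,
\]
and $c_\beta(\lambda)=C\lambda^{-(\frac12-\delta-\beta)}\to 0$ as $\lambda\to\infty$. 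The continuity of $A^\beta D(\Gamma u)$ in $x$ follows from the continuity of each $x\mapsto A^\beta D(R_t g_u)(x)$ and dominated convergence, while $\sup_x|\Gamma u(x)| \le \lambda^{-1}\norm{g_u}_\infty$ by \eqref{SF1}.

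Combining these bounds with the Lipschitz estimate on $u\mapsto g_u$ gives
\[
\norm{\Gamma u_1 - \Gamma u_2}_{\mathcal{X}} \le \left(\tfrac{1}{\lambda} + c_\beta(\lambda)\right)\norm{B}_\infty\, \norm{u_1 - u_2}_{\mathcal{X}}\,.
\]
Since both $\lambda^{-1}$ and $c_\beta(\lambda)$ vanish as $\lambda\to\infty$, there exists $\lambda_0>0$, depending only on $C_R, C_{R,\beta}$ (hence on $A,G$) and on $\norm{B}_\infty$, such that the prefactor is strictly less than $1$ for every $\lambda>\lambda_0$. The contraction mapping theorem then furnishes a unique $u\in\mathcal{X}$ with $u=\Gamma u$, i.e.~the unique mild solution of \eqref{eq_kolm} in the sense of Definition~\ref{def:sol_kolm}. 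I expect the only genuine obstacle to be the mismatch between the merely Borel regularity of $\langle\tilde B,Du\rangle$ and the $UC^0_b(H)$-hypothesis underlying \eqref{SF3}, which the semigroup splitting circumvents; the remaining steps are routine once the subcritical integrability $\frac12+\delta+\beta<1$ is secured by \ref{H3}.
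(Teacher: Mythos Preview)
Your proof is correct and follows essentially the same approach as the paper: a Banach fixed point argument on the space $\{u\in C^1_b(H): Du\in C^0_b(H;D(A^\beta))\}$, using the smoothing estimate \eqref{SF3} with $\gamma=\beta$ and the integrability condition $\frac12+\delta+\beta<1$ to obtain the contraction constant $\lambda^{-(\frac12-\delta-\beta)}$. The only notable difference is that you explicitly address, via the splitting $R_t=R_{t/2}R_{t/2}$, the fact that \eqref{SF3} is stated for $UC^0_b(H)$ inputs whereas $\langle\tilde B,Du\rangle$ is a priori only Borel-bounded; the paper applies the estimate directly without comment, so your version is if anything more careful on this point.
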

\begin{proof}
  We use a fixed point argument. To this end, we introduce the space
  \[
  \mathcal U:=\left\{v\in C^1_b(H):\quad Dv\in C^0_b(H;V_{2\beta})\right\}
  \]
  with norm
  \[
  \norm{v}_{\mathcal U}:=\norm{v}_{C^1_b(H)} + \norm{Dv}_{C^0_b(H;V_{2\beta})}\,,
  \quad v\in\mathcal U\,,
  \]
  and define the operator 
  $\mathcal T_\lambda:\mathcal U\to C^0_b(H)$ as follows
  \[
  \mathcal T_\lambda v(x):=\int_0^{\infty}e^{-\lambda t} 
  R_t[f+\langle\tilde B,Dv\rangle](x)\,\d t\,,
  \quad v\in \mathcal U\,.
  \]
  Since $\beta\in[0,\frac12-\delta)$, 
  by Lemma~\ref{lem:OU} we have
  \begin{align*}
  e^{-\lambda t} \|DR_t[f+\langle\tilde B,Dv\rangle](x)\|_{2\beta}
  &=e^{-\lambda t} \|A^\beta DR_t[f+\langle\tilde B,Dv\rangle](x)\|_H\\
  &\leq 
  C_R\frac{e^{-\lambda t}}{t^{\frac12+\delta+\beta}}
  \sup_{y\in H}|f(y)+\langle\tilde B(y),Dv(y)\rangle|\\
  &\leq C_R\left(\|f\|_{C^0_b(H)} + 
  \sup_{y\in V_{2\alpha}}\norm{B(y)}_{-2\beta}
  \norm{v}_{\mathcal U}\right)
  \frac{e^{-\lambda t}}{t^{\frac12+\delta+\beta}}\,.
  \end{align*}
  Hence, by the dominated convergence theorem, we can differentiate 
  under integral sign, and we obtain that $\mathcal T_\lambda$ 
  is well-defined as an operator from $\mathcal U$ to $\mathcal U$ and 
  \[
  D(\mathcal T_\lambda v)(x)=
  \int_0^{\infty}e^{-\lambda t} DR_t[f+\langle\tilde B,Dv\rangle](x)\,\d t\,,
  \quad v\in \mathcal U\,.
  \]
  Moreover, by analogous computations, for every $v_1,v_2\in \mathcal U$, we have 
  \[
  |\mathcal T_\lambda v_1(x) - \mathcal T_\lambda v_2(x)|
  \leq \frac1\lambda\sup_{y\in V_{2\alpha}}\norm{B(y)}_{-2\beta} 
  \norm{v_1-v_2}_{\mathcal U} \,,
  \]
  and 
  \begin{align*}
  \|D\mathcal T_\lambda v_1(x) - D\mathcal T_\lambda v_2(x)\|_{2\beta}&\leq
  C_R\sup_{y\in V_{2\alpha}}\norm{B(y)}_{-2\beta}
  \norm{v_1-v_2}_{\mathcal U}
  \int_0^{+\infty}e^{-\lambda t} \frac1{t^{\frac12+\delta+\beta}}\,\d t\\
  &=\frac{C_R}{\lambda^{\frac12-\delta-\beta}}\Gamma\left(\frac12-\delta-\beta\right)
  \sup_{y\in V_{2\alpha}}\norm{B(y)}_{-2\beta}\norm{v_1-v_2}_{\mathcal U}\,.
  \end{align*}
  Hence, provided that $\lambda$ is chosen sufficiently large 
  (only depending on $B$, $\delta$, $\beta$, $R$) we infer that 
  $\mathcal T_\lambda: \mathcal U\to \mathcal U$ is a contraction. Hence, the 
  thesis follows by the classical Banach fixed point theorem.
\end{proof}

From now on, we fix $\lambda_0>0$ as in Proposition~\ref{prop:kolm}.

In order to prove uniqueness in law, we need a further regularisation 
of the solution $u$ to the Kolmogorov equation, which is at least of class $C^2_b(H)$
so that a suitable It\^o formula can be applied. 
The idea is to consider an approximated version of the Kolmogorov equation 
for which one is able to prove existence of strong solutions. To this end, we introduce a linear self-adjoint negative definite operator $C$
with $C^{-1}\in\cL^1(H,H)$ and denote by $(T(t))_{t\geq0}$ the associated linear semigroup on $H$. Then,
for every $\eps>0$, we define the regularised operator 
\begin{equation*}
  B_\eps:H\to H\,, \qquad
  B_\eps(x):=\int_H T(\eps)\tilde B(T(\eps)x+y)\,N_{\frac12C^{-1}(T(\eps)-I)}(\d y)\,,
  \quad x\in H\,,
\end{equation*}
and we also define 
\begin{equation*}
  f_\eps:H\to \erre\,, \qquad
  f_\eps(x):=\int_H f(T(\eps)x+y)\,N_{\frac12C^{-1}(T(\eps)-I)}(\d y)\,, 
  \quad x\in H\,,
\end{equation*}
where, as usual, $\smash{N_{\frac12C^{-1}(T(\eps)-I)}}$ denotes the Gaussian measure with mean $0$ and covariance operator $\smash{\frac12C^{-1}(T(\eps)-I)}$. Moreover, since $C^{-1}\in\cL^1(H,H)$, it follows (see \cite[Sec.~6.2]{dapratozab2}) that 
$B_\eps\in C^\infty_b(H;H)$ and $f_\eps\in C^\infty_b(H)$ for every $\eps>0$, and 
\beq
  \label{conv_eps_Bf}
  \lim_{\eps\to0^+}
  \|B_\eps(x)-\tilde B(x)\|_{-2\beta}
  =0
  \quad\forall\,x\in H\,,
  \qquad
  \lim_{\eps\to0^+}
  \|f_\eps-f\|_{C^0_b(H)}=0\,.
\eeq

Now, for every $\eps>0$, we consider the regularised Kolmogorov equation 
\beq
 \label{eq_kolm_reg}
 \lambda u_\eps(x) + Lu_\eps(x) = f_\eps(x) + (B_\eps(x),Du_\eps(x))\,, \quad x\in D(A)\,.
\eeq
\begin{defin}
  \label{def:sol_kolm_reg}
  Let $\eps>0$.
  A mild solution to the regularised Kolmogorov equation \eqref{eq_kolm_reg} is a 
  function $u_\eps\in C^1_b(H)$ such that 
  \[
  u_\eps(x)=\int_0^{\infty}e^{-\lambda t} 
  R_t[f_\eps+(B_\eps,Du_\eps)](x)\,\d t\,, \quad\forall\,x\in H\,.
  \]
  A strong solution to the regularised Kolmogorov equation \eqref{eq_kolm_reg} is a 
  function $u_\eps\in C^2_b(H)$ such that 
  $QD^2u_\eps(x)\in\cL^1(H,H)$ for all $x\in H$ and
  \[
  \lambda u_\eps(x)
  -\frac12\operatorname{Tr}(QD^2u_\eps(x)) + (Ax, Du_\eps(x))
  = f_\eps(x) + (B_\eps(x),Du_\eps(x))\,, \quad \forall\,x\in D(A)\,.
  \]
\end{defin}

\begin{prop}
  \label{prop:kolm_reg}
  Assume \ref{H1}-\ref{H2}-\ref{H3} and
  let $\eps>0$. Then, for every $\lambda>\lambda_0$
  there exists a unique strong solution
  $u_\eps$ to
  the regularised Kolmogorov equation \eqref{eq_kolm_reg}, 
  which is also a mild solution in  the sense of Definition~\ref{def:sol_kolm_reg},
  and it satisfies 
  \beq\label{cont_L1}
  Du_\eps\in C^0_b(H; D(A^\beta))\,,
  \qquad QD^2u_\eps \in C^0_b(H; \cL^1(H,H))\,.
  \eeq
\end{prop}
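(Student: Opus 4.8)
The plan is to first obtain the mild solution by repeating the contraction argument of Proposition~\ref{prop:kolm} with the regularised coefficients, and then to upgrade its regularity to that of a strong solution; the genuinely delicate point will be the trace-class property of $QD^2u_\eps$.

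First I would run the fixed point of Proposition~\ref{prop:kolm} on the same space $\mathcal U$, with $\mathcal T_\lambda v(x)=\int_0^\infty e^{-\lambda t}R_t[f_\eps+(B_\eps,Dv)](x)\,\d t$. Although $B_\eps$ now maps $H$ into $H$, the constant governing the contraction is still controlled by $\sup_x\|B_\eps(x)\|_{-2\beta}$, and one checks (using that $B_\eps$ is built from $\tilde B$ by convolution with the contraction $T(\eps)$, compatible with the fractional powers of $A$) that $\sup_x\|B_\eps(x)\|_{-2\beta}\le\sup_{z\in V_{2\alpha}}\|B(z)\|_{-2\beta}$ uniformly in $\eps$. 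Hence the estimates of Proposition~\ref{prop:kolm} go through with the very same threshold $\lambda_0$, producing for each $\lambda>\lambda_0$ a unique mild solution $u_\eps\in\mathcal U$; in particular $Du_\eps\in C^0_b(H;D(A^\beta))$, which is the first part of \eqref{cont_L1}. Note that $(B_\eps,Du_\eps)$ coincides with $\langle B_\eps,Du_\eps\rangle$, since $B_\eps(x)\in H\subset D(A^{-\beta})$ and $Du_\eps(x)\in D(A^\beta)$.

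Next I would bootstrap regularity. Feeding \eqref{SF4} into the mild formula gives $u_\eps\in C^{1,\eta}_b(H)$ for some $\eta\in(0,\tfrac{1-2\delta}{1+2\delta})$, so that $g_\eps:=f_\eps+(B_\eps,Du_\eps)\in C^{0,\eta}_b(H)$; here the $C^\infty_b$ smoothness of $f_\eps$ and $B_\eps$ (from \cite[Sec.~6.2]{dapratozab2}) is essential. With Hölder right-hand side and smooth coefficients, the Schauder-type regularity theory for Ornstein--Uhlenbeck Kolmogorov equations (\cite[Ch.~6]{dapratozab2}) then upgrades $u_\eps$ to $C^2_b(H)$ and yields the pointwise identity \eqref{eq_kolm_reg} on $D(A)$, i.e.\ $u_\eps$ is a strong solution. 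Conversely, any strong solution is mild by a variation-of-constants computation (integrating $\tfrac{\d}{\d t}R_tu_\eps=R_tLu_\eps$ against $e^{-\lambda t}$), so uniqueness of the strong solution follows from uniqueness of the mild one.

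The main obstacle is the remaining claim $QD^2u_\eps\in C^0_b(H;\cL^1(H,H))$, which must hold even though $Q\notin\cL^1(H,H)$. Differentiating the mild formula twice (now legitimate) gives $QD^2u_\eps(x)=\int_0^\infty e^{-\lambda t}QD^2R_tg_\eps(x)\,\d t$, so it suffices to prove that $QD^2R_t\phi(x)\in\cL^1(H,H)$ for bounded $\phi$, with an $x$-uniform $\cL^1$-bound that is integrable in $t$ against $e^{-\lambda t}$. This is precisely \cite[Prop.~6.4.2]{dapratozab2}, whose hypothesis is our assumption \eqref{eq:L4}: the quantity $\|Q_t^{-1/2}S(t)G\|_{\cL^4}^{2(1-\vartheta)}\|Q_t^{-1/2}S(2t)Q\|_{\cL^2}^{\vartheta}$ controls $\|QD^2R_t\phi\|_{\cL^1}$ (up to $\sup|\phi|$), and \eqref{eq:L4} makes its $e^{-\lambda t}$--weighted integral finite. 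Continuity of $x\mapsto QD^2u_\eps(x)$ in $\cL^1(H,H)$ then follows by dominated convergence, and the trace in \eqref{eq_kolm_reg} is thereby well defined. This is the step for which condition \eqref{eq:L4} was designed, and it is where I expect the real work to lie.
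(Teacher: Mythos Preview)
Your proposal is correct and follows essentially the same route as the paper: fixed point in $\mathcal U$, Schauder bootstrap via \cite[Prop.~6.4.2]{dapratozab2} to reach $C^2_b$, and then the $\cL^1$-continuity of $QD^2u_\eps$ from the integrated estimate \eqref{eq:L4}. Two small imprecisions are worth flagging. First, a single application of the Schauder estimate gives only $u_\eps\in C^{\eta+\frac{2}{1+2\delta}}_b(H)$, which falls short of $C^2_b$ when $\delta$ is close to $\tfrac12$; the paper iterates the bootstrap (each pass improving the H\"older exponent of $g_\eps$) until the regularity saturates at $C^{1+\frac{2}{1+2\delta}}_b\subset C^2_b$. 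Second, the interpolated $\cL^1$-bound from \cite[Prop.~6.4.2]{dapratozab2} is controlled by $\|g_\eps\|_{C^{0,\vartheta}_b}$, not by $\sup|g_\eps|$ as you wrote; this is harmless since you already have $g_\eps\in C^{0,\eta}_b(H)$, but it is the reason the H\"older step is needed before invoking \eqref{eq:L4}. For the $\cL^1$-continuity in $x$, the paper makes the pointwise-in-$t$ continuity of $QD^2R_tg_\eps(x)$ explicit via the representation $GD^2(R_tg_\eps)(x)G=Q_t^{-1/2}S(t)G\,\Psi_{g_\eps(S(t)x+\cdot)}(t)\,Q_t^{-1/2}S(t)G$ with $\Psi$ linear bounded from $C^0_b(H)$ to $\cL^2(H,H)$, which is what justifies your dominated-convergence step.
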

\begin{proof}
 First of all, we note that
 the same proof of Proposition~\ref{prop:kolm} provides 
 also a sequence $(u_\eps)_\eps\subset C^1_b(H)$ of mild solutions
 to the regularised Kolmogorov equation in the sense of 
 Definition~\ref{def:sol_kolm_reg}, with 
 $(Du_\eps)_\eps\subset C^0_b(H; D(A^\beta))$. In particular, one has that 
   \[
  u_\eps(x)=\int_0^{\infty}e^{-\lambda t} R_t[f_\eps
  +(B_\eps,Du_\eps)](x)\,\d t \,, \quad\forall\,x\in H\,.
  \]
  and 
  \[
  Du_\eps(x) = \int_0^{\infty}e^{-\lambda t} DR_t[f_\eps
  +(B_\eps,Du_\eps)](x)\,\d t \,, \quad\forall\,x\in H\,.
  \]
  Now, note that 
  for every $\eta\in(0, (1-2\delta)/(1+2\delta))$, thanks to \eqref{SF4} one has,
  for all $t>0$, that
  \begin{align*}
  &e^{-\lambda t}\norm{DR_t[f_\eps + (B_\eps,Du_\eps)]}_{C^{0,\eta}_b(H)}\\
  &\qquad\leq \frac{C_{R,\eta}}{t^{(1+\eta)(\frac12+\delta)}}e^{-\lambda t}
  \left(\norm{f}_{C^0_b(H)} + 
  \norm{B_\eps}_{C^0_b(H;H)}
  \norm{Du_\eps}_{C^0_b(H;H)}\right) \,,
  \end{align*}
  where $(1+\eta)(\frac12+\delta)\in (0,1)$. Hence, 
  the dominated convergence theorem
  yields $(u_\eps)_\eps\subset C^{1,\eta}_b(H)$.
 In order to show that $(u_\eps)_\eps$ are actually strong solutions 
in the sense of Definition~\ref{def:sol_kolm_reg}, we
use hypothesis \ref{H3} and
apply the result in \cite[Prop.~6.4.2]{dapratozab2}. To this end, 
we only need to check that 
\[
  x\mapsto f_\eps(x) + (B_\eps(x), Du_\eps(x))
\]
is H\"older continuous and bounded in $H$. 
Clearly, this is true for $f_\eps$.
As far as the second term is concerned, 
since $(u_\eps)_\eps\subset C^{1,\eta}(H)$ for all $\eta\in(0, (1-2\delta)/(1+2\delta))$,
recalling that $B_\eps\in C^1_b(H;H)$, 
for every $x,y\in H$ we have
\begin{align*}
  &\left|(B_\eps(x), Du_\eps(x))- (B_\eps(y), Du_\eps(y))\right|\\
  &\qquad\leq\left|(B_\eps(x), Du_\eps(x)- Du_\eps(y))\right|+
  \left|(B_\eps(x)-B_\eps(y), Du_\eps(y))\right|\\
  &\qquad\leq\norm{B_\eps}_{C^0_b(H;H)}
  \norm{u_\eps}_{C^{1,\eta}(H)}
  \norm{x-y}^\eta + 
  \norm{B_\eps}_{C^{0,1}(H;H)}
  \norm{u_\eps}_{C^{1}_b(H)}\norm{x-y}\\
  &\qquad\leq C_\eps\left(\norm{x-y}^\eta + \norm{x-y}\right)\,.
\end{align*}
This shows indeed that $x\mapsto f_\eps(x) + (B_\eps(x), Du_\eps(x))$
is H\"older continuous for every $\eps>0$.
Consequently, 
by \cite[Prop.~6.4.2]{dapratozab2} 
we deduce that, for every $\eps>0$, it holds also that 
\[
u_\eps\in C^{\eta+\frac{2}{1+2\delta}}_b(H)\,, \quad\forall\,\eta\in (0, (1-2\delta)/(1+2\delta))\,.
\]
Since $\frac{1-2\delta}{1+2\delta}+\frac{2}{1+2\delta}=1+2\frac{1-2\delta}{1+2\delta}$, 
this implies that 
\[
  u_\eps\in C^{1,\sigma}_b(H)\,, \quad\forall\,\sigma\in (0, 2(1-2\delta)/(1+2\delta))\,.
\]
By a standard bootstrap argument, analogous computations show that 
\[
x\mapsto f_\eps(x) + (B_\eps(x), Du_\eps(x)) \in C^{0,\sigma}(H)\,,
\quad\forall\,\sigma\in (0, 2(1-2\delta)/(1+2\delta))\,,
\]
and applying again \cite[Prop.~6.4.2]{dapratozab2}
we can refine the regularity of $u_\eps$ as
\[
u_\eps\in C^{\sigma+\frac{2}{1+2\delta}}_b(H)\,, 
\quad\forall\,\sigma\in (0, 2(1-2\delta)/(1+2\delta))\,.
\]
By noticing that $2\frac{1-2\delta}{1+2\delta}+\frac{2}{1+2\delta}
=\frac{4-4\delta}{1+2\delta}=
1+3\frac{1-2\delta}{1+2\delta}$, an iteration argument shows that 
\[
u_\eps\in C^{\ell_k}_b(H)\,, \quad\forall\,k\in\enne\,,
\]
where
\[
  \ell_k=1+\min\left\{k\frac{1-2\delta}{1+2\delta}, \frac2{1+2\delta}\right\}\,, \quad k\in\enne\,.
\]
Hence, for $k=k(\delta)$ sufficiently large one obtains the optimal regularity
\[
u_\eps\in C^{1+\frac2{1+2\delta}}(H)\,.
\]
Noting that $1+\frac2{1+2\delta}>2$
for all $\delta\in(0,\frac12)$,
this implies in particular that $u_\eps\in C^2_b(H)$.
Moreover, hypothesis \eqref{eq:L4} and
the same result \cite[Prop.~6.4.2]{dapratozab2} ensure also that
$QD^2u_\eps(x)\in\cL^1(H,H)$ for all $x\in H$, 
so that $u_\eps$ is a strong solution to the regularised 
Kolmogorov equation in the sense of 
Definition~\ref{def:sol_kolm_reg}.\\
Eventually, we show that, for every $\eps>0$, it holds that 
$QD^2u_\eps \in C^0_b(H;\cL^1(H,H))$. To this end, 
for every $\varphi\in\cB_b(H)$ and $t>0$, 
we introduce the linear bounded operator $\Psi_\varphi(t)\in\cL(H,H)$ as
\[
  (\Psi_\varphi(t) h, k):=\int_H
  \left[(h, Q_t^{-\frac12}y)(k, Q_t^{-\frac12}y)-(h,k)\right]
  \varphi(y)\,N_{Q_t}(\d y)\,, \quad h,k\in H\,.
\]
By \cite[Lem.~6.2.7]{dapratozab2} it holds that 
\[
\Psi_\varphi(t)\in\cL^2(H,H)\,, \qquad
\norm{\Psi_\varphi(t)}_{\cL^2(H,H)}\leq 2\sup_{x\in H}|\varphi(x)|
\quad\forall\,\varphi\in\cB_b(H)\,, \quad\forall\,t>0\,.
\]
By linearity, this shows in particular 
that for every $t>0$, 
$\varphi\mapsto \Psi_\varphi(t)\in \cL(C^0_b(H); \cL^2(H,H))$. 
Now, let us set for convenience $\varphi_\eps:=f_\eps+(B_\eps,Du_\eps)\in C^0_b(H)$. By Proposition~\cite[Prop.~6.2.8]{dapratozab2} one has that,
for every $t>0$, 
\[
  GD^2(R_t\varphi_\eps)(x)G
  =
  Q_t^{-\frac12}S(t)G\Psi_{\varphi_\eps(S(t)x+\cdot)}(t)
  Q_t^{-\frac12}S(t)G \quad\forall\,x\in H\,.
\]
We note that for every $(x_n)_n,x\subset H$ such that $x_n\to x$ in $H$, 
since $\varphi_\eps\in C^1_b(H)$ it holds that 
$\varphi_\eps(S(t)x_n+\cdot)\to \varphi_\eps(S(t)x+\cdot)$ in $C^0_b(H)$.
Hence, recalling that $\varphi\mapsto \Psi_\varphi(t)\in \cL(C^0_b(H); \cL^2(H,H))$
one has that 
\[
  x\mapsto \Psi_{\varphi_\eps(S(t)x+\cdot)}(t)
  \in C^0(H; \cL^2(H,H))\,,
\]
which in turn implies, thanks to the fact that 
that $Q_t^{-\frac12}S(t)G\in \cL^4(H,H)$ 
for almost every $t>0$ by \ref{H3}, that 
\[
  x\mapsto Q_t^{-\frac12}S(t)G\Psi_{\varphi_\eps(S(t)x+\cdot)}(t)
  Q_t^{-\frac12}S(t)G \in C^0(H; \cL^1(H,H))
  \quad\text{for a.e.~}t>0\,.
\]
This implies that
\[
  x\mapsto GD^2(R_t\varphi_\eps)(x)G \in C^0(H; \cL^1(H,H))
  \quad\text{for a.e.~}t>0\,.
\]
Moreover, we note that 
\[
  \norm{GD^2(R_t\varphi_\eps)(x)G}_{\cL^1(H,H)}
  \leq 2\norm{Q_t^{-\frac12}S(t)G}^2_{\cL^4(H,H)}\norm{\varphi_\eps}_{C^0_b(H)}
  \quad\text{for a.e.~}t>0\,.
\]
Moreover, by \cite[Prop.~6.2.9]{dapratozab2} and the fact that 
$\varphi_\eps\in UC_b^1(H)$ one has also that 
\[
  \norm{GD^2(R_t\varphi_\eps)(x)G}_{\cL^1(H,H)}
  \leq \norm{Q_t^{-\frac12}S(2t)Q}_{\cL^2(H,H)}\norm{\varphi_\eps}_{C^1_b(H)}
  \quad\text{for a.e.~}t>0\,.
\]
By interpolation (see \cite[Prop.~6.4.2]{dapratozab2}) we infer that 
\[
  \norm{GD^2(R_t\varphi_\eps)(x)G}_{\cL^1(H,H)}
  \leq 2^{1-\vartheta}\norm{Q_t^{-\frac12}S(t)G}^{2(1-\vartheta)}_{\cL^4(H,H)}
  \norm{Q_t^{-\frac12}S(2t)Q}_{\cL^2(H,H)}^\vartheta
  \norm{\varphi_\eps}_{C^{0,\vartheta}_b(H)}
\]
for almost every $t>0$. Thanks to assumption \ref{H3},
the dominated convergence theorem, and the fact that $u_\eps$
is a mild solution to the regularised Kolmogorov equation,  this implies that 
\[
  x\mapsto QD^2u_\eps(x) \in C^0_b(H; \cL^1(H,H))\,,
\]
and this concludes the proof.
\end{proof}

\begin{prop}
  \label{prop:kolm_reg_est}
  Assume \ref{H1}-\ref{H2}-\ref{H3}. 
  Let $\lambda>\lambda_0$,  
  and let $(u_\eps)_\eps\subset C^2_b(H)$
  be the strong solutions to the Kolmogorov equation
  \eqref{eq_kolm_reg}.
  Then, there exists a constant $C>0$, 
  only depending on $\lambda, \beta, A, B, G, f$, such that
  \begin{align}
  \label{est1_eps}
  \norm{u_\eps}_{C^1_b(H)} + \norm{Du_\eps}_{C^0_b(H; D(A^\beta))}&\leq C\,.
  \end{align}
  Moreover, for every 
  $\eta\in(0,(1-2\delta)/(1+2\delta))$,
  there exists a constant
  $C_{\eta}>0$, only depending on $\lambda, \beta, A, B, G, f$, such that
  \begin{align}
  \label{est3_eps}
  \norm{u_\eps}_{C^{1,\eta}_b(H)}&\leq C_\eta\,. 
  \end{align}
\end{prop}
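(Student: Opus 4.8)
The plan is to read all three bounds directly off the mild representations of $u_\eps$ and $Du_\eps$ furnished by Proposition~\ref{prop:kolm_reg}, feeding them into the smoothing estimates of Lemma~\ref{lem:OU} and making the contraction/absorption argument of Proposition~\ref{prop:kolm} uniform in $\eps$. Throughout, set $\varphi_\eps:=f_\eps+(B_\eps,Du_\eps)\in C^0_b(H)$, so that
\[
u_\eps(x)=\int_0^{\infty}e^{-\lambda t}R_t\varphi_\eps(x)\,\d t\,,\qquad
Du_\eps(x)=\int_0^{\infty}e^{-\lambda t}DR_t\varphi_\eps(x)\,\d t\,.
\]
The whole argument reduces to controlling $\|\varphi_\eps\|_{C^0_b(H)}$ uniformly in $\eps$ and then integrating the relevant kernel estimate in $t$.

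First I would establish the $\eps$-uniform control of the data. Since $f_\eps$ and $B_\eps$ are Gaussian averages of $f$ and $\tilde B$ precomposed and postcomposed with the contraction $T(\eps)$, Jensen's inequality gives $\|f_\eps\|_{C^0_b(H)}\leq\|f\|_{C^0_b(H)}$ and, choosing $C$ compatibly with $A$ so that $T(\eps)$ acts as a contraction on $D(A^{-\beta})$ as well,
\[
\sup_{x\in H}\|B_\eps(x)\|_{-2\beta}\leq\sup_{y\in V_{2\alpha}}\|B(y)\|_{-2\beta}=:\|B\|_\infty<+\infty\,,
\]
uniformly in $\eps$. Because $B_\eps(x)\in H\embed D(A^{-\beta})$ while $Du_\eps(x)\in D(A^\beta)$, the $H$-inner product coincides with the duality pairing, whence the pointwise bound $|(B_\eps(x),Du_\eps(x))|\leq\|B_\eps(x)\|_{-2\beta}\|Du_\eps(x)\|_{2\beta}$ and therefore $\|\varphi_\eps\|_{C^0_b(H)}\leq\|f\|_{C^0_b(H)}+\|B\|_\infty\,\|Du_\eps\|_{C^0_b(H;D(A^\beta))}$.

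For \eqref{est1_eps} I would apply $A^\beta D$ to the mild formula and use \eqref{SF3} with $\gamma=\beta$ (or \eqref{SF2} when $\beta=0$), which is licit since $\beta\in[0,\tfrac12-\delta)$. Writing $M_\eps:=\|Du_\eps\|_{C^0_b(H;D(A^\beta))}$, this yields
\[
M_\eps\leq C_{R,\beta}\left(\|f\|_{C^0_b(H)}+\|B\|_\infty M_\eps\right)\int_0^\infty e^{-\lambda t}t^{-(1/2+\delta+\beta)}\,\d t
=\frac{C_{R,\beta}\,\Gamma(\tfrac12-\delta-\beta)}{\lambda^{1/2-\delta-\beta}}\left(\|f\|_{C^0_b(H)}+\|B\|_\infty M_\eps\right)\,,
\]
the integral converging precisely because $\tfrac12+\delta+\beta<1$. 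For $\lambda>\lambda_0$ the coefficient of $M_\eps$ is strictly less than $1$ (this is exactly the condition defining $\lambda_0$ in Proposition~\ref{prop:kolm}), so it can be absorbed on the left to give an $\eps$-independent bound on $M_\eps$. The bound $\sup_x|u_\eps(x)|\leq\lambda^{-1}\|\varphi_\eps\|_{C^0_b(H)}$ follows from \eqref{SF1}, and $\|Du_\eps\|_{C^0_b(H;H)}\leq C M_\eps$ by the continuous embedding $D(A^\beta)\embed H$; together these prove \eqref{est1_eps}. For \eqref{est3_eps}, with $\eta\in(0,(1-2\delta)/(1+2\delta))$ I would insert \eqref{SF4} into the mild formula and pass the $C^{1,\eta}_b(H)$-norm under the integral,
\[
\|u_\eps\|_{C^{1,\eta}_b(H)}\leq C_{R,\eta}\,\|\varphi_\eps\|_{C^0_b(H)}\int_0^\infty e^{-\lambda t}t^{-(1+\eta)(1/2+\delta)}\,\d t\,,
\]
the integral being finite exactly because $(1+\eta)(\tfrac12+\delta)<1$ on this range of $\eta$; since $\|\varphi_\eps\|_{C^0_b(H)}$ is already controlled uniformly, this closes \eqref{est3_eps}.

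The only genuinely delicate point is the $\eps$-uniform bound $\sup_x\|B_\eps(x)\|_{-2\beta}\leq\|B\|_\infty$: it rests on the regularising semigroup $T$ being chosen to commute with $A$, so that $T(\eps)$ contracts the $D(A^{-\beta})$-norm and the Gaussian average does not increase it, after which Jensen's inequality closes the argument. Note that \eqref{conv_eps_Bf} alone yields only pointwise information, so it is this structural bound that makes the estimates uniform; everything else is a verbatim, $\eps$-uniform rerun of the kernel integrations already carried out in Propositions~\ref{prop:kolm} and~\ref{prop:kolm_reg}.
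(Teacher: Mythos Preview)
Your proposal is correct and follows essentially the same approach as the paper's own proof: both use the mild representation of $u_\eps$ and $Du_\eps$, the uniform bound $\sup_x\|B_\eps(x)\|_{-2\beta}\leq\sup_{y\in V_{2\alpha}}\|B(y)\|_{-2\beta}$ coming from the contraction property of $T(\eps)$ on $V_{-2\beta}$, then apply \eqref{SF3} with $\gamma=\beta$ and absorb via the choice $\lambda>\lambda_0$, and finally \eqref{SF4} for the H\"older estimate. You are in fact slightly more explicit than the paper in isolating the $\sup|u_\eps|$ bound via \eqref{SF1} and in flagging that the contraction of $T(\eps)$ on $V_{-2\beta}$ relies on a compatibility between $C$ and $A$, which the paper uses as a stated fact.
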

\begin{proof}
  We first note that 
  \[
  u_\eps(x)=\int_0^{\infty}e^{-\lambda t} R_t[f_\eps
  +(B_\eps,Du_\eps)](x)\,\d t \,, \quad\forall\,x\in H\,.
  \]
  Moreover, arguing as in the proof of Proposition~\ref{prop:kolm}, one has that 
  \beq
  \label{Du_eps}
  Du_\eps(x) = \int_0^{\infty}e^{-\lambda t} DR_t[f_\eps
  +(B_\eps,Du_\eps)](x)\,\d t \,, \quad\forall\,x\in H\,.
  \eeq
  Now, by \eqref{SF2}, \eqref{SF3},
  the properties of convolution, the fact that $T$ is a contraction on $V_{-2\beta}$,
  and the dominated convergence theorem,
  it follows
  that 
  \begin{align*}
  \norm{A^\beta Du_\eps}_{C^0_b(H;H)}&\leq
  \int_0^\infty e^{-\lambda t}\frac{C_R}{t^{\frac12+\delta+\beta}}
  \sup_{x\in H}\left(|f_\eps(x)| + \norm{B_\eps(x)}_{-2\beta}
  \norm{Du_\eps(x)}_{2\beta}\right)\,\d t\\
  &\leq C_R\left(\norm{f}_{C^0_b(H)} + 
  \sup_{y\in V_{2\alpha}}\norm{B(y)}_{-2\beta}
  \norm{Du_\eps}_{C^0_b(H;V_{2\beta})}\right)
  \int_0^\infty e^{-\lambda t}\frac{1}{t^{\frac12+\delta+\beta}}\,\d t\\
  &\leq \frac{C_R}{\lambda^{\frac12-\delta-\beta}}
  \Gamma\left(\frac12-\delta-\beta\right)
  \left(\norm{f}_{C^0_b(H)} + 
  \sup_{y\in V_{2\alpha}}\norm{B(y)}_{-2\beta}
  \norm{Du_\eps}_{C^0_b(H;V_{2\beta})}\right)\,.
  \end{align*}
  Now, noting that the choice $\lambda>\lambda_0$ yields 
  \[
  \frac{C_R}{\lambda^{\frac12-\delta-\beta}}
  \Gamma\left(\frac12-\delta-\beta\right)
  \sup_{y\in V_{2\alpha}}\norm{B(y)}_{-2\beta}<1\,,
  \]
  the first estimate \eqref{est1_eps} follows. 
  Eventually, 
  for every $\eta\in(0, (1-2\delta)/(1+2\delta))$, by virtue of \eqref{SF4} one has,
  for all $t>0$,
  \begin{align*}
  &e^{-\lambda t}\norm{DR_t[f_\eps + (B_\eps,Du_\eps)]}_{C^{0,\eta}_b(H)}\\
  &\qquad\leq \frac{C_{R,\eta}}{t^{(1+\eta)(\frac12+\delta)}}e^{-\lambda t}
  \left(\norm{f}_{C^0_b(H)} + 
  \sup_{y\in V_{2\alpha}}\norm{B(y)}_{-2\beta}
  \norm{Du_\eps}_{C^0_b(H;V_{2\beta})}\right) \,,
  \end{align*}
  where $(1+\eta)(\frac12+\delta)\in (0,1)$. Hence, 
  the dominated convergence theorem and the estimate \eqref{est1_eps}
  yield that $u_\eps\in C^{1,\eta}_b(H)$ and 
  \begin{align*}
    \norm{u_\eps}_{C^{1,\eta}_b(H)}\leq 
    C_{R,\eta}
    \left(\norm{f}_{C^0_b(H)} + 
    C\sup_{y\in V_{2\alpha}}\norm{B(y)}_{-2\beta}\right)
    \int_0^\infty\frac{1}{t^{(1+\eta)(\frac12+\delta)}}e^{-\lambda t}\,\d t\,,
  \end{align*}
  which proves the estimate \eqref{est3_eps}.
\end{proof}

\begin{prop}
  \label{prop:kolm_reg_conv}
  Assume \ref{H1}-\ref{H2}-\ref{H3}. Let $\lambda>\lambda_0$, 
  let $(u_\eps)_\eps\subset C^2_b(H)$
  be the strong solutions to the Kolmogorov equation
  \eqref{eq_kolm_reg}, and let $u\in C^1_b(H)$ be the mild solution 
  to the Kolmogorov equation \eqref{eq_kolm}.
  Then,
  for all 
  $\eta\in (0,(1-2\delta)/(1+2\delta))$ and $\gamma\in[0, \beta)$, it holds that
  $u\in C^{1,\eta}_b(H)$
  and
  \begin{align}
  \label{conv1_eps}
  \lim_{\eps\to0^+}
  |u_\eps(x)-u(x)| = 0\,, \quad &\forall\,x\in H\,,\\
  \label{conv2_eps}
  \lim_{\eps\to0^+}
  \norm{Du_\eps(x) - Du(x)}_{2\gamma}=0\,, \quad &\forall\,x\in H\,.
  \end{align}
\end{prop}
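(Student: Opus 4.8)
The plan is to combine the uniform estimates of Proposition~\ref{prop:kolm_reg_est} with a compactness argument to extract a candidate limit $\mathfrak u$, to pass to the limit in the mild formulation, and finally to identify $\mathfrak u$ with the mild solution $u$ via an $L^2(H,N_{Q_\infty})$ argument together with the density Lemma~\ref{lem:dense}. \emph{Compactness.} By \eqref{est3_eps} the family $(u_\eps)_\eps$ is bounded in $C^{1,\eta}_b(H)$, and by \eqref{est1_eps} the family $(Du_\eps)_\eps$ is bounded in $C^0_b(H;D(A^\beta))$. Fix $\gamma\in[0,\beta)$. Interpolating $D(A^\gamma)$ between $D(A^\beta)$ and $H$, the uniform $D(A^\beta)$-bound and the $\eta$-Hölder bound of $Du_\eps$ in $H$ produce a uniform modulus of continuity for the maps $Du_\eps\colon H\to D(A^\gamma)$; since $D(A^\beta)\cembed D(A^\gamma)$, for each fixed $x$ the set $\{Du_\eps(x)\}_\eps$ is precompact in $D(A^\gamma)$. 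Choosing a countable dense set $\{x_n\}\subset H$ and a diagonal extraction, I obtain a subsequence (not relabelled) with $u_\eps(x_n)$ convergent and $Du_\eps(x_n)$ convergent in $D(A^\gamma)$ for all $n$; the uniform equicontinuity extends these to $\mathfrak u\in C^{0,1}_b(H)$ and $g\in C^0_b(H;D(A^\gamma))$ with $u_\eps\to\mathfrak u$ and $Du_\eps\to g$ locally uniformly. Convergence of the gradients in $H$ gives $\mathfrak u\in C^1_b(H)$ with $D\mathfrak u=g$, and the $C^{1,\eta}$-bound passes to the limit, so $\mathfrak u\in C^{1,\eta}_b(H)$; moreover $D\mathfrak u(x)\in D(A^\beta)$ for every $x$, being the weak limit in $D(A^\beta)$ of the bounded sequence $(Du_\eps(x))_\eps$.

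\emph{Passage to the limit in the equation.} Writing $(B_\eps(y),Du_\eps(y))=\langle B_\eps(y),Du_\eps(y)\rangle$, I split
\begin{align*}
&\langle B_\eps(y),Du_\eps(y)\rangle-\langle\tilde B(y),D\mathfrak u(y)\rangle\\
&\qquad=\langle B_\eps(y)-\tilde B(y),Du_\eps(y)\rangle+\langle\tilde B(y),Du_\eps(y)-D\mathfrak u(y)\rangle .
\end{align*}
The first term is bounded by $\norm{B_\eps(y)-\tilde B(y)}_{-2\beta}\,\norm{Du_\eps(y)}_{2\beta}\to0$ by \eqref{conv_eps_Bf} and the uniform bound, while the second vanishes because $\tilde B(y)\in D(A^{-\beta})$ is fixed and $Du_\eps(y)\rightharpoonup D\mathfrak u(y)$ weakly in $D(A^\beta)$. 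Hence $\varphi_\eps:=f_\eps+(B_\eps,Du_\eps)\to\varphi:=f+\langle\tilde B,D\mathfrak u\rangle$ pointwise, and $(\varphi_\eps)_\eps$ is uniformly bounded. Two applications of dominated convergence (first inside the Gaussian integral defining $R_t$, then in the time integral against $e^{-\lambda t}$) yield, for every $x\in H$,
\[
\mathfrak u(x)=\int_0^{\infty}e^{-\lambda t}R_t[f+\langle\tilde B,D\mathfrak u\rangle](x)\,\d t .
\]

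\emph{Identification and conclusion.} The main obstacle is that these convergences are only pointwise, so $D\mathfrak u$ need not be continuous into $D(A^\beta)$ and one cannot a priori place $\mathfrak u$ in the class $\mathcal U$ of Proposition~\ref{prop:kolm}. I would therefore read the identity above against $u$, subtract, differentiate and apply $A^\beta$ to obtain a homogeneous mild identity for $A^\beta(Du-D\mathfrak u)$ driven by $\langle\tilde B,Du-D\mathfrak u\rangle$, and estimate it in $L^2(H,N_{Q_\infty})$: using the $L^2$-smoothing of $R$ (the analogue of \eqref{SF3}) and the contraction property of $R$ on $L^2(H,N_{Q_\infty})$, together with the same smallness of $\lambda^{-(1/2-\delta-\beta)}$ exploited in Proposition~\ref{prop:kolm}, the difference satisfies a self-map contraction forcing $Du=D\mathfrak u$, and hence $u=\mathfrak u$, $N_{Q_\infty}$-almost everywhere. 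Since $N_{Q_\infty}$ is nondegenerate, Lemma~\ref{lem:dense} shows that $\{u=\mathfrak u\}$ is dense in $H$, and the Lipschitz continuity of $u$ and $\mathfrak u$ upgrades this to $u=\mathfrak u$ on all of $H$, whence $u\in C^{1,\eta}_b(H)$. As every subsequence of $(u_\eps)_\eps$ admits a further subsequence converging in this way to the \emph{same} limit $u$ with gradients converging in $D(A^\gamma)$, a standard Urysohn argument shows that the full family converges, giving \eqref{conv1_eps} and \eqref{conv2_eps}.
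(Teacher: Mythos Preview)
Your overall architecture is the same as the paper's: uniform $C^{1,\eta}$ bounds, compactness to extract a candidate limit $\mathfrak u$, passage to the limit in the mild equation, identification $\mathfrak u=u$ via the nondegenerate invariant measure $N_{Q_\infty}$ and Lemma~\ref{lem:dense}, and a subsequence argument for full convergence. The compactness step (countable dense set plus diagonal extraction, instead of the paper's exhaustion by the compact balls $\mathcal B_\ell\subset V_{2\alpha}$) and the pointwise passage to the limit in the resolvent formula are both fine and essentially equivalent to what the paper does.

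The gap is in your identification step. You propose to subtract the two mild identities, apply $A^\beta D$, and close a contraction in $L^2(H,N_{Q_\infty})$ using ``the $L^2$-smoothing of $R$ (the analogue of \eqref{SF3})''. But no such $L^2$-analogue of \eqref{SF3} is available here: the estimate \eqref{SF3} is a sup-norm bound obtained from the explicit Cameron--Martin formula and \eqref{strong2}, and transferring it to $L^2(N_{Q_\infty})$ with the same weight $A^\beta$ and the same exponent $t^{-1/2-\delta-\beta}$ would require a genuinely different argument (Meyer-type inequalities for the Ornstein--Uhlenbeck generator with the operator weight $A^\beta$), which is neither proved nor cited. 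Without it, your contraction in $L^2$ does not close, and the identification is not established.

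The paper avoids this entirely. It does \emph{not} run a new contraction in $L^2$; it only uses that $(R_t)_{t\ge0}$ is a contraction semigroup on $L^2(H,N_{Q_\infty})$, so that the resolvent $\varphi\mapsto\int_0^\infty e^{-\lambda t}R_t\varphi\,\d t$ is bounded from $L^2$ to $L^2$. Since $\varphi_\eps:=f_\eps+(B_\eps,Du_\eps)\to f+\langle\tilde B,D\tilde u\rangle$ in $L^2(H,N_{Q_\infty})$ (pointwise, uniformly bounded, against a probability measure), one passes to the limit in the resolvent formula directly in $L^2$, obtaining that $\tilde u$ is a mild solution in $L^2(H,N_{Q_\infty})$. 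The identification $\tilde u=u$ $N_{Q_\infty}$-a.e.\ is then reduced to the uniqueness already established in Proposition~\ref{prop:kolm}, and Lemma~\ref{lem:dense} together with Lipschitz continuity upgrades this to equality on all of $H$. In other words, the key point you are missing is that no gradient smoothing in $L^2$ is needed---the $L^2$-continuity of the resolvent alone suffices to pass to the limit, and uniqueness is inherited from the $C_b$-theory, not reproved in $L^2$.
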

\begin{proof}
The main idea of the proof is to first extract a candidate limit 
$\tilde u$ for $(u_\eps)_\eps$
by using compactness methods. Secondly, exploiting the continuity of 
the semigroup $R$ in $L^2(H,N_{Q_{\infty}})$ we show that 
$\tilde u$ coincides with $u$, $N_{Q_{\infty}}$-almost surely.
Finally, by the density result given in Lemma~\ref{lem:dense},
we conclude that actually $u=\tilde u$ on $H$.

For every $\ell\in\enne_+$ let $(\mathcal B_\ell)_{\ell>0}$ be a family 
of compact subsets of $H$ such that $\mathcal B_{\ell_1}\subset \mathcal B_{\ell_2}$
for all $\ell_1<\ell_2$ and with $\cup_{\ell>0}\mathcal B_\ell$ dense in $H$:
for example, we can take
\[
  \mathcal B_\ell:=\left\{x \in V_{2\alpha}: \norm{x}_{2\alpha}\leq \ell\right\}
\]
and note that $\mathcal B_\ell$ is a compact subset of $H$. Now, 
let us fix $\ell\in\enne_+$ and set, for all $\eps>0$,
$u_\eps^\ell:=(u_\eps)_{|\mathcal B_\ell}: \mathcal B_\ell\to \erre$.
By the estimate \eqref{est1_eps}
the sequence $(u_\eps^\ell)_\eps$
is uniformly equicontinuous and bounded. Hence, 
the Arzel\`a--Ascoli theorem yields the existence of
a subsequence 
$(u_{\eps_n}^\ell)_n$ and $u^\ell\in C^0_b(\mathcal B_\ell)$ such that 
\[
  \lim_{n\to\infty}\sup_{x\in \mathcal B_\ell}|u_{\eps_n}^\ell(x) - u^\ell(x)|=0\,.
\]
It is straightforward to check that the family $(u^\ell)_\ell$ is consistent
on the family of balls $(\mathcal B_\ell)_\ell$ in the sense that 
$u^{\ell+1}=u^\ell$ on $\mathcal B_\ell$ for all $\ell\in\enne_+$.
Consequently, this procedure identifies an element $\tilde u: V_{2\alpha}\to\erre$
such that, by possibly employing a diagonal argument, 
\beq
  \label{conv1_aux}
  \lim_{n\to\infty}\sup_{x\in \mathcal B_\ell}|u_{\eps_n}(x) - \tilde u(x)|=0
  \quad\forall\,\ell\in\enne_+\,,
\eeq
hence, in particular, 
\[
  \lim_{n\to\infty}|u_{\eps_n}(x) - \tilde u(x)|=0
  \quad\forall\,x\in V_{2\alpha}\,.
\]
We show now that actually $\tilde u$ can be uniquely extended to 
a Lipschitz function on the whole $H$. To this end, by \eqref{est1_eps},
for every $x_1,x_2\in V_{2\alpha}$ and $n\in\enne$ we have 
\begin{align*}
  |\tilde u(x_1)-\tilde u(x_2)|
  &\leq 
  |\tilde u(x_1) - u_{\eps_n}(x_1)|
  +|u_{\eps_n}(x_1) - u_{\eps_n}(x_2)|
  +|u_{\eps_n}(x_2)-\tilde u(x_2)|\\
  &\leq C\norm{x_1-x_2}
  +|\tilde u(x_1) - u_{\eps_n}(x_1)|
  +|u_{\eps_n}(x_2)-\tilde u(x_2)|\,,
\end{align*}
so that letting $n\to\infty$ yields 
\[
  |\tilde u(x_1)-\tilde u(x_2)|\leq C\norm{x_1-x_2}\,, \quad\forall\,x_1,x_2\in V_{2\alpha}\,.
\]
Since $V_{2\alpha}$ is dense in $H$, this shows that actually $\tilde u$
uniquely extends to an element $\tilde u \in C^{0,1}(H)$.
Moreover, by a similar argument one can show the pointwise
convergence of $(u_{\eps_n})_n$ to $\tilde u$ on the whole $H$. Indeed, 
for every $x\in H$, there is $(x_j)_j\subset V_{2\alpha}$ such that 
$x_j\to x$ in $H$ as $j\to\infty$. Hence, by Lipschitz-continuity and \eqref{est1_eps}, we get
\begin{align*}
 |\tilde u(x) - u_{\eps_n}(x)|
 &\leq|\tilde u(x) - \tilde u(x_j)|
 +|\tilde u(x_j) - u_{\eps_n}(x_j)|
 +|u_{\eps_n}(x_j)-u_{\eps_n}(x)|\\
 &\leq 2C\norm{x_j-x} + |\tilde u(x_j) - u_{\eps_n}(x_j)|\,,
\end{align*}
which implies that 
\[
  \lim_{n\to\infty}|u_{\eps_n}(x)-\tilde u(x)|=0\,, \quad\forall\,x\in H\,.
\]
Let us consider now the sequence of derivatives $(Du_{\eps})_\eps$.
For all $\ell\in\enne_+$
we set $\mathfrak u_\eps^{\ell}:=(Du_\eps)_{|\mathcal B_\ell}:\mathcal B_\ell\to H$.
Proceeding as before, given a fixed $\ell\in\enne_+$,
one has that 
$(\mathfrak u^\ell_{\eps})_\eps$ is uniformly equicontinuous thanks to \eqref{est3_eps},
and for all $x\in \mathcal B_\ell$ the sequence $(\mathfrak u_\eps^\ell(x))_\eps$
is relatively compact in $V_{2\gamma}$ thanks to \eqref{est3_eps}.
Hence, using again the Arzel\`a--Ascoli theorem and proceeding as above
possibly employing a diagonal argument and suitably renominating the subsequence, 
this identifies an element $\mathfrak u: V_{2\alpha}\to H$ such that 
\beq
\label{conv2_aux}
  \lim_{n\to\infty}\sup_{x\in \mathcal B_\ell}
  \norm{Du_{\eps_n}(x) - \mathfrak u(x)}_{2\gamma}=0\,,
  \quad\forall\,\ell\in\enne_+\,,
\eeq
hence also
\[
  \lim_{n\to\infty}\norm{Du_{\eps_n}(x) - \mathfrak u(x)}_{2\gamma}
  =0\,, \quad\forall\,x\in V_{2\alpha}\,.
\]
Arguing on the line of the previous computations for $\tilde u$, we can show that 
$\mathfrak u$ uniquely extends to a function $\mathfrak u\in C^{0,\eta}(H;H)$,
where $\eta$ is arbitrary and fixed in $(0, (1-2\delta)/(1+2\delta))$.
Indeed, for all $x_1,x_2\in V_{2\alpha}$ 
and $n\in\enne$, we have 
\begin{align*}
  \norm{\mathfrak u(x_1)-\mathfrak u(x_2)}
  &\leq 
  \norm{\mathfrak u(x_1) - Du_{\eps_n}(x_1)}
  +\norm{Du_{\eps_n}(x_1) - Du_{\eps_n}(x_2)}
  +\norm{Du_{\eps_n}(x_2)-\mathfrak u(x_2)}\\
  &\leq C\norm{x_1-x_2}^\eta
  +\norm{\mathfrak u(x_1) - Du_{\eps_n}(x_1)}
  +\norm{Du_{\eps_n}(x_2)-\mathfrak u(x_2)}\,,
\end{align*}
so that letting $n\to\infty$ yields 
\[
  \norm{\mathfrak u(x_1)-\mathfrak u(x_2)}
  \leq C\norm{x_1-x_2}^\eta \quad\forall\,x_1,x_2\in V_{2\alpha}\,,
\]
and the desired extension follows by density of $V_{2\alpha}$ in $H$.
Moreover, arguing as for $\tilde u$, it is immediate to show the pointwise convergence 
\[
  \lim_{n\to\infty}\norm{Du_{\eps_n}(x) - \mathfrak u(x)}_{2\gamma}
  =0\,, \quad\forall\,x\in H\,.
\]
To conclude, it remains to prove that $\tilde u \in C^{1,\eta}_b(H)$ and 
$D\tilde u=\mathfrak u$. To this end, from \eqref{conv1_aux} and \eqref{conv2_aux}
it follows that, for every $\ell\in\enne_+$, 
the sequence $((u_{\eps_n})_{|\mathcal B_\ell})_n$ is Cauchy in
the space 
\[ 
\left\{v\in C^1_b(\mathcal B_\ell): Dv\in C^0_b(\mathcal B_\ell; H)\right\}.
\] 
Consequently, 
one has that $\tilde u \in C^1_b(\mathcal B_\ell)$ for every $\ell\in\enne_+$
and $D\tilde u(x) = \mathfrak u(x)$ in $H$ for every $x\in V_{2\alpha}$.
Let now $x,h\in H$ and 
$(x_k)_k, (h_k)_k\subset V_{2\alpha}$ be such that $x_k\to x$ 
and $h_k\to h$ in $H$
as $k\to\infty$. Then, we have 
\begin{align*}
  \tilde u(x_k+ h_k)-\tilde u(x_k)
  =\int_0^1 \left(\mathfrak u(x_k+\sigma h_k), h_k\right)_{H}\,\d \sigma \,,
  \quad\forall\,k\in\enne\,,
\end{align*}
so that letting $k\to\infty$ yields, by the dominated convergence theorem, 
\[
  \tilde u(x+ h)-\tilde u(x) = \int_0^1\left(\mathfrak u(x+\sigma h), h\right)_{H}\,\d \sigma\,,
  \qquad\forall\,x,h\in H\,,
\]
showing that $\tilde u:H\to\erre$ 
is G\^ateaux differentiable with $D\tilde u=\mathfrak u\in C^{0,\eta}(H;H)$.
This implies then that $\tilde u\in C^{1,\eta}(H)$, as required. 
The boundedness of $\tilde u$ and
$D\tilde u$ follows from \eqref{est1_eps}--\eqref{est3_eps}
and the pointwise convergences obtained above.

Finally, note that 
\[
  u_{\eps_n}(x)=\int_0^{\infty}e^{-\lambda t} 
  R_t[f_{\eps_n}+(B_{\eps_n}, Du_{\eps_n})](x)\,\d t \,, \quad\forall\,x\in H\,,
\]
and that 
$f_{\eps_n}+(B_{\eps_n}, Du_{\eps_n})\to f+\langle\tilde B, Du\rangle$ 
in $L^2(H,N_{Q_\infty})$
by the dominated convergence theorem.
Hence, recalling that $R$ is a strongly continuous semigroup of contractions 
on $L^2(H,N_{Q_\infty})$, by letting $n\to\infty$ one obtains
\[
  \tilde u=\int_0^{\infty}e^{-\lambda t} 
  R_t[f+\langle\tilde B, D\tilde u\rangle]\,\d t \quad\text{in } L^2(H,N_{Q_\infty})\,.
\]
Now, by uniqueness of the mild solution 
$u$ of the Kolmogorov equation \eqref{eq_kolm}
in $L^2(H,N_{Q_\infty})$, 
this implies that $\tilde u = u$ $N_{Q_\infty}$-almost surely.
By Lemma~\ref{lem:dense} and the fact that $\tilde u\in C^{0,1}(H)$,
this implies that $\tilde u(x) = u(x)$ for all $x\in H$, hence in particular that 
$\tilde u$ is bounded and the convergences hold along the entire sequence $\eps$.
\end{proof}


\section{Proofs of main theorems}
\label{sec:uniq}
\subsection{Proof of Theorem~\ref{thm2}}\label{ssec:uniq_bound}
Let $x\in H$, and let 
\[
 \left(\Omega, \cF, (\cF_t)_{t\geq0}, \P, W, X\right)\,, \qquad
 \left(\Omega, \cF, (\cF_t)_{t\geq0}, \P, W, Y\right)
\]
be two global weak solutions to \eqref{eq0} in the sense of Definition~\ref{def_sol}
with respect to the same initial datum $x$. Moreover, 
let $\lambda>\lambda_0$ (as specified in Proposition~\ref{prop:kolm}), 
let $(u_\eps)_\eps\subset C^2_b(H)$
be the strong solutions to the Kolmogorov equation
\eqref{eq_kolm_reg}, and let $u\in C^1_b(H)$ be the mild solution 
to the Kolmogorov equation \eqref{eq_kolm}.

Note that $(e_k)_{k\in\enne}$ is a complete orthonormal system of $H$
made of eigenvectors of $A$, and $(\lambda_k)_{k\in\enne}$ are the corresponding 
eigenvalues. By setting $H_n:=\operatorname{span}\{e_1,\ldots,e_n\}$
and $P_n:H\to H_n$ 
as the orthogonal projection on $H_n$, for all $n\in\enne$,
we note that $P_n$ can be extended to a linear continuous operator 
$P_n:D(A^{-\beta})\to H_n$ as follows
\[
  P_nv:=\sum_{k=1}^n\langle v, e_k\rangle e_k\,, \quad v\in D(A^{-\beta})\,.
\]

Now, for every $j\in\enne$, we set $X_j:=P_jX$, $x_j:=P_jx$, 
$B_j:=P_jB$, $G_j:=P_jG$, and $Q_j:=G_jG_j^*$. Then,
since $H_j\subset D(A)$ for every $j\in\enne$, one has that $X_j$ are 
analytically strong solutions to the equation 
\begin{align*}
  X_j(t)+\int_0^tAX_j(s)\,\d s = x_j + \int_0^tB_j(X(s))\,\d s + \int_0^tG_j(s)\,\d W(s)
  \quad\text{in } H\,, \quad\forall\,t\geq0\,, \quad\P\text{-a.s.}
\end{align*}
By It\^o's formula and by adding and subtracting suitable terms one has that
\begin{align*}
  &\E\left[u_\eps(X_j(t))\right] 
  -\frac12\E\int_0^t\operatorname{Tr}\left[QD^2u_\eps(X_j(s))\right]\,\d s
  +\E\int_0^t\left(AX_j(s), Du_\eps(X_j(s))\right)\,\d s\\
  &=u_\eps(x_j)+ \E\int_0^t\left(B_j(X(s)), Du_\eps(X_j(s))\right)\,\d s
  +\frac12\E\int_0^t\operatorname{Tr}\left[(Q_j-Q)D^2u_\eps(X_j(s))\right]\,\d s\,,
\end{align*}
so that by the Kolmogorov equation \eqref{eq_kolm_reg} we get
\begin{align}
  \label{ito_uniq_aux}
  &\E\left[u_\eps(X_j(t))\right] - u_\eps(x_j) -\lambda\E\int_0^tu_\eps(X_j(s))\,\d s
  +\E\int_0^tf_\eps(X_j(s))\,\d s\\
  \nonumber
  &= \underbrace{\E\int_0^t\left(B_j(X(s))-B_\eps(X_j(s)), 
  Du_\eps(X_j(s))\right)\,\d s}_{I_1(\eps,j)}
  +\underbrace{\frac12\E\int_0^t\operatorname{Tr}\left[(Q_j-Q)D^2
  u_\eps(X_j(s))\right]\,\d s}_{I_2(\eps,j)}\,.
\end{align}
We now show that, by letting $\eps\to0^+$ and $j\to\infty$ under a suitable scaling, one obtains that $I_1(\eps,j)\to0$ and $I_2(\eps,j)\to0$.
To this end, by \eqref{est1_eps} we have
\begin{align*}
  &|I_1(\eps,j)|\\
  & \leq\E\int_0^t\left|\langle B_j(X(s))-B(X(s)), 
  Du_\eps(X_j(s))\rangle\right|\,\d s +
  \E\int_0^t\left|\langle B(X(s))-B_\eps(X(s)), 
  Du_\eps(X_j(s))\rangle\right|\,\d s\\
  &\qquad \qquad+\E\int_0^t\left|\langle B_\eps(X(s))-B_\eps(X_j(s)), 
  Du_\eps(X_j(s))\rangle\right|\,\d s\\
  &\leq C\left(\norm{B_j(X) - B(X)}_{L^2(\Omega; L^2(0,t; V_{-2\beta}))}
  +\norm{B_\eps(X) - B(X)}_{L^2(\Omega; L^2(0,t; V_{-2\beta}))}\right.\\
  &\qquad \qquad\left.
  +\norm{B_\eps(X) - B_\eps(X_j)}_{L^2(\Omega; L^2(0,t; V_{-2\beta}))}\right) \,.
\end{align*}

As far as $I_2$ is concerned, note that since $Q$ and $A$ commute, 
\begin{align*}
  I_2(\eps,j)&=
  \frac12\E\int_0^t\operatorname{Tr}\left[(Q_j-Q)D^2
  u_\eps(X_j(s))\right]\,\d s
  =\frac12\E\int_0^t\operatorname{Tr}\left[(P_j-I)
  QD^2u_\eps(X_j(s))\right]\,\d s\\
  &=\frac12\E\int_0^t\operatorname{Tr}\left[(P_j-I)
  GD^2u_\eps(X_j(s))G\right]\,\d s
\end{align*}
Hence, since $\norm{P_j-I}_{\cL(H,H)}\leq1$
one has
\begin{align*}
  &\E\int_0^t\operatorname{Tr}\left[(P_j-I)
  GD^2u_\eps(X_j(s))G\right]\,\d s\\
  &=\E\int_0^t\operatorname{Tr}\left[(P_j-I)
  GD^2u_\eps(X(s))G\right]\,\d s\\
  &\qquad+
  \E\int_0^t\operatorname{Tr}\left[(P_j-I)
  \left(GD^2u_\eps(X_j(s))G- GD^2u_\eps(X(s))G\right)\right]\,\d s\\
  &\leq
  \E\int_0^t\operatorname{Tr}\left[(P_j-I)
  GD^2u_\eps(X(s))G\right]\,\d s\\
  &\qquad+\E\int_0^t
  \norm{GD^2u_\eps(X_j(s))G- GD^2u_\eps(X(s))G}_{\cL^1(H,H)}
  \,\d s
\end{align*}
Since $P_j\to I$ in the strong operator topology of $\cL(H,H)$, 
i.e., $P_jx\to x$ for every $x\in H$ as $j\to\infty$, 
and since $QD^2u_\eps(x)\in \cL^1(H,H)$ for all $x\in H$, 
we infer that 
\[
  \operatorname{Tr}\left[(P_j-I)GD^2u_\eps(x)G\right]
  \to 0 \quad\text{in } \cL^1(H,H) \quad\forall\,x\in H\,.
\] 
This implies by the dominated convergence theorem that 
\[
  \lim_{j\to\infty}\E\int_0^t\operatorname{Tr}\left[(P_j-I)
  GD^2u_\eps(X(s))G\right]\,\d s =0
  \quad\forall\,\eps>0\,.
\]
Moreover, by exploiting \eqref{cont_L1} and again the dominated convergence theorem
we get 
\[
  \lim_{j\to\infty}\E\int_0^t
  \norm{GD^2u_\eps(X_j(s))G- GD^2u_\eps(X(s))G}_{\cL^1(H,H)}
  \,\d s =0
  \quad\forall\,\eps>0\,.
\]
Putting everything together we infer that 
\[
  \lim_{j\to\infty} I_2(\eps,j)=0 \quad\forall\,\eps>0\,.
\]
Note that we are only using that 
$u_\eps\in C^2_b(H)$ for every $\eps>0$, 
not that $(u_\eps)_\eps$ is bounded in $C^2_b(H)$, which is not the case.

Now, by \eqref{conv_eps_Bf}, the dominated convergence theorem,
and the fact that $\tilde B(X)=B(X)$ in $V_{-2\beta}$ almost everywhere,
for every $k\in\enne$, we can find $\eps_k>0$ such that 
\[
 \norm{B_{\eps_k}(X) - B(X)}_{L^2(\Omega; L^2(0,t; V_{-2\beta}))}\leq\frac1k\,.
\]
Given such $\eps_k$ and noting that, by definition of $P_j$,
the continuity of $B_{\eps_k}$, and the computations above
\begin{align*}
  &\lim_{j\to\infty}
  \norm{B_j(X) - B(X)}_{L^2(\Omega; L^2(0,t; V_{-2\beta}))}=0\,,\\
  &\lim_{j\to\infty}
  \norm{B_{\eps_k}(X) - B_{\eps_k}(X_j)}_{L^2(\Omega; L^2(0,t; V_{-2\beta}))}=0\,,\\
  &\lim_{j\to\infty} I_2(\eps_k, j)=0\,,
\end{align*}
we can find $j_k\in\enne$ such that 
\begin{align*}
  &\norm{B_{j_k}(X) - B(X)}_{L^2(\Omega; L^2(0,t; V_{-2\beta}))}
  +\norm{B_{\eps_k}(X) - B_{\eps_k}(X_{j_k})}_{L^2(\Omega; L^2(0,t; V_{-2\beta}))}\\
  &\qquad+|I_2(\eps_k, j_k)|\leq\frac1k\,.
\end{align*}
Hence, one has that 
\[
  |I_1(\eps_k, j_k)| + |I_2(\eps_k,j_k)|\leq \frac{C}k\,.
\]
Then, the identity \eqref{ito_uniq_aux} evaluated at $\eps_k$ and $j_k$ yields 
\begin{align*}
  &\E\left[u_{\eps_k}(X_{j_k}(t))\right] 
  - u_{\eps_k}(x_{j_k}) 
  -\lambda\E\int_0^tu_{\eps_k}(X_{j_k}(s))\,\d s
  +\E\int_0^tf_{\eps_k}(X_{j_k}(s))\,\d s\\
  &= I_1(\eps_k, j_k) + I_2(\eps_k,j_k)\,.
\end{align*}
Now, we let $k\to\infty$. To this end, note that
by the estimate \eqref{est1_eps}, for every $r\geq0$ and $\P$-a.s., we have
\begin{align*}
  |u_{\eps_k}(X_{j_k}(r))-u(X(r))|
  &\leq
  |u_{\eps_k}(X_{j_k}(r)) - u_{\eps_k}(X(r))| + |u_{\eps_k}(X(r))-u(X(r))|\\
  &\leq\norm{u_{\eps_k}}_{C^{1}_b(H)}\norm{X_{j_k}(r)-X(r)}
  +|u_{\eps_k}(X(r))-u(X(r))|\\
  &\leq C\norm{X_{j_k}(r)-X(r)}+|u_{\eps_k}(X(r))-u(X(r))|\,.
\end{align*}
Consequently, by \eqref{est1_eps}, \eqref{conv1_eps}, the properties of $P_j$,
and the dominated convergence theorem, one has, as $k\to\infty$, that 
\begin{align*}
  \E\left[u_{\eps_k}(X_{j_k}(t))\right] &\to \E\left[u(X(t))\right]\,,\\
  \E\int_0^tu_{\eps_k}(X_{j_k}(s))\,\d s &\to 
  \E\int_0^tu(X(s))\,\d s\,.
\end{align*}
Analogously, since $x_{j_k}\to x$ in $H$, by the same argument it holds also, 
as $k\to\infty$, that 
\[
  u_{\eps_k}(x_{j_k})  \to u(x)\,.
\]
Eventually, for every $r\geq0$ and $\P$-almost surely
we have that 
\begin{align*}
  |f_{\eps_k}(X_{j_k}(r))-f(X(r))|
  &\leq
  |f_{\eps_k}(X_{j_k}(r)) - f(X_{j_k}(r))| + |f(X_{j_k}(r))-f(X(r))|\\
  &\leq\norm{f_{\eps_k}-f}_{C^0_b(H)} + |f(X_{j_k}(r))-f(X(r))|\,,
\end{align*}
so that by \eqref{conv_eps_Bf}, the continuity of $f$, the properties of $P_j$,
and the dominated convergence theorem, it holds, as $k\to\infty$, that 
\[
  \E\int_0^tf_{\eps_k}(X_{j_k}(s))\,\d s\to
  \E\int_0^tf(X(s))\,\d s\,.
\]
Hence, by letting $k\to\infty$ we are left with 
\begin{align*}
  &\E\left[u(X(t))\right] 
  - u(x) 
  -\lambda\E\int_0^tu(X(s))\,\d s
  +\E\int_0^tf(X(s))\,\d s = 0 \quad\forall\,t\geq0\,.
\end{align*}
Equivalently, this can be written as
\[
  \E\left[e^{-\lambda t}u(X(t))\right] 
  - u(x) 
  +\E\int_0^te^{-\lambda s}f(X(s))\,\d s = 0 \quad\forall\,t\geq0\,,
\]
so that letting $t\to+\infty$ yields, by the dominated convergence theorem, that 
\beq
  \label{eq_uniq}
  u(x) = \int_0^\infty e^{-\lambda s}\E\left[f(X(s))\right]\,\d s \quad\forall\,f\in UC_b(H)\,.
\eeq
Analogously, the same proof applied to the solution $Y$ 
yields 
\beq
  \label{eq_uniq2}
  u(x) = \int_0^\infty e^{-\lambda s}\E\left[f(Y(s))\right]\,\d s \quad\forall\,f\in UC_b(H)\,.
\eeq
By the properties of the Laplace transform, the fact that 
any element $g\in C^0_b(H)$ can be approximated pointwise 
by a sequence in $UC_b(H)$, and by the dominated convergence theorem, 
this implies that 
\[
  \E\left[g(X(t))\right] = \E\left[g(Y(t))\right] \quad\forall\,g\in C^0_b(H)\,, 
  \quad\forall\,t\geq0\,.
\]
It follows that $X(t)$ and $Y(t)$ have the same law on $H$ for every $t\geq0$,
hence also, by pathwise continuity of $X$ and $Y$ that 
the laws of $X$ and $Y$ coincide on $C^0(\erre_+; H)$.
For a more detailed argument we refer to \cite{priola}.

\begin{remark}
Note that if $G$ is also Hilbert-Schmidt, one can handle the term $I_2$
more directly as
\begin{align*}
  |I_2(\eps,j)|&\leq\frac12\E\int_0^t\left|\operatorname{Tr}\left[(Q_j-Q)D^2
  u_\eps(X_j(s))\right]\right|\,\d s\\
  &\leq\frac{t}2\norm{Q_j-Q}_{\cL^1(H,H)}
  \norm{D^2u_\eps}_{C^0_b(H; \cL(H,H))}\\
  &\leq t\norm{G}_{\cL^2(H,H)}\norm{G_j-G}_{\cL^2(H,H)}
  \norm{D^2u_\eps}_{C^0_b(H; \cL(H,H))}\,.
\end{align*}
\end{remark}

\subsection{Proof of Theorem~\ref{thm3}}
\label{sec:uniq_unb}
We remove here 
the boundedness assumption on $B$ made in Section~\ref{sec:uniq}.
Let $x\in H$, and let 
\[
 \left(\Omega, \cF, (\cF_t)_{t\geq0}, \P, W, X,\tau_X\right)\,, \qquad
 \left(\Omega, \cF, (\cF_t)_{t\geq0}, \P, W, Y,\tau_Y\right)
\]
be two local weak solutions to \eqref{eq0} in the sense of Definition~\ref{def_sol_loc}
with respect to the same initial datum $x$, and let $\tau:=\tau_X\wedge\tau_Y$.
Let us handle the three terms appearing in \eqref{eq_sol_loc}.
Since $x\in D(A^\alpha)$ one has that 
\[
  S(\cdot)x\in C^0([0,T]; D(A^\alpha))\,, \quad\forall\,T>0\,.
\]
Moreover, since $B(X^\tau)\in L^2_{loc}(\erre_+; V_{-2\beta})$,
one has by \cite[Thms.~3.3--3.5]{vNVW}
\begin{align*}
  \int_0^{\cdot\wedge\tau} S((\cdot\wedge\tau)-s)B(X^\tau(s))\d s 
  \in C^0([0,T]; D(A^{\frac12-\beta}))\,, \quad\forall\,T>0\,,
\end{align*}
while assumption \ref{H3} and \cite[Thm.~5.15]{dapratozab} guarantee that
\begin{align*}
  \int_0^{\cdot\wedge\tau} S((\cdot\wedge\tau)-s)G\d W(s) \in 
  \bigcap_{\zeta\in[0,\xi)}C^0([0,T]; D(A^\zeta))\,, \quad\forall\,T>0\,.
\end{align*}
Hence, since $\alpha+\beta\leq\frac12$ and $\alpha<\xi$ by assumption,
one obtains that 
\[
  X^\tau,Y^\tau \in C^0([0,+\infty); D(A^\alpha))\,, \quad\P\text{-a.s.}
\]
In particular, there exists $\hat\Omega\in\cF$ with $\P(\hat\Omega)=1$
such that 
\beq\label{theta_alpha}
  X^\tau(\omega,t),Y^\tau(\omega,t)\in D(A^\alpha) \quad\forall\,t\geq0\,,
  \quad\forall\,\omega\in\hat\Omega\,.
\eeq
This allows to define, for every $N\in\enne$, the stopping times
\[
  \tau_{N}^X:=\tau\wedge\inf\left\{t\geq0: \norm{X^\tau(t)}_{2\alpha} > N
  \right\}\,, \qquad
  \tau_{N}^Y:=\tau\wedge\inf\left\{t\geq0: \norm{Y^\tau(t)}_{2\alpha} > N
  \right\}\,,
\]
with the usual convention that $\inf\emptyset=+\infty$, and we set
\[
   \tau_{N}:=\tau_{N}^X \wedge \tau_{N}^Y\,.
\]
Note that $\tau_N\nearrow\tau$ almost surely as $N\to\infty$.
Analogously, we define for every $N\in\enne$ the truncated operator
\[
  B_N:D(A^\alpha)\to D(A^{-\beta})\,, \qquad
  B_N(x):=B(\Pi_Nx)\,, \quad x\in D(A^\alpha)\,,
\]
where $\Pi_N:D(A^\alpha)\to D(A^\alpha)$ is the orthogonal projection on the closed 
convex set $\{z\in D(A^\alpha): \|z\|_{2\alpha}\leq N\}$.
Clearly, for every $N\in\enne$, 
$B_N:D(A^\alpha)\to D(A^{-\beta})$ is measurable and bounded
(since $B$ is locally bounded by assumption \ref{H2}). 
Furthermore, using the superscript to denote the stopped processes as usual,
it holds for all $t\geq0$ that
\begin{align}
  \nonumber
  X^{\tau_N}(t)&=S(t\wedge\tau_N)x
  + \int_{0}^{t\wedge\tau_N}S((t\wedge\tau_N)-s)B_N(X^{\tau_N}(s))\,\d s\\
  \label{eq_xn}
  &+\int_{0}^{t\wedge\tau_N}S((t\wedge\tau_N)-s)G\,\d W(s)\,,\\
  \nonumber
  Y^{\tau_N}(t)&=S(t\wedge\tau_N)x
  + \int_{0}^{t\wedge\tau_N}S((t\wedge\tau_N)-s)B_N(Y^{\tau_N}(s))\,\d s\\
   \label{eq_yn}
  &+\int_{0}^{t\wedge\tau_N}S((t\wedge\tau_N)-s)G\,\d W(s)\,.
\end{align}
Note that in \eqref{eq_xn}--\eqref{eq_yn} we have used 
that for every $s\leq \tau_N$ one has $B(X^{\tau}(s))=B_N(X^{\tau_N}(s))$ and
$B(Y^\tau(s))=B_N(Y^{\tau_N}(s))$.

The idea is now to construct two processes $\tilde X_N$ and $\tilde Y_N$ 
which are global
weak solutions to equation \eqref{eq0} with $B_N$ in place of $B$
and such that, setting
\[
[\![0,\tau_N]\!]:=\left\{(\omega,t)\in\Omega\times(0,+\infty): 
0\leq t\leq\tau_N(\omega)\right\}\,,
\]
it holds that $\tilde X_N\mathbbm1_{[\![0,\tau_N]\!]}
=X\mathbbm1_{[\![0,\tau_N]\!]}$
and $\tilde Y_N\mathbbm1_{[\![0,\tau_N]\!]}
=Y\mathbbm1_{[\![0,\tau_N]\!]}$ in distribution.
To this end, let us consider the equations
\begin{align*}
  \d X_N +A X_N\,\d t &= B_N(X_N)\,\d t + G\d W\,, \qquad
  X_N(0)=X(\tau_N)\,,\\
  \d Y_N +A Y_N\,\d t &= B_N(Y_N)\,\d t + G\d W\,, \qquad
  Y_N(0)=Y(\tau_N)\,.
\end{align*}
Since $X(\tau_N), Y(\tau_N)\in D(A^\alpha)$ almost surely,  as a consequence of Theorem~\ref{thm1} (with the choice $\delta_0=\alpha$)
there exist two weak solutions 
\[
  \left(\Omega_N', \cF_N', (\cF_{N,t}')_{t\geq0}, \P_N', W_N',  X_N\right)\,, \qquad
  \left(\Omega_N'', \cF_N'', (\cF_{N,t}'')_{t\geq0}, \P_N'', W_N'', Y_N\right)\,,
\]
in the sense of Definition~\ref{def_sol} (with $B_N$). At this point, we set 
\begin{alignat*}{2}
  &\tilde\Omega_N':=\Omega \times \Omega_N'\,,
  &&\tilde\Omega_N'':=\Omega \times \Omega_N''\,,\\
  &\tilde \cF_N':=\cF\otimes \cF_N'\,,\qquad
  &&\tilde \cF_N'':=\cF\otimes \cF_N''\,,\\
  &\tilde \P_N':=\P\otimes \P_N'\,, \qquad
  &&\tilde \P_N'':=\P\otimes \P_N''\,.
\end{alignat*}
As far as the Wiener process is concerned, we define
\begin{align*}
  \tilde W_N'(\omega,\omega',t)&:=W(\omega,t\wedge\tau_N)
  +W_N'(\omega',t-\tau_N(\omega))\mathbbm1_{\{t>\tau_N(\omega)\}}\,,
  \quad(\omega,\omega',t)\in\tilde\Omega_N'\times\erre_+\,,\\
  \tilde W_N''(\omega,\omega'',t)&:=W(\omega,t\wedge\tau_N)
  +W_N''(\omega'',t-\tau_N(\omega))\mathbbm1_{\{t>\tau_N(\omega)\}}\,,
  \quad(\omega,\omega'',t)\in\tilde\Omega_N''\times\erre_+\,.
\end{align*}
Analogously, we define
\begin{align*}
  \tilde X_N(\omega,\omega',t)&:=X(\omega,t)\mathbbm1_{\{t\leq\tau_N(\omega)\}}
  + X_N(\omega',t-\tau_N(\omega))\mathbbm1_{\{t>\tau_N(\omega)\}}\,,
  \quad(\omega,\omega',t)\in\tilde\Omega_N'\times\erre_+\,,\\
  \tilde Y_N(\omega,\omega'',t)&:=Y(\omega,t)\mathbbm1_{\{t\leq\tau_N(\omega)\}}
  +Y_N(\omega'',t-\tau_N(\omega))\mathbbm1_{\{t>\tau_N(\omega)\}}\,,
  \quad(\omega,\omega'',t)\in\tilde\Omega_N''\times\erre_+\,.
\end{align*}
Let us consider the filtrations
\[
  \tilde \cF_{N,t}':=\sigma(\tilde W_N'(s), \tilde X_N(s):s\leq t)\,,\quad
  \tilde \cF_{N,t}'':=\sigma(\tilde W_N''(s), \tilde Y_N(s):s\leq t)\,,\qquad t\geq0\,.
\]
Then, since $\tilde X_N=X$ and $\tilde Y_N=Y$ on $[\![0,\tau_N]\!]$
as a consequence of \eqref{eq_xn}-\eqref{eq_yn} one has
\begin{align*}
  \tilde X_N(t)&=S(t)x
  + \int_{0}^{t}S(t-s)B_N(\tilde X_N(s))\,\d s
  +\int_{0}^{t}S(t-s)G\,\d W(s)\,, \quad\forall\,t\in[0,\tau_N]\,,\\
  \tilde Y_N(t)&=S(t)x
  + \int_{0}^{t}S(t-s)B_N(\tilde Y_N(s))\,\d s
  +\int_{0}^{t}S(t-s)G\,\d W(s)\,, \quad\forall\,t\in[0,\tau_N]\,.
\end{align*}
Furthermore, for $t>\tau_N$, from the definitions
of $\tilde X_N$, $\tilde W_N'$, and from the equations
satisfied by $X$ and by $X_N$, it follows that 
\begin{align*}
  &\tilde X_N(t)=X_N(t-\tau_N)\\
  &=S(t-\tau_N)X(\tau_N) + \int_0^{t-\tau_N}S(t-\tau_N-s)B_N(X_N(s))\,\d s
  +\int_{0}^{t-\tau_N}S(t-\tau_N-s)G\,\d W_N'(s)\\
  &=S(t-\tau_N)S(\tau_N)x + 
  S(t-\tau_N)\int_{0}^{\tau_N}S(\tau_N-s)B_N(\tilde X_N(s))\,\d s\\
  &\qquad+S(t-\tau_N)\int_{0}^{\tau_N}S(\tau_N-s)G\,\d W(s)
  + \int_0^{t-\tau_N}S(t-\tau_N-s)B_N(X_N(s))\,\d s\\
  &\qquad+\int_{0}^{t-\tau_N}S(t-\tau_N-s)G\,\d W'_N(s)\\
  &=S(t)x + \int_{0}^{\tau_N}S(t-s)B_N(\tilde X_N(s))\,\d s
  +\int_{0}^{\tau_N}S(t-s)G\,\d W(s)\\
  &\qquad+\int_{\tau_N}^{t}S(t-s)B_N(X_N(s-\tau_N))\,\d s
  +\int_{\tau_N}^{t}S(t-s)G\,\d W_N'(s)\\
  &=S(t)x
  + \int_{0}^{t}S(t-s)B_N(\tilde X_N(s))\,\d s
  +\int_0^tS(t-s)G\,\d \tilde W_N'(s)\,.
\end{align*}
Arguing analogously for $\tilde Y_N$, this shows that 
the families 
\beq\label{sol_prod}
  \left(\tilde\Omega_N', \tilde\cF_N', (\tilde\cF_{N,t}')_{t\geq0}, \tilde\P_N', 
  \tilde W_N', \tilde X_N\right)\,, \qquad
  \left(\tilde\Omega_N'', \tilde\cF_N'', (\tilde\cF_{N,t}'')_{t\geq0}, 
  \tilde\P_N'', \tilde W_N'', \tilde Y_N\right)\,,
\eeq
are global weak solutions to equation \eqref{eq0} with $B_N$.

Now, we can argue exactly as in Section~\ref{ssec:uniq_bound}
by using the solution $u_N$ to the Kolmogorov equation associated to 
the bounded operator $B_N$ and some $\lambda_N>0$ large enough.
By proceeding exactly as in the proof in Section~\ref{ssec:uniq_bound}, 
one gets for all $f\in C^0_b(H)$ that
\[
  u_N(x)
  =\E\int_0^{+\infty}e^{-\lambda_N s} f(\tilde X_N(s))\,\d s
  =\E\int_0^{+\infty}e^{-\lambda_N s} f(\tilde Y_N(s))\,\d s\,.
\]
Consequently, it follows that 
$\tilde X_N$ 
and $\tilde Y_N$ have the same distribution on $C^0(\erre_+; H)$.
Recalling that $X=\tilde X_N$ and $Y=\tilde Y_N$ on $[\![0,\tau_N]\!]$,
this implies that $X\mathbbm1_{[\![0,\tau_N]\!]}$
and $X\mathbbm1_{[\![0,\tau_N]\!]}$ have the same 
distribution on $C^0(\erre_+; H)$.
Recalling that again that $\tau_N\nearrow\tau$
almost surely as $N\to\infty$, 
we conclude that $X^{\tau}$ 
and $Y^{\tau}$ have the same distribution on $C^0(\erre_+; H)$.

Finally, it remains to show that the conclusion holds also when
$G\in \cL^2(H, D(A^{\delta'}))$ for some $\delta'>0$,
and $\alpha=\frac12$ and $\beta=0$. The only difference concerns 
the estimate on the stochastic convolution.
To this end,
one has by \cite[Thms.~3.3--3.5]{vNVW} that 
\begin{align*}
  \int_0^{\cdot\wedge\tau} S((\cdot\wedge\tau)-s)G\d W(s) \in 
  \bigcap_{\theta>0}C^0([0,T]; D(A^{\frac12+\delta'-\theta}))\,, \quad\forall\,T>0\,.
\end{align*}
In the case $\alpha=\frac12$ and $\beta=0$, since $\delta'>0$
one has in particular that 
\[
  \int_0^{\cdot\wedge\tau} S((\cdot\wedge\tau)-s)G\d W(s) \in 
  C^0([0,T]; D(A^{\frac12}))\,, \quad\forall\,T>0\,,
\]
and the same argument as above allows to conclude that 
\[
  X^\tau,Y^\tau \in C^0((0,+\infty); D(A^{\frac12}))\,, \quad\P\text{-a.s.}
\]
The argument follows then as before, and this concludes the proof of Theorem~\ref{thm3}.

\subsection{Proof of Corollary~\ref{thm4}}
We set $\tilde\alpha:=\alpha+\gamma$ and 
$\tilde\beta:=\beta-\gamma$, and note that by assumption 
one has $\tilde\alpha\in[0,\xi)$ and $\tilde\beta\in[0,\frac12)$.
Moreover, we note that if $W$ is a cylindrical Wiener process on $H$, then
$\tilde W:=A^\gamma W$ is a cylindrical Wiener process on $D(A^{-\gamma})$.
Hence, the main idea is to work on the space
\[
  \tilde H:=D(A^{-\gamma})\,.
\]
By setting $\tilde e_k:=\lambda_k^\gamma e_k$ for all $k\in\enne$, 
it is immediate to check that $(\tilde e_k)_k$ is a complete orthonormal system of
$\tilde H$ such that $A\tilde e_k=\lambda_k\tilde e_k$ for all $k\in\enne$.
Consequently, one can define the operator $\tilde A$ on $\tilde H$ as the natural 
extension of $A$. More precisely, $\tilde A$ is the linear operator on $\tilde H$
with domain
\[
  D(\tilde A)=\left\{y\in \tilde H: \sum_{k=0}^\infty
  \lambda_k^2|(y,\tilde e_k)_{D(A^{-\gamma})}|^2
  <+\infty\right\}
\]
and
\[
  \tilde Ay=\sum_{k=0}^\infty\lambda_k(y, \tilde e_k)_{D(A^{-\gamma})}\tilde e_k\,, 
  \quad y\in D(\tilde A)\,.
\] 
Note that, for every $s\geq0$, a direct computation yields that 
\[
  D(\tilde A^s)= \left\{y\in \tilde H: \sum_{k=0}^\infty
  \lambda_k^{2s}|(y, \tilde e_k)_{D(A^{-\gamma})}|^2
  <+\infty\right\}
  = \left\{y\in \tilde H: \sum_{k=0}^\infty
  \lambda_k^{2s-2\gamma}|(y, e_k)_{H}|^2
  <+\infty\right\}
\]
so that
\[ 
  D(\tilde A^{s}) = D(A^{s-\gamma}) \quad\forall\,s\geq0\,,
\]
as well as $\tilde Ay=Ay$ for all $y\in D(A)$. Hence, assumption \ref{H1}
is satisfied by the operator $\tilde A$ on the space $\tilde H$.

Moreover, let us show now that assumption \ref{H3} is satisfied on the space 
$\tilde H$ with $\delta=0$ and $G=I$, i.e.~with 
$Q_t=\frac12A^{-1}(I-S(2t))$. To this end, 
we note that if $T\in\cL(\tilde H, \tilde H)$ with $T_{|H}\in\cL(H,H)$
and $T$ commutes with $A$,
then, for all $p>1$, $T_{|H}\in\cL^p(H,H)$ if and only if $T\in\cL^p(\tilde H, \tilde H)$.
Indeed, one has that 
\[
  \sum_{k=0}^\infty\|T\tilde e_k\|_{D(A^{-\gamma})}^p
  =\sum_{k=0}^\infty\|A^{-\gamma}TA^\gamma e_k\|_H^p
  =\sum_{k=0}^\infty\|Te_k\|_H^p\,.
\]
Consequently, by recalling that we use the same notation $S$
for the extension of the semigroup generated by $A$ on $\tilde H$, 
since by assumption \ref{H3} is satisfied with $\delta=0$, we have
\[
 \int_0^Tt^{-2\xi}\|S(t)\|^2_{\cL^2(\tilde H,\tilde H)}\,\d t
 =\int_0^Tt^{-2\xi}\|S(t)\|^2_{\cL^2(H,H)}\,\d t <+\infty \quad\forall\,T>0
\]
and, 
\begin{align*}
 &\int_0^{+\infty} e^{-\lambda t}
 \norm{Q_t^{-\frac12}S(t)}_{\cL^4(\tilde H,\tilde H)}^{2(1-\vartheta)}
 \norm{Q_t^{-\frac12}S(2t)}^{\vartheta}_{\cL^2(\tilde H,\tilde H)}\,\d t\\
 &\qquad=\int_0^{+\infty} e^{-\lambda t}
 \norm{Q_t^{-\frac12}S(t)}_{\cL^4(H, H)}^{2(1-\vartheta)}
 \norm{Q_t^{-\frac12}S(2t)}^{\vartheta}_{\cL^2(H, H)}\,\d t<+\infty\,.
\end{align*}
This shows then that the operator $\tilde A$ satisfies assumption \ref{H3}
on the space $\tilde H$ with $\delta=0$.

Furthermore, since one has that 
\[
D(A^{\alpha})=D(\tilde A^{\alpha+\gamma})=
D(\tilde A^{\tilde \alpha})\,, \qquad
D(A^{-\beta})=D(\tilde A^{\gamma-\beta})=D(\tilde A^{-\tilde\beta})\,,
\]
the operator $B$ is well-defined, measurable, and bounded as an operator 
\[
  B:D(\tilde A^{\tilde \alpha}) \to D(\tilde A^{-\tilde\beta})
\]
and satisfies \ref{H2} with $\tilde\alpha$ and $\tilde\beta$.

Taking all these remarks into account, equation \eqref{eq:-gamma}
can be viewed on the space $\tilde H$ in the form 
\[
  \d X + \tilde A X\,\d t = B(X)\,\d t + \d \tilde W\,.
\]
As we have checked assumptions \ref{H1}--\ref{H2}--\ref{H3}
and $B$ is bounded, the existence Theorem~\ref{thm1} ensures that for all
$x\in \tilde H$ there exists indeed a global weak solution
\[
  X\in C^0(\erre_+; \tilde H)\cap 
  L^1_{loc}(\erre_+; D(\tilde A^{\tilde\alpha}))\,, \quad\P\text{-a.s.}\,,
\]
i.e.
\[
  X\in C^0(\erre_+; D(A^{-\gamma}))\cap 
  L^1_{loc}(\erre_+; D(A^{\alpha}))\,, \quad\P\text{-a.s.}\,.
\]
Finally, uniqueness in law on $C^0(\erre_+; D(A^{-\gamma}))$
is a direct consequence of Theorem~\ref{thm2}.

\section{Examples and applications}
\label{sec:appl}

We collect here some class of equations for which we can 
prove weak uniqueness by noise as an application of 
Theorems~\ref{thm2}-\ref{thm3}.

First of all, as it was pointed out in Remark~\ref{rmk:hs},
let us state the following lemma.

\begin{lem}\label{lem:hs}
  Assume that $G\in\cL^2(H,H)$. Then, conditions \eqref{eq:cont_time}
and \eqref{eq:L4} are satisfied for every $\xi\in(0,\frac12)$ and 
  $\vartheta\in(\frac{2\delta}{2\delta+1}, 1)$
\end{lem}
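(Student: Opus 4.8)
The plan is to exploit that, by assumption \ref{H3}, the operator $G$ is positive self-adjoint and commutes with $A$, so that $G$, $Q=G^2$, $S(t)$ and $Q_t$ are all simultaneously diagonalised by the eigenbasis $(e_k)_k$ of $A$. Writing $Ge_k=g_k e_k$ with $g_k>0$, the identification $G(H)=D(A^\delta)$ together with the fact that $G^{-1}\colon V_{2\delta}\to H$ is an isomorphism (as already used in Lemma~\ref{lem_strong}) gives the two-sided bound $g_k\simeq\lambda_k^{-\delta}$, and the Hilbert--Schmidt hypothesis $G\in\cL^2(H,H)$ amounts exactly to $\sum_k g_k^2<+\infty$. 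Condition \eqref{eq:cont_time} is then immediate: since $\norm{S(t)G}_{\cL^2(H,H)}^2=\sum_k g_k^2 e^{-2t\lambda_k}\le\sum_k g_k^2$, the integrand is dominated by $t^{-2\xi}\norm{G}_{\cL^2(H,H)}^2$, which is integrable on $(0,T)$ precisely because $\xi<\tfrac12$.

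For \eqref{eq:L4} I would first compute the two Schatten norms explicitly on the eigenbasis. Using the formula for $Q_t^{-1/2}S(t)e_k$ established in the proof of Lemma~\ref{lem_strong} (with $\gamma=0$), one finds that $Q_t^{-1/2}S(t)G$ and $Q_t^{-1/2}S(2t)Q$ are diagonal with entries
\[
a_k(t)=\sqrt2\,\lambda_k^{1/2}\frac{e^{-t\lambda_k}}{\sqrt{1-e^{-2t\lambda_k}}},\qquad
b_k(t)=\sqrt2\,\lambda_k^{1/2}g_k\frac{e^{-2t\lambda_k}}{\sqrt{1-e^{-2t\lambda_k}}},
\]
respectively (note that the factor $g_k$ cancels in $a_k$). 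Hence $\norm{Q_t^{-1/2}S(t)G}_{\cL^4(H,H)}^4=\sum_k a_k(t)^4$ and $\norm{Q_t^{-1/2}S(2t)Q}_{\cL^2(H,H)}^2=\sum_k b_k(t)^2$.

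The key step is a uniform-in-$t$ estimate of these two sums, obtained by scaling out the time variable. Writing $\lambda_k^2=g_k^2\,(g_k^{-2}\lambda_k^2)\le C\,g_k^2\lambda_k^{2+2\delta}$ (using $g_k^{-1}\lesssim\lambda_k^{\delta}$) and setting $r=t\lambda_k$, the elementary bound
\[
\sup_{r>0}\ r^{2+2\delta}\frac{e^{-4r}}{(1-e^{-2r})^2}<+\infty
\]
yields $a_k(t)^4\lesssim g_k^2\,t^{-(2+2\delta)}$, and therefore $\sum_k a_k(t)^4\lesssim t^{-(2+2\delta)}\norm{G}_{\cL^2(H,H)}^2$. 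Likewise, $\sup_{r>0}r\,e^{-4r}(1-e^{-2r})^{-1}<+\infty$ gives $b_k(t)^2\lesssim g_k^2\,t^{-1}$ and hence $\sum_k b_k(t)^2\lesssim t^{-1}\norm{G}_{\cL^2(H,H)}^2$, both valid for every $t>0$. Consequently
\[
\norm{Q_t^{-1/2}S(t)G}_{\cL^4(H,H)}^{2(1-\vartheta)}
\norm{Q_t^{-1/2}S(2t)Q}_{\cL^2(H,H)}^{\vartheta}
\lesssim t^{-\rho},\qquad
\rho=(1+\delta)(1-\vartheta)+\tfrac\vartheta2=(1+\delta)-\vartheta\bigl(\delta+\tfrac12\bigr).
\]
A direct computation shows that $\rho<1$ if and only if $\vartheta>\frac{2\delta}{2\delta+1}$, so under the stated range of $\vartheta$ the integrand $e^{-\lambda t}t^{-\rho}$ is integrable near $t=0$; the exponential factor $e^{-\lambda t}$ takes care of the tail at $t=+\infty$, and \eqref{eq:L4} follows.

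The only genuinely delicate point is the estimate of $\sum_k a_k(t)^4$: the coefficients $a_k$ carry no factor $g_k$, so the sum would diverge without additional input, and it is precisely the Hilbert--Schmidt summability $\sum_k g_k^2<+\infty$ (equivalently $\sum_k\lambda_k^{-2\delta}<+\infty$) reinserted through the factorisation $\lambda_k^2\lesssim g_k^2\lambda_k^{2+2\delta}$ that renders it finite and produces the exponent $2+2\delta$. Getting this exponent right is exactly what matches the threshold $\vartheta>\frac{2\delta}{2\delta+1}$ appearing in the statement.
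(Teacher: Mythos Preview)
Your proof is correct and follows essentially the same approach as the paper. The only cosmetic difference is that the paper introduces a free parameter $\eps$ in the $\cL^4$-estimate, proves the bound $\norm{Q_t^{-1/2}S(t)G}_{\cL^4}^4\lesssim t^{-(2+\eps)}\sum_k\lambda_k^{-\eps}$, shows separately that $\sum_k\lambda_k^{-2\delta}<+\infty$ (via the same isomorphism $G^{-1}\colon V_{2\delta}\to H$ you invoke), and then optimises over $\eps\ge2\delta$; you instead plug in the endpoint value $\eps=2\delta$ directly through the factorisation $\lambda_k^2\lesssim g_k^2\lambda_k^{2+2\delta}$, which is a slightly cleaner route to the same threshold $\vartheta>\tfrac{2\delta}{2\delta+1}$.
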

\begin{proof}
  It holds that
  \[
  \int_0^Tt^{-2\xi}\|S(t)G\|^2_{\cL^2(H,H)}\,\d t
  \leq\|G\|_{\cL^2(H,H)}^2\int_0^Tt^{-2\xi}\,\d t<+\infty\quad\forall\,\xi\in(0,1/2)\,.
  \]
  Moreover, we have
\begin{align*}
  Q_t^{-\frac12}S(t)G&=\sqrt2A^{\frac12}(I-S(2t))^{-\frac12}S(t)\,,
\end{align*}
so that, for every $\eps>0$,
\begin{align*}
  \sum_{k=0}^\infty\norm{Q_t^{-\frac12}S(t)Ge_k}_{H}^4
  &=4\sum_{k=0}^\infty\lambda_k^2\frac{e^{-4t\lambda_k}}{(1-e^{-2t\lambda_k})^2}\\
  &=4\sum_{k=0}^\infty\lambda_k^2\frac1{(t\lambda_k)^{2+\eps}}
  \frac{(t\lambda_k)^{2+\eps}e^{-4t\lambda_k}}{(1-e^{-2t\lambda_k})^2}\\
  &\leq 4\left(\max_{r\geq0}\frac{r^{2+\eps}e^{-4r}}{(1-e^{-2r})^2}\right)
  \frac1{t^{2+\eps}}\sum_{k=0}^\infty\frac1{\lambda_k^\eps}\,.
\end{align*}
Recalling that $G:V_{2\delta}\to H$
is an isomorphism, one has
\begin{align*}
  \sum_{k=0}^\infty\lambda_k^{-2\delta}
  &=\sum_{k=0}^\infty\lambda_k^{-2\delta}\|e_k\|_H^2
  =\sum_{k=0}^\infty\|G^{-1}A^{-\delta}Ge_k\|_H^2\\
  &\leq\|G^{-1}\|_{\cL(V_{2\delta, H})}\|A^{-\delta}\|_{\cL(H,V_{2\delta})}
  \sum_{k=0}^\infty\|Ge_k\|_H^2<+\infty
\end{align*}
since $G\in \cL^2(H,H)$.
This shows that, for all $t>0$, $\eps\geq 2\delta$, and
$\vartheta\in(0,1)$,
\[
  Q_t^{-\frac12}S(t)G\in\cL^4(H,H) \qquad\text{and}\qquad
  \norm{Q_t^{-\frac12}S(t)G}_{\cL^4(H,H)}^{2(1-\vartheta)}
  \leq C_{\eps,\vartheta}t^{-(1+\frac\eps2)(1-\vartheta)}
\]
for some constant $C_{\eps,\vartheta}>0$ independent of $t$.
Similarly, it holds that 
\begin{align*}
  Q_t^{-\frac12}S(2t)Q&=\sqrt2A^{\frac12}(I-S(2t))^{-\frac12}S(2t)G
\end{align*}
so that, for every $\sigma>0$,
\begin{align*}
  \sum_{k=0}^\infty\norm{Q_t^{-\frac12}S(2t)Qe_k}_{H}^2
  &=2\sum_{k=0}^\infty\lambda_k
  \frac{e^{-4t\lambda_k}}{1-e^{-2t\lambda_k}}\|Ge_k\|_H^2\\
  &\leq2\left(\max_{r\geq0}\frac{re^{-4r}}{1-e^{-2r}}\right)
  \frac1{t}\|G\|^2_{\cL^2(H,H)}\,.
\end{align*}
This shows that, for all $t>0$ and
$\vartheta\in(0,1)$, one has
\[
  Q_t^{-\frac12}S(2t)Q\in\cL^2(H,H) \qquad\text{and}\qquad
  \norm{Q_t^{-\frac12}S(2t)Q}_{\cL^2(H,H)}^{\vartheta}
  \leq C_{\sigma,\vartheta}t^{-\frac\vartheta2}
\]
for some constant $C_{\sigma,\vartheta}>0$ independent of $t$.
Putting everything together, condition \eqref{eq:L4} is satisfied 
provided that $\eps\geq2\delta$ is chosen with 
$(1+\frac\eps2)(1-\vartheta)+\frac\vartheta2<1$, i.e.~$\eps(1-\vartheta)<\vartheta$.
It is easy to see that this is possible if $\vartheta\in(\frac{2\delta}{2\delta+1}, 1)$.
\end{proof}

It what follows, $\OO\subset\erre^d$ ($d\geq1$) is a smooth bounded domain,
$L^2_0(\OO)$ denotes the closed subspace of $L^2(\OO)$ of elements
with null space average,
and we use the notation 
\[
  A_D:L^2(\OO)\to L^2(\OO)\,, \qquad
  A_Dv:=-\Delta v\,, \quad v\in D(A_D):=H^2(\OO)\cap H^1_0(\OO)\,,
\]
and
\[
  A_N:L^2_0(\OO)\to L^2_0(\OO)\,, \qquad
  A_Nv:=-\Delta v\,, \quad v\in D(A_N):=\left\{v\in L^2_0(\OO)\cap H^2(\OO):
  \partial_{\bf n}v=0\right\}\,,
\]
for the Laplace operator with Dirichlet and Neumann boundary conditions, respectively.
Note that both $A_D$ and $A_N$ satisfy \ref{H1}.
Before moving to specific examples, we collect here 
some useful properties of the Laplace operator and its 
fractional powers. These will be useful 
in order to check assumptions \ref{H2}-\ref{H3}.
In particular, in the notation of the paper, 
we are interested in the choice $G=A^{-\delta}$ for some $\delta\in[0,1)$,
where $A$ is either $A_D$ or $A_N$.
\begin{prop}
  \label{prop:lap}
  Let $\delta\in[0,1)$ be fixed, 
  let $A=A_D$ or $A=A_N$, 
  let $S$ be the $C^0$-semigroup generated by $A$, 
  and let $Q_t$ be defined as in \eqref{Qt} with 
  $G=A^{-\delta}$ and $Q=A^{-2\delta}$, for $t\geq0$.
  Then, it holds that:
  \begin{enumerate}[{label=(\alph*)}]
  \item \label{it_Lap_a} $G\in\cL^2(L^2(\OO), L^2(\OO))$ 
  if and only if $\delta>\frac d4$;
  \item \label{it_Lap_a'} $G\in\cL^2(L^2(\OO), D(A^{\delta'}))$ 
  if and only if $\delta>\frac d4+\delta'$;
  \item \label{it_Lap_b} $Q_t \in \cL^1(L^2(\OO), L^2(\OO))$ if and only if 
  $\delta>\frac d4-\frac12$, for all $t>0$;
  \item \label{it_Lap_b'} if $\delta>\frac d4-\frac12$, condition \eqref{eq:cont_time}
  is satisfied for all $\xi\in(0,\delta+\frac12-\frac d4)$;
  \item \label{it_Lap_c} $Q_t^{-\frac12}S(t)G\in \cL^4(L^2(\OO), L^2(\OO))$ for all $t>0$. Moreover, 
  for all $\eps>\frac d2$ there exists a constant $C_\eps>0$ such that 
  \[
  \norm{Q_t^{-\frac12}S(t)G}_{\cL^4(L^2(\OO), L^2(\OO))}^2
  \leq C_\eps t^{-(1+\frac\eps2)} \quad\forall\,t>0\,;
  \]
  \item \label{it_Lap_d} $Q_t^{-\frac12}S(2t)Q\in \cL^2(L^2(\OO), L^2(\OO))$ for all $t>0$. Moreover, 
   for all $\sigma>\frac d2-2\delta$ there exists a constant $C_\sigma>0$ such that 
  \[
  \norm{Q_t^{-\frac12}S(2t)Q}_{\cL^2(L^2(\OO), L^2(\OO))}
  \leq C_\sigma t^{-\frac12(1+\sigma)} \quad\forall\,t>0\,;
  \]
  \item \label{it_Lap_e} if $\delta>\frac d4-\frac12$, there exists $\vartheta\in(0,1)$
  such that condition \eqref{eq:L4} is satisfied.
  \end{enumerate}
\end{prop}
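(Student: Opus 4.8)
The plan is to reduce every assertion to an explicit series over the eigenvalues $(\lambda_k)_k$ of $A$ and to control these series by the Weyl asymptotic law for the Dirichlet/Neumann Laplacian on a smooth bounded domain, namely $\lambda_k\asymp k^{2/d}$ as $k\to\infty$. The only consequence I will use repeatedly is the two-sided summability threshold
\[
\sum_{k}\lambda_k^{-s}<+\infty \iff s>\tfrac d2 ,
\]
which is precisely what produces the ``if and only if'' statements in \ref{it_Lap_a}--\ref{it_Lap_b}. Since $G=A^{-\delta}$, $Q=A^{-2\delta}$ and $Q_t=\frac12A^{-1-2\delta}(I-S(2t))$ are all diagonal in the orthonormal basis $(e_k)_k$, each operator norm becomes a transparent sum, and the fractional powers merely shift the exponent of $\lambda_k$.

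First I would dispose of \ref{it_Lap_a}, \ref{it_Lap_a'}, \ref{it_Lap_b}. For \ref{it_Lap_a} one has $\|G\|_{\cL^2(L^2(\OO),L^2(\OO))}^2=\sum_k\lambda_k^{-2\delta}$, finite iff $\delta>\frac d4$; for \ref{it_Lap_a'} the same computation in the $D(A^{\delta'})$-norm gives $\sum_k\lambda_k^{-2(\delta-\delta')}$, finite iff $\delta>\frac d4+\delta'$; for \ref{it_Lap_b} one writes $\operatorname{Tr}Q_t=\frac12\sum_k\lambda_k^{-1-2\delta}(1-e^{-2t\lambda_k})$ and notes that, for fixed $t>0$, the factor $1-e^{-2t\lambda_k}$ is bounded and bounded away from zero for large $k$, so convergence is governed by $\sum_k\lambda_k^{-1-2\delta}$, i.e.\ by $1+2\delta>\frac d2$. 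In each case the lower Weyl bound forces divergence in the complementary range, giving the equivalences.

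Next I would treat the ``rate'' statements \ref{it_Lap_b'}, \ref{it_Lap_c}, \ref{it_Lap_d}. For \ref{it_Lap_b'} I would apply Tonelli and the elementary identity $\int_0^\infty t^{-2\xi}e^{-2t\lambda_k}\,\d t=C_\xi\lambda_k^{2\xi-1}$ (valid for $\xi<\frac12$), reducing \eqref{eq:cont_time} to $\sum_k\lambda_k^{2\xi-1-2\delta}$, finite precisely when $\xi<\frac12+\delta-\frac d4$. For \ref{it_Lap_c} and \ref{it_Lap_d} I would diagonalise explicitly, using
\[
Q_t^{-\frac12}S(t)G\,e_k=\sqrt2\,\lambda_k^{\frac12}\frac{e^{-t\lambda_k}}{\sqrt{1-e^{-2t\lambda_k}}}\,e_k,\qquad
Q_t^{-\frac12}S(2t)Q\,e_k=\sqrt2\,\lambda_k^{\frac12-\delta}\frac{e^{-2t\lambda_k}}{\sqrt{1-e^{-2t\lambda_k}}}\,e_k,
\]
and then, exactly as in the proof of Lemma~\ref{lem:hs}, factor out a power $(t\lambda_k)^{1+\frac\eps2}$ (resp.\ $(t\lambda_k)^{\frac{1+\sigma}2}$) so as to split each series into a finite maximum $\max_{r\ge0}\frac{r^{2+\eps}e^{-4r}}{(1-e^{-2r})^2}$ (resp.\ $\max_{r\ge0}\frac{r^{1+\sigma}e^{-4r}}{1-e^{-2r}}$) times a residual tail $\sum_k\lambda_k^{-\eps}$ (resp.\ $\sum_k\lambda_k^{-2\delta-\sigma}$). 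The tail converges when $\eps>\frac d2$ (resp.\ $\sigma>\frac d2-2\delta$), yielding the claimed bounds $C_\eps t^{-(1+\frac\eps2)}$ and $C_\sigma t^{-\frac12(1+\sigma)}$, uniformly in $t>0$.

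The last step, \ref{it_Lap_e}, is the only one requiring care, and it is where the hypothesis $\delta>\frac d4-\frac12$ is used sharply. Inserting the bounds of \ref{it_Lap_c}--\ref{it_Lap_d} into \eqref{eq:L4}, the integrand is controlled near $t=0$ by $t^{-[(1+\frac\eps2)(1-\vartheta)+\frac{1+\sigma}2\vartheta]}$, while the factor $e^{-\lambda t}$ handles large $t$; integrability at the origin thus demands
\[
\left(1+\tfrac\eps2\right)(1-\vartheta)+\tfrac{1+\sigma}2\,\vartheta<1 .
\]
The main obstacle is to exhibit admissible parameters $\eps>\frac d2$, $\sigma>\max\{0,\frac d2-2\delta\}$, $\vartheta\in(0,1)$ satisfying this. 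I would do so by letting $\vartheta\to1^-$, which sends the left-hand side to $\frac{1+\sigma}2$, and then choosing $\sigma$ just above $\max\{0,\frac d2-2\delta\}$: the constraint $\frac{1+\sigma}2<1$ reduces to $\sigma<1$, which is feasible precisely because $\delta>\frac d4-\frac12$ forces $\frac d2-2\delta<1$. Selecting such a triple and observing that the exponent is continuous in $\vartheta$ gives strict integrability near the origin, and this closes the argument.
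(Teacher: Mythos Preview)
Your proposal is correct and follows essentially the same route as the paper's proof: diagonalise in the eigenbasis, invoke the Weyl asymptotics $\lambda_k\asymp k^{2/d}$, and for \ref{it_Lap_c}--\ref{it_Lap_d} use the ``factor-out-$(t\lambda_k)^{\text{power}}$'' trick exactly as in Lemma~\ref{lem:hs}. The only cosmetic differences are that for \ref{it_Lap_b'} you swap sum and integral via Tonelli and the Gamma identity, whereas the paper uses the same max-trick as in \ref{it_Lap_c}--\ref{it_Lap_d} to obtain a pointwise bound $\|S(t)G\|_{\cL^2}^2\le C_\rho t^{-\rho}$ first; and for \ref{it_Lap_e} you argue by continuity in $\vartheta$ near $1$, while the paper simplifies the exponent condition algebraically to $\vartheta>\tfrac{d}{2(1+2\delta)}$ --- both yield the same feasibility region.
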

As a direct consequence, we have the following corollary.
\begin{cor}
  \label{cor:lap}
  In the setting of Proposition~\ref{prop:lap}, for every $t>0$ 
  the following conditions hold.
  \begin{enumerate}[{label=(\roman*)}]
  \item If $d=1$: $G\in \cL^2(L^2(\OO), L^2(\OO))$ if and only if $\delta>\frac14$,
  $Q_t\in\cL^1(L^2(\OO), L^2(\OO))$ for every $\delta\geq0$, 
  and conditions \eqref{eq:cont_time} and \eqref{eq:L4} are satisfied 
  for every $\delta\geq0$ and $\xi\in(0,\frac14+\delta)$.
  \item If $d=2$: $G\notin \cL^2(L^2(\OO), L^2(\OO))$, 
  $Q_t\in\cL^1(L^2(\OO), L^2(\OO))$ for every $\delta>0$, 
  and conditions \eqref{eq:cont_time} and \eqref{eq:L4} are satisfied for every $\delta>0$
  and $\xi\in(0,\delta)$.
  \item If $d=3$: $G\notin \cL^2(L^2(\OO), L^2(\OO))$, 
  $Q_t\in\cL^1(L^2(\OO), L^2(\OO))$ for every $\delta>\frac14$, 
  and conditions \eqref{eq:cont_time} and \eqref{eq:L4} 
  is satisfied for every $\delta>\frac14$ and $\xi\in(0,\delta-\frac14)$.
  \end{enumerate}
\end{cor}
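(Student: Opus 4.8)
The statement is a direct numerical specialisation of Proposition~\ref{prop:lap}, so the plan is simply to substitute $d=1,2,3$ into each of its quantitative items, keeping in mind the standing restriction $\delta\in[0,\frac12)$ inherited from assumption \ref{H3}.

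First I would treat the Hilbert--Schmidt property of $G$ via item \ref{it_Lap_a}, which asserts $G\in\cL^2(L^2(\OO),L^2(\OO))$ if and only if $\delta>\frac d4$. For $d=1$ this reads $\delta>\frac14$. For $d=2$ and $d=3$ the thresholds are $\frac12$ and $\frac34$ respectively; since both exceed the admissible upper bound $\delta<\frac12$, the condition can never be met, so $G\notin\cL^2(L^2(\OO),L^2(\OO))$ in these two cases. Next, for the trace-class property I would invoke item \ref{it_Lap_b}, according to which $Q_t\in\cL^1(L^2(\OO),L^2(\OO))$ if and only if $\delta>\frac d4-\frac12$; evaluating the right-hand side at $d=1,2,3$ gives the thresholds $-\frac14$, $0$, $\frac14$, which is precisely the claim that $Q_t$ is trace-class for every $\delta\geq0$, every $\delta>0$, and every $\delta>\frac14$ in the three respective dimensions.

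Finally, for the two structural conditions on the noise I would combine items \ref{it_Lap_b'} and \ref{it_Lap_e}. Item \ref{it_Lap_e} yields \eqref{eq:L4} under exactly the constraint $\delta>\frac d4-\frac12$ already used above, while item \ref{it_Lap_b'} gives \eqref{eq:cont_time} for every $\xi\in(0,\delta+\frac12-\frac d4)$. Reading off the upper endpoint $\delta+\frac12-\frac d4$ at $d=1,2,3$ produces $\delta+\frac14$, $\delta$, and $\delta-\frac14$, matching the stated admissible ranges for $\xi$. No substantial difficulty is present, since every assertion reduces to arithmetic in the parameters of the proposition; the only point meriting a word of care is the interplay with the bound $\delta<\frac12$, which is exactly what forces $G\notin\cL^2(L^2(\OO),L^2(\OO))$ when $d\in\{2,3\}$.
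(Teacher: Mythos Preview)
Your proposal is correct and matches the paper's approach: the paper simply states the corollary as ``a direct consequence'' of Proposition~\ref{prop:lap} without writing out the substitutions, and what you have done is exactly that specialisation. Your remark about the implicit restriction $\delta<\frac12$ from \ref{H3} being needed to force $G\notin\cL^2(L^2(\OO),L^2(\OO))$ for $d\in\{2,3\}$ is a useful clarification that the paper leaves tacit.
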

\begin{proof}[Proof of Proposition~\ref{prop:lap}]
  Let $(e_k)_k$ be a complete orthonormal system of $H$ 
  made of eigenvectors of $A$,
  and let $(\lambda_k)_k$ be the corresponding eigenvalues.
  Recall that $\lambda_k\sim k^{\frac 2d}$ as $k\to\infty$.
  
  {\sc Points} \ref{it_Lap_a}-\ref{it_Lap_a'}. We have that
  \[
  \sum_{k=0}^\infty\norm{Ge_k}_{D(A^{\delta'})}^2
  =\sum_{k=0}^\infty\norm{A^{\delta'}Ge_k}_{L^2(\OO)}^2
  =\sum_{k=0}^\infty\norm{A^{-(\delta-\delta')}e_k}_{L^2(\OO)}^2
  =\sum_{k=0}^\infty\lambda_k^{-2(\delta-\delta')}\,,
  \]
  where $\lambda_k^{-2(\delta-\delta')}
  \sim k^{-\frac{4}d(\delta-\delta')}$ as $k\to\infty$, so the series 
  converges if and only if $\delta-\delta'>\frac d4$.
  
  {\sc Point} \ref{it_Lap_b}. For every $t\geq0$, one has
  \[
  Q_t=\frac12 A^{-1}Q(I-S(2t))=\frac12A^{-(1+2\delta)}(I-S(2t))
  \]
  so that by \ref{it_Lap_a} we have that $Q_t\in\cL^1(L^2(\OO),L^2(\OO))$
  if and only if $1+2\delta>\frac{d}2$,
  i.e., $\delta>\frac d4-\frac12$.
  
  {\sc Point} \ref{it_Lap_b'}. For every $t>0$ and $\rho>0$ we have 
  \begin{align*}
  \|S(t)G\|^2_{\cL^2(H,H)}&=\sum_{k=0}^\infty\|S(t)A^{-\delta}e_k\|_H^2
  =\sum_{k=0}^\infty e^{-2t\lambda_k}\lambda_k^{-2\delta}
  =\frac1{t^\rho}\sum_{k=0}^\infty \frac{e^{-2t\lambda_k}(t\lambda_k)^{\rho}}{\lambda_k^{2\delta+\rho}}\\
  &\leq \max_{r\geq0}\left(e^{-2r}r^\rho\right)\frac1{t^\rho}
  \sum_{k=0}^\infty\frac1{\lambda_k^{2\delta+\rho}}\,.
  \end{align*}
  Since $\lambda_k\sim k^{\frac2d}$ as $k\to\infty$, the series on the right-hand side
  converges if and only if $\frac2d(2\delta+\rho)>1$, hence $\rho>\frac{d}2-2\delta$.
  Now, condition \eqref{eq:cont_time} is satisfied for some $\xi\in(0,\frac12)$
  provided that $2\xi+\rho<1$. These two conditions are possible if and only if
  \[
  \frac d2-2\delta<1-2\xi\,.
  \]
  Consequently, if $\delta>\frac d4-\frac12$, there exists $\xi\in(0,\frac12)$
  such that $\delta>\frac d4+\xi-\frac12$ (e.g.~$\xi\in(0,\delta+\frac12-\frac d4)$):
  hence, it holds also that $\frac d2-2\delta<1-2\xi$, and
  one can choose $\rho \in (\frac d2-2\delta, 1-2\xi)$. The computations above imply then 
  that condition \eqref{eq:cont_time} is satisfied.
  
  {\sc Point} \ref{it_Lap_c}. We have
\begin{align*}
  Q_t^{-\frac12}S(t)G&=\sqrt2A^{\frac12+\delta}(I-S(2t))^{-\frac12}S(t)A^{-\delta}\\
  &=\sqrt2A^{\frac12}(I-S(2t))^{-\frac12}S(t)\,,
\end{align*}
so that, for every $\eps>0$,
\begin{align*}
  \sum_{k=0}^\infty\norm{Q_t^{-\frac12}S(t)Ge_k}_{L^2(\OO)}^4
  &=4\sum_{k=0}^\infty\lambda_k^2\frac{e^{-4t\lambda_k}}{(1-e^{-2t\lambda_k})^2}\\
  &=4\sum_{k=0}^\infty\lambda_k^2\frac1{(t\lambda_k)^{2+\eps}}
  \frac{(t\lambda_k)^{2+\eps}e^{-4t\lambda_k}}{(1-e^{-2t\lambda_k})^2}\\
  &\leq 4\left(\max_{r\geq0}\frac{r^{2+\eps}e^{-4r}}{(1-e^{-2r})^2}\right)
  \frac1{t^{2+\eps}}\sum_{k=0}^\infty\frac1{\lambda_k^\eps}\,.
\end{align*}
Recalling that $\lambda_k\sim k^{\frac2d}$ as $k\to\infty$,
the series on the right-hand side converges if and only if $\frac{2\eps}d>1$,
i.e., $\eps>\frac d2$. This shows that, for all $t\geq0$, $\eps>\frac d2$, and
$\vartheta>0$,
\[
  Q_t^{-\frac12}S(t)G\in\cL^4(L^2(\OO),L^2(\OO)) \qquad\text{and}\qquad
  \norm{Q_t^{-\frac12}S(t)G}_{\cL^4(L^2(\OO),L^2(\OO))}^{2(1-\vartheta)}
  \leq C_{\eps,\vartheta}t^{-(1+\frac\eps2)(1-\vartheta)}
\]
for some constant $C_{\eps,\vartheta}>0$ independent of $t$.

{\sc Point} \ref{it_Lap_d}. It holds that 
\begin{align*}
  Q_t^{-\frac12}S(2t)Q&=\sqrt2A^{\frac12+\delta}(I-S(2t))^{-\frac12}S(2t)A^{-2\delta}\\
  &=\sqrt2A^{\frac12-\delta}(I-S(2t))^{-\frac12}S(2t)\,,
\end{align*}
so that, for every $\sigma>0$,
\begin{align*}
  \sum_{k=0}^\infty\norm{Q_t^{-\frac12}S(2t)Qe_k}_{L^2(\OO)}^2
  &=2\sum_{k=0}^\infty\lambda_k^{1-2\delta}
  \frac{e^{-4t\lambda_k}}{1-e^{-2t\lambda_k}}\\
  &=2\sum_{k=0}^\infty\lambda_k^{1-2\delta}
  \frac1{(t\lambda_k)^{1+\sigma}}
  \frac{(t\lambda_k)^{1+\sigma}e^{-4t\lambda_k}}{1-e^{-2t\lambda_k}}\\
  &\leq 2\left(\max_{r\geq0}\frac{r^{1+\sigma}e^{-4r}}{1-e^{-2r}}\right)
  \frac1{t^{1+\sigma}}\sum_{k=0}^\infty\frac1{\lambda_k^{2\delta+\sigma}}\,.
\end{align*}
Recalling again that $\lambda_k\sim k^{\frac2d}$ as $k\to\infty$,
the series on the right-hand side converges if and only if $\frac2d(2\delta+\sigma)>1$,
i.e., $\sigma>\frac d2-2\delta$. 
This shows that, for all $t\geq0$, $\sigma>\frac d2-2\delta$, and
$\vartheta>0$,
\[
  Q_t^{-\frac12}S(2t)Q\in\cL^2(L^2(\OO),L^2(\OO)) \qquad\text{and}\qquad
  \norm{Q_t^{-\frac12}S(2t)Q}_{\cL^2(L^2(\OO),L^2(\OO))}^{\vartheta}
  \leq C_{\sigma,\vartheta}t^{-\frac\vartheta2(1+\sigma)}
\]
for some constant $C_{\sigma,\vartheta}>0$ independent of $t$.

{\sc Point} \ref{it_Lap_e}. Putting everything together, one has for every $\lambda>0$,
$\eps>\frac d2$, and $\sigma>\frac d2-2\delta$ that 
\begin{align*}
  &\int_0^{+\infty} e^{-\lambda t}
 \norm{Q_t^{-\frac12}S(t)G}_{\cL^4(L^2(\OO),L^2(\OO))}^{2(1-\vartheta)}
 \norm{Q_t^{-\frac12}S(2t)Q}^{\vartheta}_{\cL^2(L^2(\OO),L^2(\OO))}\,\d t\\
 &\qquad\leq C_{\eps,\vartheta}C_{\sigma,\vartheta}
 \int_{0}^{+\infty}e^{-\lambda t} \frac1{t^{(1+\frac\eps2)(1-\vartheta) 
 + \frac\vartheta2(1+\sigma)}}\,\d t\,.
\end{align*}
The integral on the right-hand side converges if and only if 
$(1+\frac\eps2)(1-\vartheta) 
 + \frac\vartheta2(1+\sigma)
 <1$, i.e., $\eps(1-\vartheta) + \sigma\vartheta<\vartheta$.
 In order to guarantee also the conditions 
 $\eps>\frac d2$ and $\sigma>\frac d2-2\delta$,
 $\delta$ and $\vartheta$ have to be chosen such that 
\[
  \frac d2(1-\vartheta) + \vartheta\left(\frac d2 -2\delta\right)<\vartheta\,.
\]
Direct computations show that this is equivalent to 
\[
  \vartheta>\frac{d}{2(1+2\delta)}\,.
\]
We can choose then $\vartheta\in(0,1)$ satisfying such inequality 
if and only if 
\[
\frac{d}{2(1+2\delta)} <1\,,
\]
i.e., $\delta>\frac d4 - \frac12$, as required.
\end{proof}

\subsection{Heat equations with perturbation}
We consider SPDEs in the form
  \begin{align*}
  \d X - \Delta X\,\d t = F(X)\,\d t + G\,\d W \quad&\text{in } (0,T)\times\OO\,,\\
  X=0\quad&\text{in } (0,T)\times\partial\OO\,,\\
  X(0)=x_0 \quad&\text{in } \OO\,,
  \end{align*}
where $\OO\subset\erre^d$ is a smooth bounded domain, 
$\Delta$ and $\nabla$ denote the Laplacian and the gradient with 
respect to the space-variables, 
$F:\erre\to\erre$ is continuous and linearly bounded, 
$x_0\in L^2(\OO)$, and $G\in \cL(L^2(\OO),L^2(\OO))$.
We define the Nemytskii-type operator 
\[
  B:L^2(\OO)\to L^2(\OO)\,, \qquad
  B(\varphi)(\xi):=F(\varphi(\xi))\,,
  \quad\xi\in\OO\,,\quad\varphi\in L^2(\OO)\,,
\]
which is measurable, linearly bounded, and strongly continuous 
(by continuity and linear boundedness of $F$). 
Then, setting $H:=L^2(\OO)$ and $G:=A_D^{-\delta}$ for some $\delta\in[0,1)$,
the abstract formulation of the problem reads
as the following evolution equation on $H$:
\[
  \d X + A_D X\,\d t = B(X)\,\d t + G\,\d W\,, \quad X(0)=x_0\,.
\]
In the notation of the paper, we have then $\alpha=\beta=0$. 
By Corollary~\ref{cor:lap}
we deduce that uniqueness in distribution holds up to
dimension $d=3$ included, with the choices
$\delta\in[0,\frac12)$ in dimension $d=1$,
$\delta\in(0,\frac12)$ in dimension $d=2$, and 
$\delta\in(\frac14,\frac12)$ in dimension $d=3$.

\subsection{Heat equations with polynomial perturbation}
We consider SPDEs in the form
  \begin{align*}
  \d X - \Delta X\,\d t  = 
  F(X)\,\d t + G\,\d W \quad&\text{in } (0,T)\times\OO\,,\\
  X=0\quad&\text{in } (0,T)\times\partial\OO\,,\\
  X(0)=x_0 \quad&\text{in } \OO\,,
  \end{align*}
where $\OO\subset\erre^d$ is a smooth bounded domain,
$F:\erre\to\erre$ is a continuous function,
$x_0\in L^2(\OO)$, and $G\in \cL(L^2(\OO),L^2(\OO))$.
We further suppose that 
$F$ behaves like a polynomial, in the sense that
there exist $p\geq2$ and $C>0$ such that 
\[
  |F(r)|\leq C(1+|r|^{p-1})\quad\forall\,r\in\erre\,.
\]
Given $r\in[\max\{2,p-1\}, 2(p-1)]$, setting $s:=\frac{r}{p-1}\in[1,2]$
one can view $B$ as an operator 
\[
  B:L^r(\OO)\to L^s(\OO)
\]
as
\begin{align*}
  B(\varphi)(\xi)&:=
  F(\varphi(\xi))\,,
  \quad\varphi\in L^r(\OO)\,,\quad\xi\in\OO\,.
\end{align*}
The idea is to choose $\alpha\in[0,\frac12]$ and $\beta\in[0,\frac12)$
such that $\alpha+\beta\in[0,\frac12]$, $D(A_D^\alpha)\embed L^r(\OO)$,
and $L^s(\OO)\embed D(A_D^{-\beta})$, so that
$B$ can be considered as
\[
  B:D(A_D^\alpha)\to D(A_D^{-\beta})
\]
given by
\[
  \ip{B(\varphi)}{\psi}_{V_{-2\beta}, V_{2\beta}}:=
  \int_\OO F(\varphi(\xi))
  \psi(\xi)\,\d\xi\,,
  \quad\varphi\in D(A_D^\alpha)\,,\psi\in D(A_D^{\beta})\,.
\]
Note that such a choice yields
a well-defined, measurable, and strongly-weakly continuous operator
(by continuity and polynomial growth of $F$).
Then, setting $H:=L^2(\OO)$ and $G:=A_D^{-\delta}$ for some $\delta\in[0,1)$,
the abstract formulation of the problem reads
as the following evolution equation on $H$:
\[
  \d X + A_D X\,\d t = B(X)\,\d t + G\,\d W\,, \quad X(0)=x_0\,.
\]
Global existence of solutions follows by classical variational arguments 
along with coercivity-type conditions in the form 
\[
  -F(r)r\geq C^{-1}|r|^p - C \quad\forall\,r\in\erre\,.
\]
If coercivity fails, or more generally if $F$ is not
necessarily nonincreasing nor polynomially-bounded, then 
only local existence can be proved.

As far as uniqueness is concerned, 
we have to carefully tune the parameters $\alpha$ and $\beta$
in order to achieve the best possible uniqueness result.
To this end, 
by the classical Sobolev embedding, the inclusions
$D(A_D^\alpha)\embed L^r(\OO)$
and $L^s(\OO)\embed D(A_D^{-\beta})$
are satisfied if
$\alpha=\frac{d}{2}(\frac12-\frac1r)$ and 
$\beta=\frac{d}2(\frac1s-\frac12)$. With such a choice, 
it is immediate to see that $\alpha\geq0$ and $\beta\geq0$.
Moreover, note that $\alpha+\beta
=\frac{d}{2}(\frac1s-\frac1r)=\frac{d(p-2)}{2r}$, so that 
$\alpha+\beta\leq\frac12$ provided that 
$r\geq d(p-2)$.
Hence, it is possible to choose $r$ such that 
$r\in[\max\{2,p-1\}, 2(p-1)]$ and $r\geq d(p-2)$
if and only if $d(p-2)\leq 2(p-1)$.
In dimensions $d=1,2$ this is always true, while in dimension 
$d=3$ this requires the limitation $p\leq4$.
Clearly, the optimal choice of $r$ is the one that minimises $\alpha$.
By definition of $\alpha$, this is the minimal admissible value of $r$,
i.e.~$r_{opt}=\max\{2,p-1,d(p-2)\}$ (with the constraint $p\leq4$ in $d=3$).
To summarise, we have then the following uniqueness results.

{\sc Dimension} $d=1$.
We can handle every growth $p\in[2,+\infty)$.
The optimal choice of $r$ is 
$r_{opt}=\max\{2,p-1,p-2\}=\max\{2,p-1\}$, yielding 
$\alpha_{opt}=\frac12(\frac12-\frac1{\max\{2,p-1\}})$.
Thanks to Corollary~\ref{cor:lap},
we can apply Theorem~\ref{thm3} and infer that weak uniqueness holds
for all $\delta\in[0,\frac12)$ and initial datum $x_0\in D(A^{\alpha_{opt}})$.

{\sc Dimension} $d=2$.
We can handle every growth $p\in[2,+\infty)$.
The optimal choice of $r$ is 
$r_{opt}=\max\{2,p-1,2(p-2)\}$, yielding 
$\alpha_{opt}=\frac12-\frac1{\max\{2,p-1, 2(p-2)\}}$.
Thanks to Corollary~\ref{cor:lap},
we can apply Theorem~\ref{thm3} and infer that weak uniqueness holds
for all $\delta\in(\alpha_{opt},\frac12)$
and initial datum $x_0\in D(A^{\alpha_{opt}})$.

{\sc Dimension} $d=3$.
We can handle every growth $p\in[2,4)$.
The optimal choice of $r$ is 
$r_{opt}=\max\{2,p-1,3(p-2)\}$, yielding 
$\alpha_{opt}=\frac32(\frac12-\frac1{\max\{2,p-1, 3(p-2)\}})$.
Note that $\alpha_{opt}\in[0,\frac12)$ since $p\in[2,4)$.
Hence, thanks to Corollary~\ref{cor:lap},
we can apply Theorem~\ref{thm3} and infer that weak uniqueness holds
for all $\delta\in(\alpha_{opt},\frac12)$
and initial datum $x_0\in D(A^{\alpha_{opt}})$..
Let us point out that here the case $p=4$ is excluded as it leads
to the choice $\alpha_{opt}=\frac12$. In particular, this would require a Hilbert-Schmidt condition on $G$ in Theorem~\ref{thm3}, which is not true in dimension $d=3$.

\subsection{Heat equations with divergence-like perturbation: 
the sub-critical case}
We consider SPDEs in the form
  \begin{align*}
  \d X - \Delta X\,\d t = (-\Delta)^\beta 
  F(X)\,\d t + G\,\d W \quad&\text{in } (0,T)\times\OO\,,\\
  X=0\quad&\text{in } (0,T)\times\partial\OO\,,\\
  X(0)=x_0 \quad&\text{in } \OO\,,
  \end{align*}
where $\OO\subset\erre^d$ is a smooth bounded domain, $\beta\in(0,\frac12)$,
$F:\erre\to\erre$ is continuous and linearly bounded, 
$x_0\in L^2(\OO)$, and $G\in \cL(L^2(\OO),L^2(\OO))$.
We define the operator 
\[
  B:L^2(\OO)\to D(A_D^{-\beta})
\]
as
\[
  \ip{B(\varphi)}{\psi}_{V_{-2\beta}, V_{2\beta}}:=
  \int_\OO F(\varphi(\xi))(A_D^\beta \psi)(\xi)\,\d\xi\,,
  \quad\varphi\in L^2(\OO)\,,\quad \psi\in D(A_D^{\beta})\,,
\]
which is measurable, linearly bounded, and strongly continuous 
(by continuity and linear boundedness of $F$). 
Then, setting $H:=L^2(\OO)$ and $G:=A_D^{-\delta}$ for some $\delta\in[0,1)$,
the abstract formulation of the problem reads
as the following evolution equation on $H$:
\[
  \d X + A_D X\,\d t = B(X)\,\d t + G\,\d W\,, \quad X(0)=x_0\,.
\]
In the notation of the paper, we have then $\alpha=0$ and $\beta>0$. Now,
thanks to Corollary~\ref{cor:lap}
we deduce that uniqueness in distribution holds up to
dimension $d=3$ included, with the choices:
\begin{itemize}
\item dimension $d=1$: 
$\beta\in(0,\frac12)$ and $\delta\in[0,\frac12-\beta)$,
\item dimension $d=2$: 
$\beta\in(0,\frac12)$ and $\delta\in(0,\frac12-\beta)$,
\item dimension $d=3$: 
$\beta\in(0,\frac14)$ and $\delta\in(\frac14,\frac12-\beta)$.
\end{itemize}

\subsection{Heat equations with divergence-like perturbation: 
the super-critical case}
We consider SPDEs in the form
  \begin{align*}
  \d X - \Delta X\,\d t = (-\Delta)^\beta 
  F(X)\,\d t + (-\Delta)^\gamma\,\d W \quad&\text{in } (0,T)\times\OO\,,\\
  X=0\quad&\text{in } (0,T)\times\partial\OO\,,\\
  X(0)=x_0 \quad&\text{in } \OO\,,
  \end{align*}
where $\OO\subset\erre$ is a smooth bounded domain, $\beta\in[\frac12,\frac34)$,
$F:\erre\to\erre$ is continuous and bounded, 
$\gamma\in(\beta-\frac12,\frac14)$ and $x_0\in D(A_D^{-\gamma})$.
We define the operator $B:L^2(\OO)\to D(A_D^{-\beta})$
as in the previous example, i.e.
\[
  \ip{B(\varphi)}{\psi}_{V_{-2\beta}, V_{2\beta}}:=
  \int_\OO F(\varphi(\xi))(A_D^\beta \psi)(\xi)\,\d\xi\,,
  \quad\varphi\in L^2(\OO)\,,\quad \psi\in D(A_D^{\beta})\,,
\]
which is measurable, bounded, and strongly continuous.
We note that thanks to Corollary~\ref{cor:lap} assumption 
\ref{H3} is satisfied with $\delta=0$ and $\xi\in(0,\frac14)$.
Hence,
the hypotheses of Corollary~\ref{thm4} are met and 
we deduce that the equation 
\[
  \d X + A_D X\,\d t = B(X)\,\d t + A^\gamma\,\d W\,, \quad X(0)=x_0
\]
admits a global weak solution which is unique in distribution,
in the sense of Corollary~\ref{thm4}.

\subsection{Heat equations with  non-divergence-like perturbation}
We consider SPDEs in the form
  \begin{align*}
  \d X - \Delta X\,\d t = 
  F((-\Delta)^\alpha X)\,\d t + G\,\d W \quad&\text{in } (0,T)\times\OO\,,\\
  X=0\quad&\text{in } (0,T)\times\partial\OO\,,\\
  X(0)=x_0 \quad&\text{in } \OO\,,
  \end{align*}
where $\OO\subset\erre^d$ is a smooth bounded domain, $\alpha\in(0,1)$,
$F:\erre\to\erre$ is continuous and linearly bounded, 
$x_0\in L^2(\OO)$, and $G\in \cL(L^2(\OO),L^2(\OO))$.
We define the operator 
\[
  B:D(A_D^{\alpha})\to L^2(\OO)
\]
as
\[
  B(\varphi)(\xi):=
  F((A_D^\alpha \varphi)(\xi))\,,
  \quad\varphi\in D(A_D^{\alpha})\,,
\]
which is measurable, linearly bounded, and strongly continuous 
(by continuity and linear boundedness of $F$). 
Then, setting $H:=L^2(\OO)$ and $G:=A_D^{-\delta}$ for some $\delta\in[0,1)$,
the abstract formulation of the problem reads
as the following evolution equation on $H$:
\[
  \d X + A_DX\,\d t = B(X)\,\d t + G\,\d W\,, \quad X(0)=x_0\,.
\]
In the notation of the paper, we have then $\alpha>0$ and $\beta=0$. 
We note straightaway that if $F$ is bounded,
thanks to Corollary~\ref{cor:lap}, 
we can apply Theorem~\ref{thm2}
and deduce that uniqueness in distribution holds 
for every $\alpha\in(0,1)$, $\delta\in[0,\frac12)$,
and initial datum in $H$, in every dimension $d=1,2,3$.
Otherwise, if $F$ is unbounded, depending on the dimension, we can proceed as follows.

{\sc Dimension} $d=1$.
Thanks to Corollary~\ref{cor:lap}, we can apply Theorem~\ref{thm3}
to deduce that uniqueness in distribution holds 
for every $\alpha\in(0,\frac12)$, $\delta\in[0,\frac12)$,
and initial datum in $D(A^\alpha)$.
Note also that if $F$ is unbounded and $\alpha=\frac12$,
we can still infer uniqueness in distribution
by the last statement of Theorem~\ref{thm3} 
for every $\delta\in(\frac14,\frac12)$.

{\sc Dimension} $d=2$.
Thanks to Corollary~\ref{cor:lap}, 
we can apply Theorem~\ref{thm3}
to deduce that uniqueness in distribution holds 
for every $\alpha\in(0,\frac12)$, $\delta\in(\alpha,\frac12)$,
and initial datum in $D(A^\alpha)$.

{\sc Dimension} $d=3$.
Thanks to Corollary~\ref{cor:lap}, 
we can apply Theorem~\ref{thm3}
to deduce that uniqueness in distribution holds 
for every $\alpha\in(0,\frac14)$, $\delta\in(\alpha+\frac14,\frac12)$,
and initial datum in $D(A^\alpha)$.

\begin{remark}
  Let us point put that a combination of the techniques used in 
  the previous sections allows to consider also equations with
  perturbation in mixed form, such as
  We consider SPDEs in the form
  \begin{align*}
  \d X - \Delta X\,\d t = 
  (-\Delta)^\beta F((-\Delta)^\alpha X)\,\d t + G\,\d W \quad&\text{in } (0,T)\times\OO\,,\\
  X=0\quad&\text{in } (0,T)\times\partial\OO\,,\\
  X(0)=x_0 \quad&\text{in } \OO\,,
  \end{align*}
  where either $G\in\cL(L^2(\OO), L^2(\OO))$ in the subcritical case $\beta\in[0,\frac12)$,
  or $G=A_D^\gamma$ with $\gamma>0$ in the supercritical case $\beta\in[\frac12,\frac34)$
  in dimension $d=1$.
\end{remark}

\subsection{One-dimensional Burgers equation with perturabation}
We consider SPDEs in the form
  \begin{align*}
  \d X - \Delta X\,\d t = X\nabla X\,\d t + F(X,\nabla X)\,\d t 
  + G\,\d W \quad&\text{in } (0,T)\times\OO\,,\\
  X=0\quad&\text{in } (0,T)\times\partial\OO\,,\\
  X(0)=x_0 \quad&\text{in } \OO\,,
  \end{align*}
where $\OO\subset\erre$ is a smooth bounded domain, 
$F:\erre^2\to \erre$ is measurable and bounded, 
$x_0\in H^1_0(\OO)$, and $G\in \cL^2(L^2(\OO),L^2(\OO))$.
We define the operator 
\[
  B:D(A_D^{\frac12})\to L^2(\OO)\,, \qquad
  B(\varphi)(\xi):=\varphi(\xi)\nabla\varphi(\xi) + 
  F(\varphi(\xi), \nabla\varphi(\xi))\,,
  \quad\xi\in\OO\,,\quad\varphi\in D(A_D^{\frac12})\,,
\]
which is well-defined, measurable, and strongly continuous 
since $H^1_0(\OO)\embed L^\infty(\OO)$ is continuous in dimension $1$. 
Moreover, for every $\delta\in(\frac14,\frac12)$
and for every $\delta'\in(0,\delta-\frac14)
\subset(0,\delta)\subset(0,\frac12)$,
we have that that 
$G:=A_D^{-\delta}\in \cL^2(H,D(A^{\delta'}))$
and the abstract formulation of the problem reads
\[
  \d X + A_D X\,\d t = B(X)\,\d t + G\,\d W\,, \quad X(0)=x_0\,.
\]
Hence, Theorem~\ref{thm3} applies with the choices 
$\alpha=\frac12$ and $\beta=0$, 
so that uniqueness in distribution holds for initial data in $H^1_0(\OO)$.
Note that in the case $F=0$ (i.e.~the classical Burgers equation)
also strong uniqueness is guaranteed (see e.g.~\cite{gy98})
by using direct computational techniques on the SPDE.
In this case, our approach only provides an alternative proof 
of weak uniqueness.
Existence of solutions for $F=0$ is also well-understood,  
even in higher dimensions (see for example \cite[Ex.~5.1.8]{LiuRo}).

\subsection{Cahn--Hilliard equations with perturbation}
We consider SPDEs in the form
  \begin{align*}
  \d X -\Delta (-\Delta X +F_1(X))\,\d t 
  = F_2(X, \nabla X, D^2X)\,\d t + G\,\d W \quad&\text{in } (0,T)\times\OO\,,\\
  \partial_{\bf n}X=\partial_{\bf n}\Delta X=0
  \quad&\text{in } (0,T)\times\partial\OO\,,\\
  X(0)=x_0 \quad&\text{in } \OO\,,
  \end{align*}
where $\OO\subset\erre^d$ is a smooth bounded domain with $d\in\{1,2,3\}$,
and $F_1:\erre\to\erre$ is of class $C^2$
with locally Lipschitz-continuous derivatives $F_1'$, $F_1''$.
Typically, $F_1$ is the derivative of
a double-well potential: the classical example is given 
by the quartic double-well potential
\begin{equation}
\label{eq:doubleWellPot}
  F_1(r)=\frac{\d}{\d r}\left[\frac14(r^2-1)^2\right]=r(r^2-1)\,, \quad r\in\erre\,.
\end{equation}
We stress that in our framework no assumption on 
the growth of the nonlinear term $F_1$ is required:
for example, we can cover polynomial potentials of every order,
as well as exponential-growth potentials.
The term $F_2:\erre\times\erre^d\times\erre^{d\times d}\to\erre$ 
is only continuous and linearly bounded in every argument,
representing some singular source term possibly depending 
on all derivatives up to second order, 
$x_0\in L^2(\OO)$, and $G\in \cL(L^2(\OO),L^2(\OO))$.
Noting that
\[
  -\Delta F_1(\varphi)=-\operatorname{div}[F_1'(\varphi)\nabla\varphi]
  =-F_1''(\varphi)|\nabla\varphi|^2 -F_1'(\varphi)\Delta\varphi
  \quad\forall\,\varphi\in H^2(\OO)\,,
\]
we define the operator 
\[
  B_1:H^2(\OO)\to L^2(\OO)
\]
as
\[
  B_1(\varphi)(\xi):=-F_1''(\varphi(\xi))|\nabla\varphi(\xi)|^2 
  -F_1'(\varphi(\xi))\Delta\varphi(\xi)\,,
  \quad\varphi\in H^2(\OO)\,, \quad\xi\in\OO\,,
\]
which is well-defined, measurable, locally bounded 
(by the inclusion $H^2(\OO)\embed L^\infty(\OO)$
and continuity of $F_1''$ and $F_1'$), 
and strongly continuous 
(by local Lipschitz-continuity of $F_1'$ and $F_1''$). 
Analogously, we define
\[
  B_2:H^2(\OO)\to L^2(\OO)
\]
as
\[
  B_2(\varphi)(\xi):=F_2(\varphi(\xi), \nabla \varphi(\xi), D^2\varphi(\xi))\,, 
  \quad\varphi\in L^2(\OO)\,, \quad\xi\in\OO\,,
\]
which is measurable, locally bounded, and 
strongly continuous by the hypothesis on $F_2$.
Then, setting $H:=L^2(\OO)$ and noting that 
$D((A_N^2)^{\frac12})=D(A_N)
=\{\varphi\in H^2(\OO):\partial_{\bf n}\varphi=0\}$, 
we consider the operator 
\[
  B:D(A_N)\to H\,, \qquad B(\varphi)=B_2(\varphi)-B_1(\varphi)\,, 
  \quad \varphi\in D(A_N)\,.
\]
With this notation, by setting $G:=(A_N^2)^{-\delta}=A_N^{-2\delta}$ 
for some $\delta\in[0,1)$,
the abstract formulation of the problem reads
\[
  \d X + A_N^2 X\,\d t = B(X)\,\d t + G\,\d W\,, \quad X(0)=x_0\,.
\]
In the notation of the paper, we have then $\alpha=\frac12$, $\beta=0$, 
and $A=A_N^2$.

{\sc Dimension} $d=1,2,3$.
For every $\delta\in(\frac{d}8,\frac12)$ 
and 
for every $\delta'\in(0,\delta-\frac{d}8)
\subset(0,\delta)\subset(0,\frac12)$,
by standard computations it holds that 
$G\in \cL^2(H,D(A^{\delta'}))$. 
Hence, assumption \ref{H3} is satisfied,
Theorem~\ref{thm3} applies
and uniqueness in
distribution holds with such choice of $G$ and $\beta$ for every 
initial data $x\in D(A_N)$.

\appendix

\section{Existence of weak solutions}
\label{sec:ex}
In this appendix, we provide a weak existence result for equation \eqref{eq0}
for the case where $B$ is bounded. This result follows essentially
 by a stochastic maximal regularity argument.

We emphasise that here we do not aim for optimality. 
This is just a sufficient condition for existence in a general setting. 
More refined results can be found in the literature 
according to the specific example of SPDE in consideration.
In general, if $B$ is unbounded, local existence results can be 
obtained with classical techniques in several cases, e.g.~under 
locally Lipschitz-continuity assumptions or the 
so-called structure condition between $B$ and $G$ (see \cite{AMP2023}).

Let us stress that
even if $B$ is bounded, existence of weak solution is not a direct
consequence of Girsanov theorem due to the presence of 
a coloured noise (which is indeed necessary to recover the $V_{2\alpha}$-regularity).
In the following result we give a sufficient condition for global existence 
in the case of a bounded $B$.

\begin{thm}
  \label{thm1}
  Assume \ref{H1}-\ref{H2}-\ref{H3}, and that
  there exists a Hilbert space $Z\supseteq D(A^{-\beta})$ such that
  $B$ is bounded from $D(A^\alpha)$ to $D(A^{-\beta})$
  and strongly-weakly continuous from $D(A^\alpha)$ to $Z$.
  Let $\delta_0\in[0,1]$ and $x\in D(A^{\delta_0})$.
  Then: 
  \begin{enumerate}[leftmargin=*]
  \item[(i)] if $\alpha<\xi$, 
  there exists a weak solution to \eqref{eq0} 
  in the sense of Definition~\ref{def_sol}
  such that, for all $\zeta\in[0,\xi)$,
  \begin{align*}
  X  \in C^0(\erre_+; D(A^{\delta_0\wedge\zeta}))\cap 
  L^1_{loc}(\erre_+; D(A^\alpha))\,,\qquad
  B(X) \in L^\infty(\erre_+; D(A^{-\beta}))\,, \qquad \P\text{-a.s.}
  \end{align*}
  \item[(ii)] if
  $G\in \cL^2(H, D(A^{\delta'}))$ for some $\delta'\in(0,\delta)$
  and $\alpha<(\delta_0+1)\wedge(\delta'+\frac12)$,
  there exists a weak solution to \eqref{eq0} 
  in the sense of Definition~\ref{def_sol}
  such that
  \begin{align*}
  X  \in C^0(\erre_+; D(A^{\delta_0\wedge\delta'}))\cap 
  L^1_{loc}(\erre_+; D(A^\alpha))\,,\qquad
  B(X) \in L^\infty(\erre_+; D(A^{-\beta}))\,, \qquad \P\text{-a.s.}
  \end{align*}
  \end{enumerate}
\end{thm}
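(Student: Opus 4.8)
The plan is to construct a solution by a Faedo--Galerkin scheme combined with a stochastic compactness argument, since the mere continuity of $B$ rules out any contraction-based approach. Recall that $(e_k)_k$ is the orthonormal basis of eigenvectors of $A$, and let $P_n$ be the orthogonal projection onto $H_n:=\mathrm{span}\{e_1,\dots,e_n\}$, extended to $D(A^{-\beta})$ as in Section~\ref{sec:uniq}. For each $n$ I would consider the finite-dimensional equation on $H_n$ with coefficients $P_nB$ and $P_nG$ and initial datum $P_nx$. Since on the finite-dimensional space $H_n$ the weak and strong topologies coincide, the strong--weak continuity of $B$ makes $P_nB:H_n\to H_n$ continuous and bounded, so Skorokhod's existence theorem for SDEs with continuous bounded coefficients yields a (martingale) solution $X_n$ on some stochastic basis.

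The second step is to derive a priori bounds uniform in $n$ directly from the mild formula $X_n(t)=S(t)P_nx+\int_0^tS(t-s)P_nB(X_n(s))\,\d s+\int_0^tS(t-s)P_nG\,\d W(s)$. The deterministic drift convolution is controlled by the analytic-semigroup estimate $\|A^\gamma S(r)\|_{\cL(H,H)}\le C\,r^{-\gamma-\beta}$ applied to the bounded datum $B(X_n)\in L^\infty(\erre_+;D(A^{-\beta}))$, which places it in $C^0(\erre_+;D(A^\gamma))$ for every $\gamma<1-\beta$; since $\beta<\tfrac12$ one has $1-\beta>\xi$, so this term is never the binding one. In case (i), the factorisation method together with \eqref{eq:cont_time}--\eqref{eq:cont_conv} bounds the stochastic convolution in $L^p(\Omega;C^0(\erre_+;D(A^\zeta)))$ for every $\zeta<\xi$, while the orbit $S(\cdot)P_nx$ lies in $C^0(\erre_+;D(A^{\delta_0}))$ and in $L^1_{loc}(\erre_+;D(A^\alpha))$ thanks to $\|A^{\alpha-\delta_0}S(t)\|\le C\,t^{-(\alpha-\delta_0)_+}$ with exponent $<1$. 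These combine to give uniform bounds in $L^p(\Omega;C^0(\erre_+;D(A^{\delta_0\wedge\zeta})))$ and $L^p(\Omega;L^1_{loc}(\erre_+;D(A^\alpha)))$; the loss from $C^0$ to $L^1_{loc}$ in the $D(A^\alpha)$-regularity is caused precisely by the orbit $S(\cdot)P_nx$ near $t=0$ when $\delta_0<\alpha$. For case (ii) the only change is to replace the factorisation bound by the stochastic maximal regularity estimates of \cite{vNVW} for the Hilbert--Schmidt operator $G\in\cL^2(H,D(A^{\delta'}))$, which improves the stochastic convolution to $C^0(\erre_+;D(A^{\delta'+\frac12-\theta}))$ for every $\theta>0$ and hence relaxes the constraint to $\alpha<\delta'+\tfrac12$.

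Next I would establish tightness. Supplementing the spatial bounds with a time-regularity estimate on the increments $X_n(t)-X_n(s)$ in a weaker norm (Hölder continuity in $D(A^{-m})$ from the drift and Kolmogorov continuity from the stochastic convolution), an Aubin--Lions--Simon compact-embedding argument gives tightness of the laws of $(X_n)$ on $C^0([0,T];H)\cap L^1(0,T;D(A^\alpha))$, where the strong $D(A^\alpha)$-compactness away from $t=0$ exploits the compact embedding $D(A^\zeta)\cembed D(A^\alpha)$ for $\alpha<\zeta<\xi$ and uniform integrability near $t=0$ via the $L^1$-bound. By Prokhorov and the Skorokhod (or Jakubowski) representation theorem I would pass to a new probability space carrying $(\tilde X_n,\tilde W_n)$ converging almost surely to $(\tilde X,\tilde W)$ in these topologies; in particular $\tilde X_n\to\tilde X$ strongly in $L^1(0,T;D(A^\alpha))$, so along a subsequence $\tilde X_n(t)\to\tilde X(t)$ in $D(A^\alpha)$ for almost every $t$.

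The crux, and the step I expect to be the main obstacle, is the identification of the limit in the nonlinear drift, where the strong--weak continuity of $B:D(A^\alpha)\to Z$ is essential: from $\tilde X_n(t)\to\tilde X(t)$ in $D(A^\alpha)$ one gets $B(\tilde X_n(t))\wto B(\tilde X(t))$ in $Z$, and the uniform bound of $B$ in $D(A^{-\beta})$ upgrades this to weak convergence in $D(A^{-\beta})$; combined with $P_n\to I$ strongly and dominated convergence, this passes $\int_0^tS(t-s)P_nB(\tilde X_n)\,\d s$ to $\int_0^tS(t-s)B(\tilde X)\,\d s$. A standard martingale-problem identification shows the stochastic convolution converges to $\int_0^tS(t-s)G\,\d\tilde W$, so $\tilde X$ satisfies the mild equation \eqref{eq_sol}, and the regularity in (i)--(ii) is inherited from the uniform bounds by weak lower semicontinuity of the norms. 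The delicate points are to arrange the tightness in the correct $L^1(0,T;D(A^\alpha))$-topology in which $B$ acts continuously (rather than merely in $C^0([0,T];H)$), to handle the $t=0$ singularity consistently, and to make the weak-$Z$ limit of $B$ compatible with the $D(A^{-\beta})$-valued drift integral.
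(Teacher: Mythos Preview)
Your proposal is correct and follows essentially the same route as the paper: Galerkin approximation on $H_n$, solvability of the finite-dimensional problem via the boundedness and continuity of $P_nB$, uniform a priori bounds read off directly from the mild formula (analytic-semigroup smoothing for the drift convolution, factorisation/\cite{vNVW} for the stochastic convolution, and the integrable singularity of $S(\cdot)x_n$ near $t=0$), and stochastic compactness with the strong--weak continuity of $B$ to identify the limit. The only tactical differences are that the paper invokes Girsanov (rather than Skorokhod) for the finite-dimensional step and, instead of adding a time-regularity ingredient for Aubin--Lions, simply upgrades the spatial bound to $L^q(0,T;V_{2\bar s})$ for some $\bar s\in(\alpha,\zeta)$ and $q>1$ and uses the compact embedding $V_{2\bar s}\cembed V_{2\alpha}$ to obtain tightness in $L^q(0,T;V_{2\alpha})\cap C^0([0,T];V_{2(\delta_0\wedge\zeta)})$.
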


\begin{proof}
We recall that $(e_k)_{k\in\enne}$ is a complete orthonormal system of $H$
made of eigenvectors of $A$, and $(\lambda_k)_{k\in\enne}$ are the corresponding 
eigenvalues. We define $H_n:=\operatorname{span}\{e_1,\ldots,e_n\}$
and we introduce $P_n:H\to H_n$ 
as the orthogonal projection on $H_n$, for all $n\in\enne$. 
Note that $P_n$ can be extended to a linear continuous operator 
$P_n:D(A^{-\beta})\to H_n$ as follows
\[
  P_nv:=\sum_{k=1}^n\langle v, e_k\rangle e_k\,, \quad v\in D(A^{-\beta})\,.
\]
We consider the approximated problem 
\beq
  \label{eq_app}
   \d X_n + AX_n\,\d t = B_n(X_n)\,\d t +  G_n\,\d W_n\,, \qquad X(0)=x_n\,,
\eeq
where 
\begin{equation*}
B_n:=P_n\circ B\circ P_n:H\to H_n\,, \qquad G_n:=P_n\circ G\in\cL^2(H,H_n)\,,
\end{equation*}
$W_n$ is an $n$-dimensional standard Brownian motion, 
and $x_n:=P_nx\in H_n$.
If we look for a solution $X_n$ in the form 
\[
  X_n(t)=\sum_{k=1}^na_k(t)e_k\,, \quad t\geq0\,,
\]
plugging it into equation \eqref{eq_app} and testing by arbitrary elements in 
$\{e_1,\ldots,e_n\}$
yields the following system of SDEs for 
the coefficients $\{a_k\}_{k=1}^n$:
\[
  \d a_j + \lambda_ja_j\,\d t = \left(B_n\left(\sum_{k=1}^na_ke_k\right), e_j\right) 
  + (G_n\,\d W_n, e_j)\,, \quad a_j(0)=(x_n,e_j)\,,\qquad j=1,\ldots,n\,.
\]
Since in $\erre^n$ all norms are equivalent and the restriction $B_n:H_n\to H_n$ 
is continuous and bounded, by classical results based on Girsanov arguments, 
it follows that \eqref{eq_app} admits a weak solution 
$(\Omega, \cF, (\cF_t)_{t\geq0}, \P, X_n, W_n)$ with
\[
  X_n\in L^2(\Omega; C^0(\erre_+; H_n))\,.
\]
From \eqref{eq_app} it follows that 
\[
  X_n(t) = S(t)x_n + \int_0^tS(t-s)B_n(X_n(s))\,\d s + 
  \int_0^tS(t-s)G_n\,\d W_n(s)\,, \quad\forall\,t \geq0\,, \quad\P\text{-a.s.}
\]
Let us now show uniform estimates on $(X_n)_n$, in the two cases
(i) and (ii).

\smallskip
Case (i). First of all, for $\zeta\in[0,\xi)$,
using the regularisation properties of $S$
(see \cite[Prop.~2.2.9]{lunardi}), we have 
\begin{align*}
  &\norm{X_n(t)}_{2(\delta_0\wedge\zeta)} \\
  &\leq \norm{S(t)x_n}_{2(\delta_0\wedge\zeta)}
  +\int_0^t\norm{S(t-s)B_n(X_n(s))}_{2(\delta_0\wedge\zeta)}\,\d s
  +\norm{\int_0^tS(t-s)G_n\,\d W_n(s)}_{2(\delta_0\wedge\zeta)} \\
  &\leq \norm{x_n}_{2\delta_0}+\int_0^t
  \frac1{(t-s)^{(\delta_0\wedge\zeta)+\beta}}\norm{B_n(X_n(s))}_{-2\beta}\,\d s
  +\norm{\int_0^tS(t-s)G_n\,\d W_n(s)}_{2\zeta}\,.
\end{align*}
Hence, taking supremum in time and expectations, 
by \cite[Thm~5.15]{dapratozab} we deduce, for all $T>0$, that 
\begin{align*}
  &\E\sup_{t\in[0,T]}\norm{X_n(t)}_{2(\delta_0\wedge\zeta)} \\
  &\leq \norm{x}_{2\delta_0}+\sup_{y\in V_{2\alpha}}\norm{B(y)}_{-2\beta}
  \frac{T^{1-(\delta_0\wedge\zeta)-\beta}}{1-(\delta_0\wedge\zeta)-\beta}
  +C\E\left(\int_0^Tt^{-2\xi}\|S(t)G\|^2_{\cL^2(H,H)}\,\d t\right)^{1/2}\,,
\end{align*}
where we have used again that $S$ contracts in $V_{2\delta_0}$ and 
$V_{2\zeta}$, that 
$(\delta_0\wedge\zeta)+\beta\in[0,1)$, and the analyticity of the semigroup $S$.
This shows that there exists a constant $C_T>0$ independent of $n$ such that 
\beq
  \label{est1}
 \norm{X_n}_{L^2(\Omega; C^0([0,T]; V_{2(\delta_0\wedge\zeta)}))}\leq C_T\,, \quad\forall\,T>0\,.	
\eeq
Furthermore, deterministic maximal $L^2$-regularity 
(see \cite[Thms.~3.3]{vNVW}) and the 
analyticity of the semigroup $S$
(see \cite[Thm~5.15]{dapratozab}) ensure also that 
\[
  \norm{\int_0^\cdot S(\cdot-s)B_n(X_n(s))\,\d s}_{
  L^2(\Omega; L^2(0,T; V_{2-2\beta}))} \leq C_T \,,
  \quad\forall\,T>0
\]
and
\[
  \norm{\int_0^\cdot S(\cdot-s)G_n\,\d W_n(s)}_{
  L^2(\Omega; C^0([0,T]; V_{2\zeta}))} \leq C_T \,, 
  \quad\forall\,T>0\,.
\]
Moreover, since $\alpha<\xi$, we can fix $\zeta\in(\alpha,\xi)$:
with such a choice, let $\bar s \in(\alpha, \zeta)$ and note that 
by the regularising properties of $S$
one has
\[
  \|S(t)x_n\|_{2\bar s}\leq C\frac1{t^{\bar s-\delta_0}}\|x\|_{2\delta_0} \quad\forall\,t>0\,.
\]
Recalling that $\zeta< 1-\beta$ by assumptions \ref{H2}-\ref{H3},
as well as $\bar s-\delta_0<1$,
we infer that there exists $q>1$ such that 
\beq
  \label{est2}
 \norm{X_n}_{L^2(\Omega; L^q(0,T; V_{2\bar s}))}\leq C_T\,, \quad\forall\,T>0\,,
\eeq
In particular, since the embedding $V_{2\bar s}\embed V_{2\alpha}$ is compact, 
it follows that the laws of $(X_n)_n$ are tight on the space
\[
  L^q(0,T; V_{2\alpha})\cap C^0([0,T]; V_{2(\delta_0\wedge\zeta)})\,.
\]
Exploiting the fact that $B:V_{2\alpha}\to Z$ is strongly-weakly continuous, 
by letting $n\to\infty$ in \eqref{eq_app} yields the existence of a weak 
solution to \eqref{eq0} by classical stochastic compactness techniques. 

\smallskip
Case (ii). The proof is similar to the previous case, the only difference being 
the treatment of the stochastic convolution.
Indeed, one can estimate the terms directly 
in the space $D(A^{\delta_0\wedge\delta'})$ instead of $D(A^{\delta_0\wedge\zeta})$ as
\begin{align*}
  &\E\sup_{t\in[0,T]}\norm{X_n(t)}_{2(\delta_0\wedge\delta')} \\
  &\leq \norm{x}_{2\delta_0}+\sup_{y\in V_{2\alpha}}\norm{B(y)}_{-2\beta}
  \frac{T^{1-(\delta_0\wedge\delta')-\beta}}{1-(\delta_0\wedge\delta')-\beta}
  +\E\left(\int_0^T\norm{G}^2_{\cL^2(H, V_{2\delta'})}\,\d s\right)^{1/2}\,,
\end{align*}
where we have used that $S$ contracts in $V_{2\delta_0}$ and 
$V_{2\delta'}$, maximal regularity, 
the Burkholder-Davis-Gundy inequality, and that 
$(\delta_0\wedge\delta')+\beta\in[0,1)$.
Moreover, by 
\cite[Thm.~6.12, Prop.~6.18]{dapratozab}) one has also that
\[
  \norm{\int_0^\cdot S(\cdot-s)G_n\,\d W_n(s)}_{
  L^2(\Omega; L^2(0,T; V_{2\delta'+1}))} \leq C_T \,, 
  \quad\forall\,T>0\,.
\]
Moreover, since $\alpha<(\delta_0+1)\wedge(\delta'+\frac12)$, we can fix 
$\bar s \in(\alpha, (\delta_0+1)\wedge(\delta'+\frac12))$ and note that 
by the regularising properties of $S$
one has
\[
  \|S(t)x_n\|_{2\bar s}\leq C\frac1{t^{\bar s-\delta_0}}\|x\|_{2\delta_0} \quad\forall\,t>0\,.
\]
Recalling that $2\bar s<2\delta'+1< 2-2\beta$
by assumption,
as well as $\bar s-\delta_0<1$,
we infer that there exists $q>1$ such that 
\beq
  \label{est2_bis}
 \norm{X_n}_{L^2(\Omega; L^q(0,T; V_{2\bar s}))}\leq C_T\,, \quad\forall\,T>0\,,
\eeq
In particular, since the embedding $V_{2\bar s}\embed V_{2\alpha}$ is compact, 
it follows that the laws of $(X_n)_n$ are tight on the space
\[
  L^q(0,T; V_{2\alpha})\cap C^0([0,T]; V_{2(\delta_0\wedge\delta')})\,.
\]
The proof follows then as in the previous case.
\end{proof}

\begin{remark}
If $B$ is not bounded, one can obtain existence of local solutions
by classical stopping arguments. Instead, existence of global solutions 
in the unbounded case requires appropriate conditions on $B$,
according to the specific equation in consideration.
Typically, these can be assumptions of monotonicity type or 
assumption on the growth: for example, one can suppose that
there exists a constant $C_B>0$ such that
  \[
  \|B(x)\|_{-2\beta}\leq C_B\left(1 + \|x\|_{2\alpha}\right) \quad\forall\,x\in D(A^\alpha)\,.
  \]
In this framework,
the existence proof of Case (i) can be adapted with the additional condition 
that $\alpha\leq\delta_0$: indeed, one can estimate
\begin{align*}
  \int_0^t\frac1{(t-s)^{(\delta_0\wedge\zeta)+\beta}}\norm{B_n(X_n(s))}_{-2\beta}\,\d s
  \leq C_B\int_0^t\frac1{(t-s)^{(\delta_0\wedge\zeta)+\beta}}
  \left(1+\norm{X_n(s)}_{2\alpha}\right)\,\d s
\end{align*}
and conclude by using the Gronwall's 
lemma together with the relation $\alpha\leq\delta_0\wedge\zeta$.
Analogously, the existence proof of Case (ii)
can be adapted with the additional condition 
that $\alpha\leq\delta_0\wedge\delta'$:
indeed, one can write
\begin{align*}
  \int_0^t\frac1{(t-s)^{(\delta_0\wedge\delta')+\beta}}\norm{B_n(X_n(s))}_{-2\beta}\,\d s
  \leq C_B\int_0^t\frac1{(t-s)^{(\delta_0\wedge\delta')+\beta}}
  \left(1+\norm{X_n(s)}_{2\alpha}\right)\,\d s
\end{align*}
and conclude by using the Gronwall's 
lemma together with the relation $\alpha\leq\delta_0\wedge\delta'$.
\end{remark}
\endappendix

\section*{Acknowledgement}
F.B.\ is grateful to the Royal Society for financial support through Prof.~M.~Hairer's research professorship grant RP\textbackslash R1\textbackslash 191065. 
C.O. and L.S. are members of Gruppo Nazionale 
per l'Analisi Matematica, la Probabilit\`a 
e le loro Applicazioni (GNAMPA), 
Istituto Nazionale di Alta Matematica (INdAM), 
and gratefully acknowledge financial support 
through the project CUP\_E55F22000270001.
The present research is part of the activities of 
``Dipartimento di Eccellenza 2023-2027'' of Politecnico di Milano.

\small

\def\cprime{$'$}


\end{document}